\newcommand{\remind}[1]{{\bf ** #1 **}}
\def\lup{\rightharpoonup}
\def\ge{\geqslant}
\def\le{\leqslant}
\def\a{\alpha}
\def\b{\beta}
\def\G{\Gamma}
\def\d{\delta}
\def\e{\epsilon}
\def\o{\omega}
\def\s{\sigma}
\def\t{\tau}
\def\k{\kappa}
\def\l{\lambda}
\def\i{^{-1}}
\def\<{\langle}
\def\>{\rangle}
\def\bq{\mathbf q}
\def\bH{\mathbf H}
\newcommand{\spade}{{\spadesuit}}
\newcommand{\bG}{\mathbf G}
\newcommand{\bM}{\mathbf M}
\newcommand{\bJ}{\mathbf J}
\newcommand{\BA}{\ensuremath{\mathbb {A}}\xspace}
\newcommand{\BC}{\ensuremath{\mathbb {C}}\xspace}
\newcommand{\BF}{\ensuremath{\mathbb {F}}\xspace}
\newcommand{{\BG}}{\ensuremath{\mathbb {G}}\xspace}
\newcommand{{\BK}}{\ensuremath{\mathbb {K}}\xspace}
\newcommand{\BN}{\ensuremath{\mathbb {N}}\xspace}
\newcommand{\BQ}{\ensuremath{\mathbb {Q}}\xspace}
\newcommand{\BR}{\ensuremath{\mathbb {R}}\xspace}
\newcommand{\BS}{\ensuremath{\mathbb {S}}\xspace}
\newcommand{\BZ}{\ensuremath{\mathbb {Z}}\xspace}
\newcommand{\CA}{\ensuremath{\mathcal {A}}\xspace}
\newcommand{\CI}{\ensuremath{\mathcal {I}}\xspace}
\newcommand{\CO}{\ensuremath{\mathcal {O}}\xspace}
\newcommand{\CT}{\ensuremath{\mathcal {T}}\xspace}
\newcommand{\Ad}{{\mathrm{Ad}}}
\newcommand{\ad}{{\mathrm{ad}}}
\newcommand{\de}{{\mathrm{def}}}
\DeclareMathOperator{\Gal}{Gal}
\newcommand{\GL}{\mathrm{GL}}
\newcommand{\indec}{\mathrm{indec}}
\newcommand{\irr}{\mathrm{irr}}
\newcommand{\leng}{\mathrm{length}}
\newcommand{\id}{\ensuremath{\mathrm{id}}\xspace}
\DeclareMathOperator{\rank}{rank}
\def\tW{\tilde W}
\def\tS{\tilde \BS}
\def\kk{\mathbf k}
\DeclareMathOperator{\supp}{supp}
\def\leng{{\rm length}}
\newtheorem{theorem}{Theorem}
\newtheorem{proposition}[theorem]{Proposition}
\newtheorem{lemma}[theorem]{Lemma}
\newtheorem{corollary}[theorem]{Corollary}
\theoremstyle{definition}
\newtheorem{remark}[theorem]{Remark}
\numberwithin{equation}{section}
\numberwithin{theorem}{section}
\renewcommand{\to}{%
   \ifbool{@display}{\longrightarrow}{\rightarrow}%
   }
\let\shortmapsto\mapsto
\renewcommand{\mapsto}{%
   \ifbool{@display}{\longmapsto}{\shortmapsto}%
   }
\newlength{\olen}
\newlength{\ulen}
\newlength{\xlen}
\newcommand{\xra}[2][]{%
   \ifbool{@display}%
      {\settowidth{\olen}{$\overset{#2}{\longrightarrow}$}%
       \settowidth{\ulen}{$\underset{#1}{\longrightarrow}$}%
       \settowidth{\xlen}{$\xrightarrow[#1]{#2}$}%
       \ifdimgreater{\olen}{\xlen}%
          {\underset{#1}{\overset{#2}{\longrightarrow}}}%
          {\ifdimgreater{\ulen}{\xlen}%
             {\underset{#1}{\overset{#2}{\longrightarrow}}}
             {\xrightarrow[#1]{#2}}}}%
      {\xrightarrow[#1]{#2}}
   }
\newcommand{\xyra}[2][]{%
   \settowidth{\xlen}{$\xrightarrow[#1]{#2}$}%
   \ifbool{@display}%
      {\settowidth{\olen}{$\overset{#2}{\longrightarrow}$}%
       \settowidth{\ulen}{$\underset{#1}{\longrightarrow}$}%
       \ifdimgreater{\olen}{\xlen}%
          {\mathrel{\xymatrix@M=.12ex@C=3.2ex{\ar[r]^-{#2}_-{#1} &}}}%
          {\ifdimgreater{\ulen}{\xlen}%
             {\mathrel{\xymatrix@M=.12ex@C=3.2ex{\ar[r]^-{#2}_-{#1} &}}}
             {\mathrel{\xymatrix@M=.12ex@C=\the\xlen{\ar[r]^-{#2}_-{#1} &}}}}}%
      {\mathrel{\xymatrix@M=.12ex@C=\the\xlen{\ar[r]^-{#2}_-{#1} &}}}%
   }
\newcommand{\xla}[2][]{%
   \ifbool{@display}%
      {\settowidth{\olen}{$\overset{#2}{\longleftarrow}$}%
       \settowidth{\ulen}{$\underset{#1}{\longleftarrow}$}%
       \settowidth{\xlen}{$\xleftarrow[#1]{#2}$}%
       \ifdimgreater{\olen}{\xlen}%
          {\underset{#1}{\overset{#2}{\longleftarrow}}}%
          {\ifdimgreater{\ulen}{\xlen}%
             {\underset{#1}{\overset{#2}{\longleftarrow}}}
             {\xleftarrow[#1]{#2}}}}%
      {\xleftarrow[#1]{#2}}
   }
\newcommand{\isoarrow}{%
   \ifbool{@display}{\overset{\sim}{\longrightarrow}}{\xrightarrow\sim}%
   }
\begin{document}

\title[Affine Deligne--Lusztig varieties with finite Coxeter parts]{Affine Deligne--Lusztig varieties with finite Coxeter parts}
\author[Xuhua He]{Xuhua He}
\address{The Institute of Mathematical Sciences and Department of Mathematics, The Chinese University of Hong Kong, Shatin, N.T., Hong Kong SAR, China}
\email{xuhuahe@math.cuhk.edu.hk}

\author[Sian Nie]{Sian Nie}
\address{Academy of Mathematics and Systems Science, Chinese Academy of Sciences, Beijing 100190, China, and, School of Mathematical Sciences, University of Chinese Academy of Sciences, Chinese Academy of Sciences, Beijing 100049, China}
\email{niesian@amss.ac.cn}

\author[Qingchao Yu]{Qingchao Yu}
\address{Beijing International Center for Mathematical Research, Beijing University, Haidian, Beijing, China}
\email{yuqingchao@bicmr.pku.edu.cn}
\thanks{}

\keywords{Affine Deligne--Lusztig varieties, Coxeter elements}
\subjclass[2010]{11G25, 20G25}

\begin{abstract}
In this paper, we study  affine Deligne--Lusztig varieties $X_w(b)$ when the finite part of the element $w$ in the Iwahori--Weyl group is a partial $\s$-Coxeter element. We show that such $w$ is a cordial element and $X_w(b) \neq \emptyset$ if and only if $b$ satisfies a certain Hodge--Newton indecomposability condition. The main result of this paper is that for such $w$ and $b$,  $X_w(b)$ has a simple geometric structure: the $\s$-centralizer of $b$ acts transitively on the set of irreducible components of $X_w(b)$; and each irreducible component is an iterated fibration over a classical Deligne--Lusztig variety of Coxeter type, and the iterated fibers are either $\BA^1$ or $\BG_m$. 
\end{abstract}

\maketitle

\tableofcontents

\section{Introduction}
\subsection{Classical/affine Deligne--Lusztig varieties}
The classical Deligne--Lusztig varieties were introduced by Deligne and Lusztig in \cite{DL}. They play a crucial role in the representation theory of finite reductive groups. They are defined for a connected reductive group $\bG$ over a finite field $\BF_q$. For any element $w$ in the (finite) Weyl group of $\bG(\bar \BF_q)$, the corresponding Deligne--Lusztig variety $X_w$ is a certain locally closed subvariety of the flag variety of $\bG(\bar \BF_q)$, and it admits a natural action of the finite reductive group $\bG(\BF_q)$. It is known that

\smallskip
(a) {\it The classical Deligne--Lusztig variety $X_w$ is smooth and of dimension equal to the length of $w$, and the finite reductive group $\bG(\BF_q)$ acts transitively on the set of irreducible components of $X_w$.} \\
\smallskip

Affine Deligne--Lusztig varieties were introduced by Rapoport in \cite{Ra} as the affine analog of  classical Deligne--Lusztig varieties. They serve as  group-theoretic models for Shimura varieties and shtukas. They are defined for a connected reductive group $\bG$ over a non-archimedean local field $F$. Let $\breve F$ be the completion of the maximal unramified extension of $F$. For any element $w$ in the Iwahori--Weyl group $\tW$ of $\bG(\breve F)$ and any element $b \in \bG(\breve F)$, the corresponding affine Deligne--Lusztig variety $X_w(b)$ is a certain locally closed subscheme of finite type in the affine flag variety of $\bG(\breve F)$, and it admits a natural action of the $\s$-centralizer $\bJ_b(F)$ of $b$. Unlike  classical Deligne--Lusztig varieties, which have the nice geometric structure described in (a), the geometric structures of  affine Deligne--Lusztig varieties are very complicated: 
\begin{itemize}
    \item For many pairs $(w, b)$, $X_w(b)$ are empty.
    \item Even if $X_w(b)$ is nonempty, it is not equi-dimensional in general, and it is very difficult to determine its dimension.
    \item In general, the group $\bJ_b(F)$ does not acts transitively on the set of irreducible components of $X_w(b)$, and very little is known about this  $\bJ_b(F)$-action. 
    \item The irreducible components of $X_w(b)$, in general, have a very complicated geometric structure. 
\end{itemize}

We refer to the survey article \cite{He-ICM} and the paper \cite{He-Pi} for recent developments regarding the nonemptiness pattern and the dimension formula for $X_w(b)$. 

\subsection{Main result} 
In \cite{MV}, Mili\'cevi\'c and Viehmann introduced the notion of cordial elements. The geometry of the affine Deligne--Lusztig varieties associated with  cordial elements is ``well-behaved'' in the following sense. If $w$ is a cordial element, then the elements $b$ with $X_w(b) \neq \emptyset$ form a saturated set in the sense of \cite[Theorem 1.1]{MV}, and for any such $b$, there is a simple dimension formula for $X_w(b)$. Moreover, $X_w(b)$ is equi-dimensional. Schremmer gave a classification of the cordial elements  in his recent work \cite{Sch}.

However, even for a cordial element $w$, very little is known about the $\bJ_b(F)$-orbits on the set of irreducible components of $X_w(b)$ or about the geometric structure of the irreducible components of $X_w(b)$. 

On the other hand, by \cite{He14}, there is a family of elements in the Iwahori--Weyl group whose associated affine Deligne--Lusztig varieties have very simple geometric structures. We denote by $\s$ the Frobenius morphism on $\bG(\breve F)$ and the induced group automorphism on the Iwahori--Weyl group $\tW$. Suppose that $w$ is a minimal length element in its $\s$-conjugacy class of $\tW$; then there exists a unique $\s$-conjugacy class $[b]$ with $X_w(b) \neq \emptyset$. In this case, there exist a parahoric subgroup $P$ of $\bJ_b(F)$ and a classical Deligne--Lusztig variety $X$ (associated with the reductive quotient of $P$) such that $X_w(b) \cong \bJ_b(F) \times^{P} X$. Such a simple geometric structure has been used in the study of certain Shimura varieties with simple geometric structure (see \cite{GH}, \cite{GHN-full}, and \cite{GHN2}). However, these minimal length elements $w$ form only a tiny fraction of the whole Iwahori--Weyl group, and such a simple geometric structure only occurs in a few cases. 

In this paper, we will focus on another family of elements in $\tW$. For any $w \in \tW$, we define its finite part to be the image of $w$ under the map $\eta_\s: \tW \to W$ (see \S\ref{sec:cordial}). The main result of this paper is that if the finite part of $w$ is a $\s$-Coxeter element of $W$, then the associated affine Deligne--Lusztig variety $X_w(b)$ for any $b$ has a simple geometric structure. 

\begin{theorem}[See Theorem~\ref{main}]
Let $w \in \tW$ such that $\eta_\s(w)$ is a $\s$-Coxeter element of $W$. Then
\begin{enumerate}
\item  $w$ is a cordial element; 
\item  $X_w(b) \neq \emptyset$ if and only if $b$ satisfies a certain Hodge--Newton indecomposability condition; 
\item  For any $b$ with $X_w(b) \neq \emptyset$, there exists a parahoric subgroup $P$ of $\bJ_b(F)$ and a classical Deligne--Lusztig variety $X$ of Coxeter type, and an iterated fibration $Y \to X$ whose iterated fibers are either $\BA^1$ or $\BG_m$ such that $X_w(b) \cong \bJ_b(F) \times^P Y$. 
\end{enumerate}
\end{theorem}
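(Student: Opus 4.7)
The plan is to prove all three statements simultaneously by induction on $\ell(w)$, driven by the Deligne--Lusztig reduction in the Iwahori--Weyl group. Recall that for any simple affine reflection $s$ with $sw<w$, the variety $X_w(b)$ decomposes into two locally closed pieces, each a locally trivial $\BA^1$- or $\BG_m$-bundle over one of $X_{sw\s(s)}(b)$ and $X_{sw}(b)$; the precise shape of the decomposition depends on the length comparisons among $w$, $sw$, and $sw\s(s)$. The objective is to find, at each inductive step, a reflection $s$ such that exactly one of these two pieces is nonempty, yielding a single $\BA^1$- or $\BG_m$-fibration $X_w(b) \to X_{w'}(b)$ onto a shorter element $w'$ whose finite part $\eta_\s(w')$ remains a partial $\s$-Coxeter element, possibly for a smaller Weyl group after passage to a Levi.

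The combinatorics of partial $\s$-Coxeter elements are the key tool. Since $\eta_\s(w)$ uses each simple reflection in its support exactly once in a reduced expression, the length comparisons that govern the Deligne--Lusztig reduction are tightly constrained. I would choose $s$ corresponding to a simple root in the support of $\eta_\s(w)$ (likely the first or last letter of a reduced expression, dictated by $\s$), and argue that the Coxeter hypothesis together with the Hodge--Newton indecomposability of $b$ forces exactly one side of the Deligne--Lusztig decomposition to be empty. Iterating until no further reduction is possible lands one at an element $w_0$ of minimal length in its $\s$-conjugacy class, for which the main theorem of \cite{He14} yields an isomorphism $X_{w_0}(b) \cong \bJ_b(F) \times^P X$ with $X$ a classical Deligne--Lusztig variety of Coxeter type inside the reductive quotient of a parahoric $P$ of $\bJ_b(F)$. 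Assembling the reduction steps above this terminal variety produces the iterated $\BA^1$/$\BG_m$-fibration $Y \to X$ required in part (3); the Hodge--Newton indecomposability of $b$ is exactly the hypothesis needed for $X_{w_0}(b)$ to be nonempty, giving part (2); and counting $\BA^1$/$\BG_m$ fibers along the reduction yields an explicit formula of the form $\dim X_w(b) = \ell(w) - \pair{\nu_b, 2\rho} + \tfrac12\de(b)$ together with equidimensionality, hence the cordiality of $w$ in part (1).

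The main obstacle will be the \emph{single-piece} step: verifying at each reduction that exactly one of the two constituent subvarieties in the Deligne--Lusztig decomposition is nonempty, so that no branching occurs and the iterated structure remains a pure iterated fibration. I expect this to follow from a finer analysis that pairs the length changes $\ell(w) \to \ell(sw\s(s))$ against the Newton polygon of $b$ and its Hodge--Newton structure, with the Coxeter hypothesis providing the combinatorial rigidity needed to rule out the undesired piece. A secondary hurdle is the bookkeeping: after each reduction one must identify the correct ambient Levi for the remaining partial Coxeter element and verify that the Hodge--Newton indecomposability propagates to the new situation; this is likely to require passing to smaller root subsystems and arranging that $b$ becomes basic or suitably central in the terminal Levi, so that the minimal-length theorem of \cite{He14} applies cleanly and so that the parahoric $P$ and the final Coxeter-type Deligne--Lusztig variety $X$ can be read off uniformly from the reduction sequence.
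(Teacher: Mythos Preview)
Your proposal has the right skeleton --- Deligne--Lusztig reduction, minimal-length endpoints via \cite{He14}, assembly into an iterated fibration --- but the step you flag as ``the main obstacle'' is a genuine gap, and the paper's own argument shows why.

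The ``single-piece'' assertion (that for a fixed $[b]$, at each reduction step exactly one of $X_{sw}(b)$ and $X_{sw\s(s)}(b)$ is nonempty) is \emph{not} a local combinatorial fact that can be read off from length comparisons and the Newton polygon of $b$. In the paper's language this is the ``multiplicity one'' statement: in any reduction tree of $w$, there is a \emph{unique} path $\underline p$ with $[b]_{\underline p}=[b]$. The paper proves this only indirectly. First, the Chen--Zhu conjecture on stabilizers of top-dimensional components (Theorem~\ref{Chen--Zhu}) is used to show that every endpoint is a $\s$-Coxeter element attached to a \emph{very special} parahoric, and that $\ell_I(\underline p)$ and $\ell_{II}(\underline p)$ are determined by $[b]$ alone. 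Then a global counting argument via class polynomials (Proposition~\ref{prop:id}) converts multiplicity one into the single combinatorial identity $(\spade'')$ of \S\ref{sec:red}, which is established by reducing to split, $\breve F$-simple, simply laced adjoint groups and fundamental coweights, invoking Chen--Zhu again for minuscule $\mu$, an inductive argument for type $D$, and \emph{computer verification} for types $E_6$, $E_7$, $E_8$. No direct, step-by-step mechanism of the kind you sketch (``pairing length changes against the Newton polygon'') is known; the paper explicitly says this is ``the most difficult part'' and resorts to the indirect route.

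A second issue: your plan derives cordiality (part~(1)) as a byproduct of the fibration count. But you need cordiality and the description of $B(\bG)_w$ \emph{before} you know the reduction tree has a unique path per $[b]$ --- otherwise you cannot even bound the number of paths from below. The paper accordingly proves parts~(1) and~(2) first and independently (\S\ref{sec:cor}), using the structure of $B(\bG,\mu)_{\indec}$ (Proposition~\ref{prop:max-b}, Corollary~\ref{cor:max-b}) together with the virtual-dimension bound and the saturation theorem for cordial elements, with no appeal to reduction trees. Only then is the machinery of \S\S\ref{sec:end}--\ref{sec:red} brought in for part~(3). Your inductive scheme conflates these two phases and would become circular.
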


We refer to \S\ref{sec:notation} for the precise statement and the definition of the notions used here. The special case of Part~(3) where $\bG = \GL_n$, $b$ is basic and $w$ is certain element with finite Coxeter part was studied by Shimada \cite{Shi}. 

\subsection{Strategy}
One major tool used in the study of affine Deligne--Lusztig varieties is the Deligne--Lusztig reduction method \cite{He14}. Based on the Deligne--Lusztig reduction, a close relationship between  affine Deligne--Lusztig varieties and the class polynomials of  affine Hecke algebras was established in \cite{He14}. One remarkable property of these class polynomials is that they are polynomials in $(\bq-1)$ with non-negative integral coefficients. With each element $w \in \tW$, we may associate a reduction tree, which encodes the information on the reduction steps and determines the class polynomials associated with $w$. However, obtaining an explicit description of the reduction trees is quite challenging. 

Another key ingredient in this paper is the Chen--Zhu conjecture. This conjecture predicts the $\bJ_b(F)$-action on the top-dimensional irreducible components of  affine Deligne--Lusztig varieties in the affine Grassmannian. This conjecture was verified recently in \cite{Nie}, \cite{ZZ}, and \cite{HZZ}. Part of the Chen--Zhu conjecture predicts the isotropy group for the $\bJ_b(F)$-action, which gives some information about the end points of the reduction trees. 

Combining the above two ingredients, in \S\ref{sec:end}, we show that the end points of each reduction tree for $w$ with finite Coxeter part must be certain $\s$-Coxeter elements, and
each path, which corresponds to a $\s$-conjugacy class $[b]$ of $b$, in a reduction tree provides a $\bJ_b(F)$-orbit of irreducible components of $X_w(b)$. It remains to show that for any $b$, there is at most one path in the reduction tree that corresponds to $[b]$ (i.e., the ``multiplicity one'' result). For the (unique) maximal $\s$-conjugacy class $[b]$ with $X_w(b) \neq \emptyset$, this ``multiplicity one'' result is obvious. For the basic $\s$-conjugacy class $[b]$, one may deduce the ``multiplicity one'' result by showing that any path corresponding to $[b]$ is of a unique type. See \S\ref{extreme-case}.

It is more challenging to determine the numbers of reduction paths for other $\s$-conjugacy classes in a reduction tree. In this paper, we use the following indirect approach to establish the ``multiplicity one'' result. We first interpret the class polynomials as the number of  rational points for certain admissible subsets. We then use the positivity property of the class polynomials to show that the ``multiplicity one'' result for all $b$ is equivalent to the following single combinatorial identity: \[\tag{*} \sum_{[b] \in B(\bG, \mu)_{\indec}} (\bq-1)^? \bq^{-??}=1.\] Here $B(\bG, \mu)_{\indec}$ is the set of all Hodge--Newton indecomposable $\s$-conjugacy classes, and the powers ``?'' and ``??'' are certain non-negative integers determined by $w$ and $b$. We refer to \S\ref{sec:HN} and \S\ref{sec:5.3} for the precise definitions.

Verifying the combinatorial identity (*) is the most technical part of this paper and is done in \S\ref{sec:red}. We first establish natural bijections between the sets $B(\bG, \mu)_{\indec}$ for various pairs $(\bG, \mu)$, which is of independent interest. In combination with other techniques, we reduce the verification of (*) to the case for simply laced, $\breve F$-simple and split groups and for fundamental coweights. 

For classical groups, we may further reduce to the case where $\mu$ is minuscule. In this case, the ``multiplicity one'' result follows from the Chen--Zhu conjecture. For exceptional groups, we use a computer to verify (*). The most complicated case for the exceptional group is  $(E_8, \omega^\vee_4)$. In this case, the left-hand side of (*) involves a summation of $729$ terms. It is also worth mentioning that in the case $(A_{n-1}, \omega^\vee_i)$, we may write the identity (*) explicitly as 
\begin{multline*}
\sum_{\substack{k \ge 1, 1>\frac{a_1}{b_1}>\cdots>\frac{a_k}{b_k}>0; \\ a_i+\cdots+a_k=i, b_1+\cdots+b_k=n}} (\bq-1)^{k-1} \bq^{k-1-\frac{\sum_{1 \le l_1<l_2\le k}(a_{l_1} b_{l_2}-a_{l_2} b_{l_1})+\sum_{1 \le l \le k} \gcd(a_l, b_l)}{2}}\\
=\bq^{\frac{i(n-i)-n}{2}}.
\end{multline*}
We do not know if there is a purely combinatorial proof of this identity.

\smallskip

\noindent {\bf Acknowledgments: } XH is partially supported by Hong Kong RGC Grant 14300220, by funds connected with the Choh-Ming Chair at CUHK, and by the Xplorer prize.

\section{Preliminaries}\label{sec:notation}

\subsection{Reductive groups} Let $F$ be a non-archimedean local field with residue field $\BF_q$ and let $\breve F$ be the completion of the maximal unramified extension of $F$. We write $\Gamma$ for $\Gal(\overline F/F)$, and  $\Gamma_0$ for the inertia subgroup of $\Gamma$.

Let $\bG$ be a quasi-split connected reductive group over $F$. We set $\breve G=\bG(\breve F)$. Let $\s$ be the Frobenius morphism of $\breve F$ over $F$. We use the same symbol $\s$ for the induced Frobenius morphism on $\breve G$. Let $S$ be a maximal $\breve F$-split torus of $\bG$ defined over $F$, which contains a maximal $F$-split torus. Let $T$ be the centralizer of $S$ in $\bG$. Then $T$ is a maximal torus. We denote by $N$ the normalizer of $T$ in $\bG$. Let $W=N(\breve F)/T(\breve F)$ be the {\it relative Weyl group}. We fix a $\s$-stable Iwahori subgroup $\breve \CI$ of $\breve G$. Let $\tW= N(\breve F)/T(\breve F) \cap \breve \CI$ be the \emph{Iwahori--Weyl group}. The action $\s$ on $\breve G$ induces a natural action on $\tW$ and $W$, which we still denote by $\s$. For any $w \in \tW$, we choose a representative $\dot w$ in $N(\breve F)$. We have the splitting $$\tW=X_*(T)_{\G_0} \rtimes W=\{t^{\l} w; \l \in X_*(T)_{\G_0}, w \in W\}.$$ Here $X_*(T)_{\G_0}$ denotes the $\G_0$-coinvariants of $X_*(T)$.

Since $\bG$ is quasi-split over $F$, $\s$ acts naturally on $X_*(T)_{\G_0}$ and on $W$. We denote by $\ell$ the length function on $\tW$ and on $W$, and by $\le$ the Bruhat order on $\tW$ and on $W$. Let $\tilde \BS$ be the index set of simple reflections in $\tW$ and let $\BS \subset \tilde \BS$ be the index set of simple reflections in $W$. In other words, the set of simple reflections in $\tW$ is $\{s_i\mid i\in \tilde \BS\}$. 

For any $w \in W$, we denote by $\supp(w)$ the set of $i$ such that $s_i$ occurs in some (or, equivalently, any) reduced expressions of $w$, and we set $\supp_\s(w)=\cup_{l \in \BN} \s^l(\supp(w))$. 

An element $c \in W$ is called a {\it (full) $\s$-Coxeter element} if it is a product of simple reflections, one from each $\s$-orbit of $\BS$. An element $c \in W$ is called a {\it partial $\s$-Coxeter element} if it is a product of simple reflections, at most one from each $\s$-orbit of $\BS$. 

Let $\Phi$ be the reduced root system underlying the relative root system of $\bG$ over $\breve F$ (the {\it \'echelonnage root system}). For any $i \in \BS$, we denote by $\a_i$ and $\a_i^\vee$ the corresponding (relative) simple root and simple coroot respectively.

\subsection{The $\s$-conjugacy classes of $\breve G$}\label{sec:2.2}

The $\s$-conjugation action on $\breve G$ is defined by $g \cdot_\s g'=g g' \s(g) \i$. For $b \in \breve G$, we denote by $[b]$ the $\s$-conjugacy class of $b$. Let $B(\bG)$ be the set of $\s$-conjugacy classes on $\breve G$. The classification of the $\s$-conjugacy classes is due to Kottwitz in \cite{kottwitz-isoI} and \cite{kottwitz-isoII}. Any $\s$-conjugacy class $[b]$ is determined by two invariants: 
\begin{itemize}
	\item the element $\k([b]) \in \pi_1(\bG)_{\s}$; 	
	\item the Newton point $\nu_b \in \big((X_*(T)_{\Gamma_0, \BQ})^+\big)^{\langle\sigma\rangle}$. 
\end{itemize}

Here $\pi_1(\bG)_{\s}$ denotes the $\s$-coinvariants of $\pi_1(\bG)$, and
$(X_*(T)_{\Gamma_0, \BQ})^+$ denotes the intersection of  $X_*(T)_{\Gamma_0}\otimes \BQ=X_*(T)^{\Gamma_0}\otimes \BQ$ with the set $X_*(T)_\BQ^+$ of dominant elements in $X_*(T)_\BQ$. Denote $V=X_*(T)_{\Gamma_0}\otimes \BR$. For any $v \in V $, denote $I(v) = \{i\in\BS; \<v,\a_i \>=0\}$. Here $\< ~, ~\>:  V\times\BR\Phi \to \BR$ is the natural pairing. Let $V^+$ be the set of dominant vectors $v \in V$, that is, $\<v,\a_i \> \ge 0$ for $i \in \BS$.

The set $B(\bG)$ is equipped with a natural partial order: $[b] \le [b']$ if and only if $\k([b])=\k([b'])$ and $\nu_b \le \nu_{b'}$. Here $\le$ is the dominance order on the set of dominant elements in $X_*(T)_{\BQ}$, that is, $\nu \le \nu'$ if $\nu'-\nu$ is a non-negative rational linear combination of positive relative coroots. It is proved in \cite[Theorem 7.4]{Chai} that the poset $B(\bG)$ is ranked. For any $[b] \le [b']$ in $B(\bG)$, we denote by $\leng ([b], [b'])$ the length of any maximal chain between $[b]$ and $[b']$. 

Let $\mu$ be a dominant coweight. Let $\mu^\diamond$ be the average of the $\s$-orbit of $\mu$. The set of \emph{neutrally acceptable} $\s$-conjugacy classes is defined by 
$$
B(\bG, \mu)=\{[b] \in B(\bG); \k([b])=\k(\mu), \nu_b \le \mu^\diamond\}. 
$$

For any $i\in\BS$, let $\omega_i \in \BR\Phi$ be the corresponding fundamental weight. For any $\s$-orbit $\CO$ of $\BS$, let $\omega_\CO=\sum_{i \in \CO} \omega_i$. The following length formula is due to Chai (see \cite[Theorem 7.4]{Chai} and \cite[Theorem 3.4]{Vi20}). 

\smallskip

(a) {\it For $[b]\in B(\bG,\mu)$, $\leng([b],[t^{\mu}]) = \sum_{\CO\in \BS/\<\s\>} \lceil \<\mu-\nu_b,\omega_\CO\>\rceil.$}

\subsection{Hodge--Newton indecomposable/irreducible set}\label{sec:HN} For any $\s$-stable subset $J$ of $\BS$, we denote by $\bM_J$ the standard Levi subgroup of $\bG_{\breve F}$ associated with $J$. Let $W_J \subseteq W$ be the parabolic subgroup generated by the simple reflections in $J$.  Then $W_J$ is the Weyl group of $\bM_J$. 
Let $b \in \breve G$. We say that $(\mu, b)$ is {\it Hodge--Newton decomposable} with respect to $\bM_J$ if $I(\nu_b) \subseteq J$ and $\mu^{\diamond} - \nu_{b} \in \sum_{j \in J} \BR_{\ge0} \a_j^\vee$. If $(\mu, b)$ is not Hodge--Newton decomposable with respect to any proper $\s$-stable standard Levi subgroup of $\bG_{\breve F}$, then we say that $[b]$ is {\it Hodge--Newton indecomposable}. Set
$$B(\bG, \mu)_{\indec}=\{[b] \in B(\bG, \mu) ;  [b] \text{ is  Hodge--Newton indecomposable}\}.$$

We say that $(\mu, b)$ is {\it Hodge--Newton $J$-irreducible} if $\mu^{\diamond} - \nu_{b} \in \sum_{j \in J} \BR_{>0} \a_j^\vee$. Set $$B(\bG, \mu)_{J\text{-}\irr}=\{[b] \in B(\bG, \mu) ;  [b] \text{ is  Hodge--Newton $J$-irreducible}\}.$$
We simply write $B(\bG, \mu)_{\irr} = B(\bG, \mu)_{\BS\text{-}\irr}$.

We say that $\mu$ is {\it essentially non-central} with respect to $\bM_J$ if it is non-central on every $\s$-orbit of connected components of $J$. It is easy to see that $B(\bG, \mu)_{J\text{-}\irr}\ne\emptyset$ if and only if $\mu$ is essentially non-central with respect to $\bM_J$. We may simply say that $\mu$ is essentially non-central if it is essentially non-central with respect to $\bG$. If $\mu$ is essentially non-central, then $B(\bG, \mu)_{\irr} = B(\bG, \mu)_{\indec}$.

Let $\mathfrak{M}_{\mu}$ be the set of $\s$-stable subsets $J \subseteq \BS$ such that $\mu$ is essentially non-central in $J$. Then we have
$$B(\bG, \mu)=\sqcup_{J \in \mathfrak{M}_{\mu}} B(\bG, \mu)_{J\text{-}\irr}.$$ 

Let $J = \s(J) \subseteq \BS$. For $b \in \bM_J(\breve F)$, we denote by $[b]_{\bM_J}$ the $\s$-conjugacy class of $b$ in $\bM_J(\breve F)$, and denote by $\nu_b^{\bM_J}$ its $\bM_J$-dominant Newton point. We have the following result.

\begin{lemma} \label{Newton}
Let $\mu$ be a dominant coweight and $J$ be a $\s$-stable subset of $\BS$. Then
\begin{enumerate}
\item the map $\phi_J:B(\bM_J, \mu) \to B(\bG, \mu), ~ [b]_{\bM_J} \mapsto [b]$ is injective;
\item the image of $\phi_J$ consists of $[b] \in B(\bG, \mu)$ with $\mu^\diamond - \nu_b \in \sum_{i \in J}\BR_{\ge 0} \a_i^\vee$;
\item for $[b]_{\bM_J} \in B(\bM_J, \mu)$, $\leng_{\bG}([b], [t^\mu]) = \leng_{\bM_J}([b]_{\bM_J}, [t^\mu]_{\bM_J})$.
\end{enumerate}
\end{lemma}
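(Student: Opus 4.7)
The plan is to reduce everything to the Kottwitz classification of $\s$-conjugacy classes by the pair $(\kappa,\nu)$, leveraging a single key observation: for any $[b]_{\bM_J}\in B(\bM_J,\mu)$, the $\bM_J$-dominant Newton point $\nu_b^{\bM_J}$ is automatically $\bG$-dominant, so it coincides with the $\bG$-dominant Newton point $\nu_b$. This follows because the relation $\mu^\diamond-\nu_b^{\bM_J}\in\sum_{j\in J}\BR_{\ge 0}\a_j^\vee$, combined with $\bG$-dominance of $\mu^\diamond$ and the nonpositivity of off-diagonal Cartan entries, forces $\<\nu_b^{\bM_J},\a_i\>\ge 0$ for every $i\in\BS\setminus J$.

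Granted this observation, part (1) is nearly immediate: two classes in $B(\bM_J,\mu)$ share the $\kappa_{\bM_J}$-invariant $\kappa_{\bM_J}(\mu)$ by definition, and if they map to the same $\bG$-class, then by the key observation their $\bM_J$-Newton points both equal the common $\bG$-Newton point, so Kottwitz for $\bM_J$ forces them to coincide. For the forward direction of (2), the same observation converts $\nu_b^{\bM_J}\le_{\bM_J}\mu^\diamond$ directly into the stated condition on $\mu^\diamond-\nu_b$. For the converse, I would build $[b_0]_{\bM_J}\in B(\bM_J)$ via Kottwitz with $\kappa_{\bM_J}([b_0])=\kappa_{\bM_J}(\mu)$ and $\nu_{b_0}^{\bM_J}=\nu_b$ (the compatibility $\mu\equiv\nu_b$ modulo $\BQ\Phi_{\bM_J}^\vee$ follows from the hypothesis together with $\mu\equiv\mu^\diamond$ in $\pi_1(\bM_J)_\s\otimes\BQ$), and then verify that the $\bG$-invariants of $\phi_J([b_0]_{\bM_J})$ agree with those of $[b]$.

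For part (3), I would apply Chai's formula \S\ref{sec:2.2}(a) in both groups. Since $J$ is $\s$-stable, every $\s$-orbit $\CO\subseteq\BS$ either lies in $J$ or is disjoint from it. For $\CO$ disjoint from $J$, the pairing $\<\mu^\diamond-\nu_b,\omega_\CO\>$ vanishes because $\mu^\diamond-\nu_b$ lies in the span of $\{\a_j^\vee:j\in J\}$, so the $\bG$-length formula collapses to a sum over orbits $\CO\subseteq J$. For such $\CO$, the $\bG$-fundamental weight $\omega_\CO$ and the $\bM_J$-fundamental weight agree on $\sum_{j\in J}\BR\a_j^\vee$, so the two formulas match term by term. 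The main subtlety throughout is the careful bookkeeping of dominance with respect to $\bG$ versus $\bM_J$, which is concentrated in the key observation of the first paragraph; once that is in place, everything else is a formal application of Kottwitz and Chai.
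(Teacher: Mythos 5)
Your proposal is correct and follows essentially the same route as the paper: the whole lemma rests on the observation that $\mu^\diamond-\nu_b^{\bM_J}\in\sum_{j\in J}\BR_{\ge 0}\a_j^\vee$ forces $\nu_b^{\bM_J}$ to be $\bG$-dominant (hence equal to $\nu_b$), after which (1) is Kottwitz's classification for $\bM_J$, and (3) is a term-by-term comparison of Chai's length formula using (2). The only difference is cosmetic: for the converse direction of (2) you reconstruct the class in $B(\bM_J,\mu)$ directly from its $(\kappa,\nu)$-invariants, where the paper simply cites \cite[\S 7.1]{Chai} and \cite[Lemma 3.5]{HN2}; your verification of the compatibility between $\kappa_{\bM_J}(\mu)$ and $\nu_b$ is the content of those references.
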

\begin{proof}
Let $[b]_{\bM_J} \in B(\bM_J, \mu)$. Then $\mu^\diamond - \nu_b^{\bM_J} \in \sum_{i\in J}\BR_{\ge 0}\a_i^{\vee}$, which implies that $\nu_b^{\bM_J}$ is dominant with respect to $\bG$, and hence $\nu_b^{\bM_J} = \nu_b$. Now the Newton point and the Kottwitz point of $[b]_{\bM_J}$ are determined by $[b]$ and $\mu$ respectively. Hence $\phi_J$ is injective. 

Part (2) follows from \cite[\S7.1]{Chai} and \cite[Lemma 3.5]{HN2}. Part  (3) follows from part (2) and  Chai's length formula \S\ref{sec:2.2}(a).
\end{proof}

As a consequence, we have the following.

\begin{corollary}\label{cor:irr}
Let $J\in\mathfrak{M}_{\mu}$. Then the map $\phi_J$ in Lemma~\ref{Newton} induces a bijection $B(\bM_J,\mu)_{\irr} \cong B(\bG,\mu)_{J\text{-}\irr}$. 
\end{corollary}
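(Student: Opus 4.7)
The plan is to deduce the corollary directly from Lemma~\ref{Newton} by matching the defining inequalities of the two sets. No new geometric input is needed; the content is a bookkeeping check that the strict-positivity condition transfers between $\bM_J$ and $\bG$ under the map $\phi_J$.

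First I would unwind the hypothesis $J \in \mathfrak{M}_\mu$: it says $\mu$ is essentially non-central with respect to $\bM_J$, so the discussion just after the definition of $B(\bG,\mu)_{J\text{-}\irr}$ applies internally to $\bM_J$, giving the identification $B(\bM_J,\mu)_{\irr}=B(\bM_J,\mu)_{J\text{-}\irr}$. Explicitly, this set consists of those $[b]_{\bM_J}\in B(\bM_J,\mu)$ with
\[
\mu^{\diamond}-\nu_b^{\bM_J}\in\sum_{j\in J}\BR_{>0}\,\a_j^\vee .
\]

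Next I would exploit the key observation extracted from the proof of Lemma~\ref{Newton}: whenever $[b]_{\bM_J}\in B(\bM_J,\mu)$, the $\bM_J$-dominant Newton point $\nu_b^{\bM_J}$ is already $\bG$-dominant, hence coincides with $\nu_b$. Consequently the strict-positivity condition above is literally the same as the condition $\mu^{\diamond}-\nu_b\in\sum_{j\in J}\BR_{>0}\,\a_j^\vee$ defining $B(\bG,\mu)_{J\text{-}\irr}$. This immediately gives the inclusion $\phi_J\bigl(B(\bM_J,\mu)_{\irr}\bigr)\subseteq B(\bG,\mu)_{J\text{-}\irr}$.

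For the reverse inclusion, take $[b]\in B(\bG,\mu)_{J\text{-}\irr}$. Since $\sum_{j\in J}\BR_{>0}\,\a_j^\vee \subseteq \sum_{j\in J}\BR_{\ge 0}\,\a_j^\vee$, Lemma~\ref{Newton}(2) produces a (unique, by Lemma~\ref{Newton}(1)) preimage $[b']_{\bM_J}\in B(\bM_J,\mu)$. By the same Newton-point identification $\nu_{b'}^{\bM_J}=\nu_{b'}=\nu_b$, the class $[b']_{\bM_J}$ satisfies the strict inequality and therefore lies in $B(\bM_J,\mu)_{\irr}$. Combined with the injectivity of $\phi_J$, this establishes the claimed bijection. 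I do not foresee any obstacle; the corollary is essentially a definitional consequence of Lemma~\ref{Newton} once one observes that the Newton point is intrinsic to the $\s$-conjugacy class and does not depend on whether one computes it inside $\bM_J$ or $\bG$.
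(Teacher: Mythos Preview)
Your argument is correct and matches the paper's approach: the paper gives no explicit proof, stating the corollary ``as a consequence'' of Lemma~\ref{Newton}, and your write-up is precisely the unpacking of that consequence. One small remark: the identification $B(\bM_J,\mu)_{\irr}=B(\bM_J,\mu)_{J\text{-}\irr}$ holds by definition (since $J$ is the full set of simple roots for $\bM_J$) and does not require the hypothesis $J\in\mathfrak{M}_\mu$; that hypothesis only ensures both sides of the bijection are nonempty.
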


\subsection{Affine Deligne--Lusztig varieties}\label{sec:adlv}
Let $Fl=\breve G/\breve \CI$ be an {\it affine flag variety}. For any $b \in \breve G$ and $w \in \tW$, we define the corresponding {\it affine Deligne--Lusztig variety} in the affine flag variety $$X_w(b)=\{g \breve \CI \in \breve G/\breve \CI; g \i b \s(g) \in \breve \CI \dot w \breve \CI\} \subset Fl.$$

Let $\kk$ be the residue field of $\breve F$. In the equal characteristic setting, the affine Deligne--Lusztig variety $X_w(b)$ is the set of $\kk$-valued points of a locally closed subscheme of the affine flag variety, equipped with the reduced scheme structure. In the mixed characteristic setting, we consider $X_w(b)$ as the $\kk$-valued points of a perfect scheme in the sense of Zhu \cite{Zhu} and Bhatt--Scholze \cite{BS}, a locally closed perfect subscheme of the $p$-adic partial flag variety. 

We denote by $\Sigma^{\mathrm{top}}(X_w(b))$ the set of top-dimensional irreducible components of $X_w(b)$. Let $\bJ_b$ be the $\s$-centralizer of $b$ and let $\bJ_b(F)=\{g \in \breve G; g b \s(g) \i=b\}$ be the group of $F$-points of $\bJ_b$. The left action of $\bJ_b(F)$ on $X_w(b)$ induces an action of $\bJ_b(F)$ on $\Sigma^{\mathrm{top}}(X_w(b))$. We denote by $\bJ_b(F) \backslash \Sigma^{\mathrm{top}}(X_w(b))$ the set of $\bJ_b(F)$-orbits on $\Sigma^{\mathrm{top}}(X_w(b))$. 

Note that if $b$ and $b'$ are $\s$-conjugate in $\breve G$, then $X_w(b)$ and $X_w(b')$ are isomorphic. Thus the affine Deligne--Lusztig variety $X_w(b)$ (up to isomorphism)  depends only on the element $w$ in the Iwahori--Weyl group $\tW$ and the $\s$-conjugacy class $[b]$ in $B(\bG)$. We set $$B(\bG)_w=\{[b] \in B(\bG); X_w(b) \neq \emptyset\}.$$

There is a unique maximal $\s$-conjugacy class in $B(\bG)_w$, which we denote by $[b_{w}]$. By~\cite[Lemma 3.2]{MV}, $\dim X_{w}(b_{w}) = \ell(w) - \<{\nu}_{b_{w}},2\rho\>$. Here $\rho$ is the half sum of the positive roots in $\Phi$.

\subsection{Cordial elements}\label{sec:cordial}
For $b \in \breve G$, the \emph{defect} of $b$ is defined to be $$\de(b)=\rank_F \bG-\rank_F \bJ_b,$$ where $\rank_F$ denotes the $F$-rank of a reductive group over $F$.

By \cite[Theorem 3.4]{Vi20}, we have the following length formula: 

\smallskip

(a) For $[b] \in B(\bG, \mu)$, $\leng([b],[t^{\mu}]) = \<\mu - \nu_b,\rho\> + \frac{1}{2}\de(b).$

\smallskip 

Let ${}^\BS \tW$ be the set of minimal length representatives for the cosets in $W \backslash \tW$. Note that any element $w\in \tW$ can be written in a unique way as $w = xt^{\mu}y$ with $\mu$ dominant, $x,y\in W$ such that $t^{\mu}y\in {}^{\BS}\tW$. We have $\ell(w)=\ell(x)+\<\mu,2\rho\>-\ell(y)$. In this case, we set $\eta_{\s}(w) = \s^{-1}(y)x$. The {\it virtual dimension} is defined to be 
$$d_{w}(b) = \tfrac{1}{2}(\ell(w)+\ell(\eta_{\s}(w) ) - \de(b) - \<\nu_b,2\rho\>).$$ 
For $w = t^{\mu}y \in {}^{\BS}\tW$, it is easy to see that $d_w(b) = \<\mu - \nu_b, \rho\> - \frac{1}{2}\de(b)$.

By \cite[Theorem 10.3]{He14} and \cite[Theorem 2.30]{He-CDM}, we have

\smallskip

(b) For $w\in \tW$ and $b\in \breve G$, $\dim X_{w}(b) \leq d_{w}(b)$.

\smallskip

\noindent In the special case $[b] = [b_w]$, (b) implies that
$$\ell(w) - \ell(\eta_{\s}(w)) \leq \<\nu_{b_{w}}, 2\rho\> - \de(b_{w}).$$

Cordial elements were introduced by Mili\'cevi\'c and Viehmann in \cite{MV}. By definition, an element $w \in \tW$ is {\it cordial} if $\dim X_{w}(b_{w}) = d_{w}(b_{w})$. This condition is equivalent to the condition that $\ell(w) - \ell(\eta_{\s}(w)) = \<\nu_{b_{w}}, 2\rho\>-\de(b_{w})$. The following nice properties of the cordial elements are established in \cite{MV}. 

\begin{theorem}\label{saturated}
Let $w \in \tW$ be a cordial element. Then 
\begin{enumerate}
\item  $B(\bG)_w$ is saturated, that is, if $[b_1]\le[b_2]\le[b_3]$ in $B(\bG)$ and $[b_1],[b_3]\in B(\bG)_w$, then $[b_2]\in B(\bG)_w$; 
\item for each $[b]\in B(\bG)_w$, $X_w(b)$ is equi-dimensional of dimension equal to $d_w(b)$.
\end{enumerate}
\end{theorem}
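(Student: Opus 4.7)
The proof plan follows the strategy of Mili\'cevi\'c--Viehmann \cite{MV}, connecting the virtual dimension $d_w(b)$ to the length function on the poset $B(\bG)$ via Chai's formula. A direct computation using \S\ref{sec:cordial}(a) shows that
\[
d_w(b) \;=\; \tfrac{1}{2}\bigl(\ell(w)+\ell(\eta_\sigma(w))\bigr)-\langle\mu,\rho\rangle+\leng\bigl([b],[t^{\mu}]\bigr),
\]
where $\mu$ is the dominant coweight attached to $w$. Thus $d_w$ is affine-linear in $\leng([b],[t^\mu])$, and for any $[b]\le[b']$ in $B(\bG,\mu)$ one has $d_w(b)-d_w(b') = \leng([b],[b'])$. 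I will prove both parts simultaneously by establishing the following uniform statement: for every $[b]\in B(\bG,\mu)$ with $[b]\le[b_w]$, one has $[b]\in B(\bG)_w$ and $\dim X_w(b)=d_w(b)$.

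The proof of this statement uses the Deligne--Lusztig reduction method of He \cite{He14}, which produces a reduction tree for $w$ whose leaves are minimal-length elements representing $\sigma$-conjugacy classes; a leaf of type $[b']$ combined with a number $r$ of type-$\mathrm{I}$ edges along a root-to-leaf path contributes a subvariety of $X_w(b')$ of dimension $\ell(w_{\text{leaf}})+r-\langle\nu_{b'},2\rho\rangle$, matching the virtual dimension precisely when $r$ attains the value $\frac{1}{2}(\ell(w)-\ell(w_{\text{leaf}}))$. The cordiality hypothesis $\dim X_w(b_w)=d_w(b_w)$, combined with the upper bound \S\ref{sec:cordial}(b), is equivalent to the existence of such a \emph{maximal} reduction path to a leaf of type $[b_w]$, i.e.\ one where every reduction step contributes maximally. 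Call this path $\mathcal{P}_{\max}$.

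Given any $[b]\le[b_w]$, I would take a maximal chain $[b]=[b^{(0)}]<[b^{(1)}]<\cdots<[b^{(k)}]=[b_w]$ in the ranked poset $B(\bG,\mu)$, where $k=\leng([b],[b_w])$, and argue inductively that such a chain can be realized by perturbing $\mathcal{P}_{\max}$ at suitable branching points. The crucial input is that at every intermediate element $w'$ of $\mathcal{P}_{\max}$, any alternative move $w'\to w''$ of type $\mathrm{II}$ (decreasing the Newton class) lands in a class immediately below; concatenating such a branching with the remainder of $\mathcal{P}_{\max}$ yields a reduction path to a class one step below $[b_w]$, with exactly one less type-$\mathrm{I}$ edge, so it still meets the virtual dimension. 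Iterating this branching $k$ times produces a reduction path to a leaf of type $[b]$ realizing $d_w(b)$; this gives the lower bound $\dim X_w(b)\ge d_w(b)$, and combined with \S\ref{sec:cordial}(b) yields equi-dimensionality of $X_w(b)$ at the virtual dimension.

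Part (1) is then immediate: if $[b_1]\le[b_2]\le[b_3]$ with $[b_1],[b_3]\in B(\bG)_w$, then $[b_3]\le[b_w]$ and hence $[b_2]\le[b_w]$ lies in $B(\bG,\mu)$, so by the uniform statement $[b_2]\in B(\bG)_w$. The main obstacle is the inductive branching argument in the middle paragraph: one must show that along the maximal cordial reduction path, enough type-$\mathrm{II}$ branchings exist to reach every Newton class below $[b_w]$ with the precise balance of type-$\mathrm{I}$ moves needed to saturate the virtual dimension. This is ultimately a combinatorial statement about how the rank function of $B(\bG,\mu)$ is reflected in the depth structure of the reduction tree, and it is precisely the point at which cordiality, which by definition is a statement about the single class $[b_w]$, propagates to all lower classes.
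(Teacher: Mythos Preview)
First, note that the paper does not prove Theorem~\ref{saturated}: it is quoted from Mili\'cevi\'c--Viehmann \cite{MV} and used as a black box. So there is no ``paper's own proof'' to compare against; the relevant comparison is with the argument in \cite{MV}.

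Your proposal has a genuine gap at the level of the \emph{statement} you try to prove. The ``uniform statement'' you aim for---that every $[b]\in B(\bG,\mu)$ with $[b]\le[b_w]$ lies in $B(\bG)_w$---is strictly stronger than saturation, and it is false for cordial elements in general. Indeed, the paper itself furnishes counterexamples: by \S\ref{tmuc_case}, for $w=t^{\mu}c\in{}^{\BS}\tW$ with $c$ a \emph{partial} $\s$-Coxeter element supported on $J\subsetneq\BS$, the element $w$ is cordial and $B(\bG)_w=B(\bG,\mu)_{J\text{-}\irr}$, whose minimal element is basic only in $\bM_J$, not in $\bG$. Thus the interval $\{[b]:[b]\le[b_w]\}$ strictly contains $B(\bG)_w$. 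Your deduction of Part~(1) from the uniform statement therefore cannot work as written: saturation of $B(\bG)_w$ does \emph{not} follow from knowing only the top element $[b_w]$; one also needs control over the minimal elements, which your argument never addresses.

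There are also smaller errors. Your displayed formula for $d_w(b)$ omits a $-\de(b)$ term (combine the definition of $d_w(b)$ with \S\ref{sec:cordial}(a)), so the consequence $d_w(b)-d_w(b')=\leng([b],[b'])$ is off by $\de(b')-\de(b)$. And the branching argument---that a type~II move at any node of $\CP_{\max}$ drops the Newton class by exactly one step in the poset, and that iterating this reaches \emph{every} $[b]\le[b_w]$---is asserted without justification; nothing in the reduction formalism guarantees either claim. The actual proof in \cite{MV} proceeds quite differently, via purity of the Newton stratification of $\breve\CI\dot w\breve\CI$ combined with a dimension comparison, and does not attempt to trace explicit reduction paths.
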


\subsection{Minimal length elements} We consider the $\s$-conjugation action on $\tW$ defined by $w \cdot_\s w'=w w' \s(w) \i$. Let $B(\tW, \s)$ be the set of $\s$-conjugacy classes of $\tW$. For any $\s$-conjugacy class $\CO$ of $\tW$, we let $\CO_{\min}$ be the set of minimal length elements in $\CO$, and we write $\ell(\CO)=\ell(w)$ for any $w \in \CO_{\min}$.  

For $w, w' \in \tW$ and $i \in \tilde \BS$, we write $w \xrightarrow{s_i}_\s w'$ if $w'=s_i w \s(s_i)$ and $\ell(w') \le \ell(w)$. We write $w \to_\s w'$ if there is a sequence $w=w_0, w_1, \ldots, w_n=w'$ of elements in $\tW$ such that for any $k$, $w_{k-1} \xrightarrow{s_i}_\s w_k$ for some $s_i \in \tilde \BS$. We write $w \approx_\s w'$ if $w \to_\s w'$ and $w' \to_\s w$. 

We call $w, w' \in \tW$ {\it elementarily strongly $\s$-conjugate} if $\ell(w)=\ell(w')$ and there exists $x \in \tW$ such that $w'=x w \s(x) \i$ and $\ell(x w)=\ell(x)+\ell(w)$ or $\ell(w \s(x) \i)=\ell(x)+\ell(w)$. We call $w, w'$ {\it strongly $\s$-conjugate} if there is a sequence $w=w_0, w_1, \ldots, w_n=w'$ such that for each $i$, $w_{i-1}$ is elementarily strongly $\s$-conjugate to $w_i$. We write $w \sim_\s w'$ if $w$ and $w'$ are strongly $\s$-conjugate. 

The following result is proved in \cite[Theorem 2.10]{HN}.

\begin{theorem}\label{min} Let $\CO$ be a $\s$-conjugacy class in $\tW$. Then the following hold:
\begin{enumerate}
\item For each element $w \in \CO$, there exists $w' \in \CO_{\min}$ such that $w \rightarrow_\s w'$.
\item Let $w, w' \in \CO_{\min}$. Then $w \sim_\s w'$.
\end{enumerate}
\end{theorem}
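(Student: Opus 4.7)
The plan is to establish both statements by induction on $\ell(w)$, generalizing the classical Geck--Pfeiffer theorem for finite Weyl groups to the extended affine Weyl group $\tW$ with its twisted $\s$-conjugation action. The basic tool is the twisted cyclic shift: for $s\in\tilde\BS$, conjugation $w\mapsto sw\s(s)$ changes the length by $0$ or $\pm 2$, with the precise value dictated by whether $s$ lies in the left descent set of $w$ and whether $\s(s)$ lies in its right descent set. This trichotomy underlies every step below.

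For part (1), the central step is a \emph{reduction lemma}: if $w\in\CO$ has non-minimal length, then there exist $w_1\in\CO$ with $w\approx_\s w_1$ and $s\in\tilde\BS$ such that $\ell(sw_1\s(s))<\ell(w)$. To prove this, I would write $w=xt^\mu y$ with $x,y\in W$ and $\mu$ dominant, and argue in two stages using the semidirect product structure $\tW=X_*(T)_{\G_0}\rtimes W$. First, using length-non-increasing twisted cyclic shifts by translations, conjugate $w$ into a form where its translation part has been moved into a favorable chamber position. Second, with the translation part controlled, apply a Geck--Pfeiffer style descent argument on the finite-Weyl-group ``shell'': a non-minimal length element in a $\s$-twisted conjugacy class of a finite Weyl group always admits a length-reducing cyclic shift after a chain of length-preserving shifts. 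Iterating and gluing the two stages yields an element of $\CO_{\min}$ reachable from $w$ by $\to_\s$, proving (1).

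For part (2), take $w,w'\in\CO_{\min}$ and induct on $\ell(w)$. Since $w$ and $w'$ are $\s$-conjugate, write $w'=zw\s(z)\i$ for some $z\in\tW$, chosen of minimal length among all such elements. Decomposing $z$ into simple reflections and applying the length-change trichotomy at each step, I would show the conjugation chain can be refactored so that every intermediate step is \emph{elementarily} strongly $\s$-conjugate, i.e.\ satisfies $\ell(sw'')=\ell(s)+\ell(w'')$ (or the symmetric condition). The minimality of $\ell(w)=\ell(w')$ forces every intermediate element to have length $\ell(w)$ and to lie in $\CO_{\min}$, so the inductive hypothesis applies to adjacent pairs in the chain and delivers $w\sim_\s w'$.

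The main obstacle is the reduction lemma of part (1): the finite Geck--Pfeiffer argument cannot be invoked directly in the affine setting because length-reducing conjugation moves may interact nontrivially with the translation lattice. The hard case analysis concerns how the affine simple reflection $s_0$ behaves under conjugation by translations, and guaranteeing that the length-preserving intermediate chain $w\approx_\s w_1$ exists within $\CO$ rather than requiring an excursion to longer elements. Controlling the interplay of twisted conjugation with the Bruhat order on $\tW$, via the Iwahori--Bruhat decomposition and the action of translations on the affine root system, is the technical heart of the argument.
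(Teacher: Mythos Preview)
The paper does not prove this theorem; it simply cites it as \cite[Theorem 2.10]{HN} (He--Nie, \emph{Minimal length elements of extended affine Weyl group}, Compos.\ Math.\ 2014). There is nothing in the present paper to compare your proposal against beyond that citation.

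As for your proposal itself: it is a plan rather than a proof, and you correctly identify where the gap lies. The two-stage scheme you describe for part~(1)---first normalize the translation part, then run finite Geck--Pfeiffer on the Weyl-group shell---does not work as stated, because the $\s$-twisted cyclic shifts by affine simple reflections do not respect the decomposition $\tW = X_*(T)_{\Gamma_0} \rtimes W$ in any useful way: conjugating by $s_0$ mixes the translation and finite parts. Your final paragraph concedes exactly this, but offers no mechanism to overcome it. The actual proof in \cite{HN} does not separate translation and finite parts. Instead it proceeds via the geometry of alcoves: one reduces to \emph{straight} $\s$-conjugacy classes (those containing an element $w$ with $\ell(w^n) = n\ell(w)$ in the appropriate twisted sense), establishes the result for straight classes using properties of good position elements and Newton points, and then handles general classes by relating them to straight ones via a parabolic reduction. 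None of this structure appears in your sketch.

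For part~(2), your outline is closer in spirit to what is done, but the claim that ``minimality of $\ell(w)=\ell(w')$ forces every intermediate element to have length $\ell(w)$'' is exactly what needs to be proved and does not follow from the trichotomy alone; arranging this is again where the alcove-geometric input from \cite{HN} enters.
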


\subsection{Decompositions of affine Deligne--Lusztig varieties}
We recall the Deligne--Lusztig reduction method on  affine Deligne--Lusztig varieties.

\begin{proposition}\label{DLReduction1}\cite[Proposition 3.3.1]{HZZ}
		Let $w\in \tW$, $i\in \tS$, and $b \in \breve G$. If $\operatorname{char}(F) >0$, then the following two statements hold.
		\begin{enumerate}
			\item	If $\ell(s_iw\s(s_i)) =\ell(w)$, then there exists a $\bJ_b(F)$-equivariant morphism $X_w(b) \to X_{s_iw\s(s_i)}(b)$ which is a universal homeomorphism.
			\item
			If $\ell(s_iw\s(s_i)) =\ell(w)-2$, then $X_w(b)=X_1 \sqcup X_2$, where $X_1$ is a $\bJ_b(F)$-stable open subscheme $X$ of $X_w(b)$ and $X_2$ is its closed complement satisfying the following conditions:
			\begin{itemize}
				\item $X_1$ is $\bJ_b(F)$-equivariant universally homeomorphic to a Zariski-locally trivial $\BG_m$-bundle over $X_{s_i w}(b)$;
		        \item $X_2$ is $\bJ_b(F)$-equivariant universally homeomorphic to a Zariski-locally trivial $\BA^1$-bundle over $X_{s_i w \s(s_i)}(b)$.
			\end{itemize} 
		\end{enumerate}
		If $\operatorname{char}(F) =0$, then the above two statements still hold, but with $\BA^1$ and $\BG_m$ replaced by the perfections $\BA^{1, \mathrm{perf}}$ and $\BG^{\mathrm{perf}}_m$, respectively. 
\end{proposition}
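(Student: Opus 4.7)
The plan is to work with the $\BP^1$-fibration $\pi_i\colon Fl = \breve G/\breve\CI \to \breve G/\breve\CP_i$, where $\breve\CP_i \supseteq \breve\CI$ is the parahoric subgroup generated by $\breve\CI$ and a lift $\dot s_i$. Each fiber decomposes into the closed point $\breve\CI/\breve\CI$ and the open cell $\breve\CI\dot s_i\breve\CI/\breve\CI \cong \BA^1$, and $\pi_i$ is $\bJ_b(F)$-equivariant for the left action, so any fiber-wise construction automatically inherits $\bJ_b(F)$-equivariance. In mixed characteristic the same picture will hold after passing to perfect schemes in the sense of Zhu and Bhatt--Scholze, with $\BA^1$ and $\BG_m$ replaced throughout by their perfections.

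The first technical task is to track, as $g\breve\CI$ varies in a fixed $\pi_i$-fiber, how the Iwahori type of $g^{-1}b\s(g)$ changes. Writing $g'=gh$ with $h\in\breve\CP_i$, one has $(g')^{-1}b\s(g') = h^{-1}(g^{-1}b\s(g))\s(h)$, which remains in $\breve\CI\dot w\breve\CI$ when $h\in\breve\CI$ and lies in the triple product $\breve\CI\dot s_i\breve\CI \cdot \breve\CI\dot w\breve\CI \cdot \breve\CI\dot{\s(s_i)}\breve\CI$ when $h$ belongs to the open cell. Applying the standard product rule ($\breve\CI\dot s\breve\CI \cdot \breve\CI\dot w\breve\CI$ equals $\breve\CI\dot{sw}\breve\CI$ if $sw>w$, and $\breve\CI\dot{sw}\breve\CI \sqcup \breve\CI\dot w\breve\CI$ otherwise) to both factors and enumerating the subcases indexed by the signs of $\ell(s_iw)-\ell(w)$ and $\ell(w\s(s_i))-\ell(w)$ will identify precisely which Iwahori double cosets meet each fiber.

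For case (1), the hypothesis $\ell(s_iw\s(s_i))=\ell(w)$ selects one of two subcases (depending on whether $s_iw>w$ or $s_iw<w$), and in either the enumeration will show that each fiber meets $X_w(b)$ and $X_{s_iw\s(s_i)}(b)$ in exactly one point each. The resulting bijection $g\breve\CI \mapsto g'\breve\CI$, given by an explicit algebraic recipe on the open cell, will furnish the $\bJ_b(F)$-equivariant universal homeomorphism $X_w(b)\to X_{s_iw\s(s_i)}(b)$.

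For case (2), the hypotheses force $\ell(s_iw)=\ell(w\s(s_i))=\ell(w)-1$ and $\ell(s_iw\s(s_i))=\ell(w)-2$, and the triple product decomposes into the four Iwahori strata $\breve\CI\dot w\breve\CI$, $\breve\CI\dot{s_iw}\breve\CI$, $\breve\CI\dot{w\s(s_i)}\breve\CI$, $\breve\CI\dot{s_iw\s(s_i)}\breve\CI$. A direct parameterization of the open cell $\BA^1$ in each fiber will show that the $w$-locus in that cell is either (i) a $\BG_m$ lying over a point of $X_{s_iw}(b)$, assembling globally into an open subvariety $X_1$, or (ii) an $\BA^1$ lying over a point of $X_{s_iw\s(s_i)}(b)$, assembling into the closed complement $X_2$; the dichotomy is dictated by which of the two lower-length strata $\breve\CI\dot{s_iw}\breve\CI$ or $\breve\CI\dot{s_iw\s(s_i)}\breve\CI$ the $\pi_i$-image of the fiber meets. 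Zariski-local triviality of both bundles will follow from the affine parameterization of the open cell, while $\bJ_b(F)$-equivariance is automatic. The main obstacle is the delicate Bruhat bookkeeping in case (2): I will need to verify fiber by fiber the correct matching of the $\BG_m$ and $\BA^1$ loci with their asserted base ADLVs, and then check that the local picture glues into the Zariski-locally trivial global bundle structure claimed in the statement.
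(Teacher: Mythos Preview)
The paper does not supply its own proof of this proposition; it is simply quoted with a citation to \cite[Proposition~3.3.1]{HZZ}, so there is no argument in the paper to compare against. Your outline follows the standard route to such Deligne--Lusztig reduction statements (going back to \cite{DL} in the finite case and adapted to the affine flag variety in, e.g., \cite{GHKR10} and \cite{He14}): pass to the $\BP^1$-fibration $Fl\to \breve G/\breve\CP_i$, track the Iwahori stratum of $g^{-1}b\s(g)$ along the open and closed cells, and read off the fibration structure from the Bruhat product rules. That is exactly how the cited result is proved, so your plan is the right one.

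Two small cautions about your sketch. In case~(1) the map is most cleanly realised not as a fibrewise bijection to be discovered, but directly as right translation $g\breve\CI\mapsto g\dot s_i\breve\CI$ (or its inverse), after which one checks via the length hypothesis that $\dot s_i^{-1}\breve\CI\dot w\breve\CI\s(\dot s_i)\subseteq \breve\CI\dot{s_iw\s(s_i)}\breve\CI$ in the relevant subcase; framing it this way avoids any ambiguity about what ``the bijection'' is. In case~(2), your description of the dichotomy is correct, but when you parameterise the open cell you should be careful that the ``universal homeomorphism'' (rather than isomorphism) is genuinely needed: the fibrewise coordinate involves extracting a root from a unit, which over $\kk$ is only a bijection on points, not an isomorphism of schemes in positive characteristic, and is the reason perfections appear in mixed characteristic.
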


For any $a_1, a_2 \in \BN$, we say that a scheme $X$ is an {\it iterated fibration} of type $(a_1, a_2)$ over a scheme $Y$ if there exist morphisms $$X=Y_0 \to Y_1 \to \cdots \to Y_{a_1+a_2}=Y$$ such that for any $i$ with $0 \le i<a_1+a_2$, $Y_i$ is a Zariski-locally trivial $\BA^{1, (\mathrm{perf})}$-bundle or $\BG^{(\mathrm{perf})}_m$-bundle over $Y_{i+1}$, and there are exactly $a_1$ locally trivial $\BG^{(\mathrm{perf})}_m$-bundles in the sequence. In this case, there are exactly $a_2$ locally trivial $\BA^{1, (\mathrm{perf})}$-bundles in the sequence.

Now combining Theorem~\ref{min} with Proposition~\ref{DLReduction1}, we have the following.

\begin{proposition}
Let $w \in \tW$ and $b \in \breve G$. Then $X_w(b)$ admits a decomposition into finitely many locally closed, $\bJ_b(F)$-stable subschemes $X_i$, and each $X_i$ is $\bJ_b(F)$-equivariant universally homeomorphic to an iterated fibration of type $(a_1, a_2)$ over $X_{w'}(b)$ for some $w'$ of minimal length in its $\s$-conjugacy class in $\tW$ and some $a_1, a_2 \in \BN$.
\end{proposition}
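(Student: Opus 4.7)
Plan: I would prove this by a straightforward induction on $\ell(w)$, combining Theorem~\ref{min}(1) with the Deligne--Lusztig reduction in Proposition~\ref{DLReduction1}. The base case is when $w$ itself lies in $\CO_{\min}$ for its $\s$-conjugacy class $\CO$; then the trivial decomposition $\{X_w(b)\}$ works, with $w' = w$ and $(a_1,a_2) = (0,0)$, since the definition of iterated fibration permits the empty chain $X = Y_0 = Y$.

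For the inductive step, suppose $w \notin \CO_{\min}$. Theorem~\ref{min}(1) provides a chain
$$w = w_0 \xrightarrow{s_{i_1}}_\s w_1 \xrightarrow{s_{i_2}}_\s \cdots \xrightarrow{s_{i_n}}_\s w_n \in \CO_{\min},$$
and since $\ell(w) > \ell(w_n)$ at least one step must be length-decreasing. Let $k$ be the smallest index with $\ell(w_k) = \ell(w_{k-1}) - 2$. All steps $w_0 \to \cdots \to w_{k-1}$ preserve length, so iterating Proposition~\ref{DLReduction1}(1) produces a $\bJ_b(F)$-equivariant universal homeomorphism $X_w(b) \to X_{w_{k-1}}(b)$; as a composition of universal homeomorphisms is again one, it suffices to decompose $X_{w_{k-1}}(b)$.

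Applying Proposition~\ref{DLReduction1}(2) at $w_{k-1}$ with the reflection $s_{i_k}$ gives a $\bJ_b(F)$-stable open/closed decomposition $X_{w_{k-1}}(b) = X'_1 \sqcup X'_2$, where $X'_1$ is universally homeomorphic to a Zariski-locally trivial $\BG_m$-bundle over $X_{s_{i_k} w_{k-1}}(b)$ and $X'_2$ is universally homeomorphic to a Zariski-locally trivial $\BA^1$-bundle over $X_{w_k}(b)$ (with the perfections understood in mixed characteristic). Since $\ell(s_{i_k} w_{k-1}) = \ell(w) - 1$ and $\ell(w_k) = \ell(w) - 2$, the inductive hypothesis applies to both underlying affine Deligne--Lusztig varieties, yielding $\bJ_b(F)$-stable, locally closed decompositions whose pieces are each universally homeomorphic to an iterated fibration of some type $(a_1, a_2)$ over some $X_{w'}(b)$ with $w'$ of minimal length in its $\s$-conjugacy class. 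Pulling back these pieces along the bundle projections and then prepending one extra $\BG_m$-layer (for $X'_1$) or $\BA^1$-layer (for $X'_2$) at the top of each chain refines $X'_1$ and $X'_2$ into the desired collections, with the corresponding index $a_1$ or $a_2$ incremented by one. Transporting back through the universal homeomorphism $X_w(b) \to X_{w_{k-1}}(b)$ completes the induction.

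The proof is essentially formal bookkeeping; there is no substantive obstacle. The mild points to verify are that $\bJ_b(F)$-equivariance and local closedness pass through pullback along a $\bJ_b(F)$-equivariant Zariski-locally trivial $\BA^1$- or $\BG_m$-bundle, that iterated fibrations are stable under prepending a further such bundle at the top, and that universal homeomorphisms compose; all of these are routine. The hard combinatorial content has already been absorbed into Theorem~\ref{min} and the hard geometric content into Proposition~\ref{DLReduction1}.
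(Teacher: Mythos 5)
Your proposal is correct and is essentially the argument the paper intends: the proposition is stated as an immediate consequence of combining Theorem~\ref{min}(1) with Proposition~\ref{DLReduction1}, via exactly the induction on $\ell(w)$ you describe (universal homeomorphisms for the length-preserving conjugation steps, then the open/closed $\BG_m$/$\BA^1$-bundle dichotomy at the first length-decreasing step, followed by the inductive hypothesis on the two shorter elements). The bookkeeping points you flag — stability of local closedness and equivariance under pullback, composability of universal homeomorphisms, and prepending a bundle layer to an iterated fibration — are indeed routine and suffice.
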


\subsection{Classical Deligne--Lusztig varieties}\label{sec:classicalDL} Let $\BF_q$ be a finite field and let $\bar \BF_q$  an algebraic closure of $\BF_q$. Let $\bH$ be a connected reductive group over $\BF_q$ and $H=\bH(\bar \BF_q)$. Let $\s_{\bH}$ be the Frobenius morphism on $H$. Let $B$ be a $\s_{\bH}$-stable Borel subgroup of $H$. Let $W_{H}$ be the Weyl group of $H$ and let $\BS_H$ be the set of simple reflections. Then $\s_{\bH}$ induces a group automorphism on $W_H$ preserving $\BS_{H}$. 
(Classical) Deligne--Lusztig varieties were introduced by Deligne and Lusztig in \cite{DL}. They are defined as follows. For $x \in W_H$, we set $$X^{\bH}_x=\{h B \in H/B; h \i \s(h) \in B \dot x B\}.$$
If $x$ is a $\s_{\bH}$-Coxeter element of $W_H$, then we say that $X^{\bH}_x$ is a {\it classical Deligne--Lusztig variety of Coxeter type} for $\bH$. It is well known that  classical Deligne--Lusztig varieties of Coxeter type are irreducible. 

It is proved in \cite[Theorem 4.8]{He14} that if $w \in \tW$ is a minimal length element in its $\s$-conjugacy class, then $X_w(b) \neq \emptyset$ if and only if $b$ and $\dot w$ are in the same $\s$-conjugacy class of $\breve G$. In this case, \[\tag{a} X_w(b) \cong \bJ_b(F) \times^P X.\] Here $\cong$ means a $\bJ_b(F)$-equivariant universal homeomorphism, $P$ is a parahoric subgroup of $\bJ_b(F)$, and $X$ is (the perfection of) a classical Deligne--Lusztig variety for some connected reductive group $\bH$ over $\BF_q$ with $\bH(\BF_q)$ isomorphic to the reductive quotient of $P$. The group $P$ acts on $\bJ_b(F) \times X$ by $p \cdot (g, z)=(g p \i, p \cdot z)$,  and $\bJ_b(F) \times^P X$ is the quotient space. 

\subsection{Very special parahoric subgroups}\label{sec:special} We follow \cite[\S 2]{HZZ} for the definition of very special parahoric subgroups. 

Let $\bG_1$ be a (not necessarily quasi-split) connected reductive group over $F$ and let $\s_1$ be its Frobenius morphism. Let $\tW_1$ be the Iwahori--Weyl group $\bG_1$. We still denote by $\s_1$ the action on $\tW_1$ induced from the Frobenius morphism on $\bG_1$. Let $\tilde \BS_1$ be the set of simple reflections in $\tW_1$. 

A parahoric subgroup $P$ of $\bG_1(F)$ is called {\it very special} if it is of maximal volume among all the parahoric subgroups of $\bG_1(F)$. A $\s_1$-stable subset $\breve K \subseteq \tilde \BS_1$ is called {\it very special} with respect to $\s_1$ if the parabolic subgroup $W_{\breve K}$ generated by $\breve K$ is finite and the longest element of $W_{\breve K}$ is of maximal length among all such $\s_1$-stable subsets of $\tilde \BS_1$. By \cite[Proposition 2.2.5]{HZZ}, 

\smallskip

(a) {\it A $\s_1$-stable subset $\breve K$ of $\tilde \BS_1$ is very special if and only if $\breve P_{\breve K} \cap \bG_1(F)$ is a very special parahoric parahoric subgroup of $\bG_1(F)$, where $\breve P_{\breve K}$ is the parahoric subgroup of $\bG_1(\breve F)$ corresponding to $\breve K$.} 

\subsection{Statement of the main result}
Let $w\in\tW$. We say that $w$ {\it has finite $\s$-Coxeter part} if $\eta_{\s}(w)$ is a partial $\s$-Coxeter element. In this case, we set $J(w) = \supp_{\s}(\eta_{\s}(w))$. For any $\s$-stable subset $J$ of $J(w)$, denote by $J_{\mu}'$ (resp. $J_{\mu}''$)  the union of all $\s$-orbits of connected components of $J$ in which $\mu$ is non-central (resp. central). Then $\mu$ is essentially non-central in $J_{\mu}'$. By \S\ref{sec:HN}, $B(\bG, \mu)_{J'_\mu\text{-}\irr} \neq \emptyset$, and we have a natural bijection $B(\bM_{J_{\mu}'}, \mu)_{\irr} \cong B(\bG, \mu)_{J'_\mu\text{-}\irr}$. 

Let $J_0(w)$ be the subset of $\BS$ with $\mu - \nu_{b_w}\in \sum_{i\in J_0(w)} \BR_{>0} \a_i^{\vee}$. Then $J_0(w) = J_0(w)'_\mu$. By definition, $[\dot w] \le [b_w]$. Thus $\nu_{\dot w} \le \nu_{b_w}$ and $J_0(w) \subseteq J(w)$.  

For any $\s$-stable $J_1,J_2\subseteq \BS$, denote by $[J_1,J_2] $ the set of $\s$-stable subsets $J\subseteq \BS$ such that $J_1\subseteq J \subseteq J_2$ and denote by $[J_1,J_2]_{\mu} $ the set of $J\in [J_1,J_2]$ such that $\mu$ is essentially non-central in $J$ (or equivalently, $J=J_{\mu}'$).

Now we state the main result of this paper. 

\begin{theorem}\label{main}
Let $w \in Wt^{\mu}W $ such that $\eta_\s(w)$ is a partial $\s$-Coxeter element. Then 
\begin{enumerate}
    \item $w$ is a cordial element;
    \item $B(\bG)_{w}=\sqcup_{J\in [J_0(w),J(w)]_{\mu}} B(\bG, \mu)_{J\text{-}\irr}$;
    \item for any $[b] \in B(\bG)_w$, we have $$X_{w}(b) \cong \bJ_b(F) \times^{P} Y,$$ where $P$ is a parahoric subgroup of $\bJ_b(F)$, and $Y$ is an iterated fibration over (the perfection of) a classical Deligne--Lusztig variety of Coxeter type for some connected reductive group $\bH$ over $\BF_q$ with $\bH(\BF_q)$ isomorphic to the reductive quotient of $P$. 
\end{enumerate}
\end{theorem}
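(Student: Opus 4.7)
The plan is to use the Deligne--Lusztig reduction method of Proposition~\ref{DLReduction1} to build a \emph{reduction tree} rooted at $w$ whose leaves are minimal length elements $w'$ in their $\s$-conjugacy classes. At each such leaf, \S\ref{sec:classicalDL}(a) identifies $X_{w'}(b)$ with $\bJ_b(F)\times^P X^{\bH}_{\eta_\s(w')}$, so each root-to-leaf path produces a $\bJ_b(F)$-orbit of locally closed pieces in $X_w(b)$, each an iterated $\BA^1$- or $\BG_m$-fibration over such a classical Deligne--Lusztig variety. The main task is to analyze the leaves under the hypothesis that $\eta_\s(w)$ is a partial $\s$-Coxeter element, and then to assemble the pieces into the three statements.

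First I would show that the partial $\s$-Coxeter property of the finite part propagates along all reduction steps: every leaf $w'$ has $\eta_\s(w')$ again a partial $\s$-Coxeter element supported on some $\s$-stable $J\subseteq J(w)$, and the corresponding $[b]=[\dot w']$ lies in the image of $B(\bM_{J_\mu'},\mu)_{\irr}$ under the map of Lemma~\ref{Newton}. Combined with Corollary~\ref{cor:irr}, this identifies the classes appearing as leaves with $\sqcup_{J\in[J_0(w),J(w)]_{\mu}}B(\bG,\mu)_{J\text{-}\irr}$, giving~(2). For $[b]=[b_w]$ the leaves correspond to the maximal $J=J(w)$, and comparing Chai's length formula \S\ref{sec:2.2}(a) with the virtual-dimension identity \S\ref{sec:cordial}(a) yields $\ell(w)-\ell(\eta_\s(w))=\<\nu_{b_w},2\rho\>-\de(b_w)$, which is precisely the cordiality condition~(1). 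Theorem~\ref{saturated} then supplies $X_w(b)$ equi-dimensional of dimension $d_w(b)$ for every $[b]\in B(\bG)_w$, and together with the fibration structure above, the geometric content of~(3) reduces to the \emph{multiplicity one} statement: for each $[b]\in B(\bG)_w$, exactly one leaf $w'$ of the reduction tree satisfies $[\dot w']=[b]$.

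Multiplicity one is immediate for $[b]=[b_w]$, and in the basic case follows from a direct analysis showing that any reduction path ending at the basic class is of a unique type. For general $[b]$ the plan is to interpret the class polynomials as counts of $\BF_q$-points of suitable admissible subsets of $X_w(b)$ and to exploit their non-negative integral coefficients, thereby translating multiplicity one into the single combinatorial identity
\[
\sum_{[b]\in B(\bG,\mu)_{\indec}}(\bq-1)^{?}\bq^{-??}=1,
\]
with exponents determined by $w$ and $b$ as in \S\ref{sec:HN}. To verify this identity I would construct natural bijections among the sets $B(\bG,\mu)_{\indec}$ for varying $(\bG,\mu)$, reducing to the case where $\bG$ is simply laced, $\breve F$-simple and split and $\mu$ is a fundamental coweight. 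For classical types one reduces further to minuscule $\mu$ and invokes the Chen--Zhu conjecture; exceptional types will be handled by direct computer verification, the case $(E_8,\omega^\vee_4)$ with its $729$-term sum being the main computational hurdle. I expect this identity to be the principal obstacle: for extremal $[b]$ it is transparent, but in the intermediate range its proof rests essentially on the Chen--Zhu input together with careful case-by-case reductions, and no purely combinatorial argument is apparent even in type $A$.
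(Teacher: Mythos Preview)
Your outline for part~(3)---reduction trees, multiplicity one via a combinatorial identity, reduction to simply-laced $\breve F$-simple split groups with fundamental $\mu$, the minuscule case via Chen--Zhu, and computer verification for exceptional types---tracks the paper's argument closely. That part is fine.

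The gap is in your treatment of~(1) and~(2). You propose to show that the partial $\s$-Coxeter property of $\eta_\s$ propagates along \emph{all} reduction steps and then read off~(2) from the leaves and~(1) from the leaf for $[b_w]$. But reduction trees use conjugation by $s_i$ for $i\in\tilde\BS$, including the affine reflection, and the paper only establishes propagation of the partial $\s$-Coxeter property for $i\in\BS$ (Lemma~\ref{par_cox}); there is no claim, and no obvious reason, that $\eta_\s(s_0 w)$ or $\eta_\s(s_0 w\s(s_0))$ remains partial $\s$-Coxeter. Moreover, the ``Coxeter'' structure the paper eventually proves for the end points (Theorem~\ref{thm:main}(2), Theorem~\ref{7.1}(2)) is in the sense of standard quadruples $(J,x,\breve K,C)$ of \S5.1, not the statement that $\eta_\s$ of the leaf is a partial $\s$-Coxeter element of $W$; and establishing that $\breve K$ is very special requires the Chen--Zhu theorem (\S\ref{chen-zhu}), not mere propagation.

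The paper instead proves~(1) and~(2) \emph{before} any reduction-tree analysis, by a separate argument in \S\ref{sec:cor}: first for $w=t^\mu c\in{}^{\BS}\tW$, where Hodge--Newton indecomposability forces $B(\bG)_w\subseteq B(\bG,\mu)_{\indec}$, and the identification $[b_w]=[b_{\mu,\bG\text{-}\indec}]$ (Proposition~\ref{prop:max-b}, Corollary~\ref{cor:max-b}) together with the bound \S\ref{sec:cordial}(b) gives cordiality; then the general case follows by induction on $\ell(x)$ via partial conjugation (finite reflections only). Only after~(1) and~(2) are in hand does the paper turn to reduction trees, using cordiality and the dimension bound to control $\ell_I(\underline p)$, $\ell_{II}(\underline p)$ and to force the end points into Coxeter classes (\S\ref{bound}--\S5.5). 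Your plan reverses this dependency and rests on a propagation claim that is neither proved nor used in the paper; you should restructure to establish~(1) and~(2) independently along the lines of \S\ref{sec:cor}.
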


\begin{remark}\leavevmode
\begin{enumerate}
\item In particular, $\bJ_b(F)$ acts transitively on the set of irreducible components of $X_{w}(b)$. 
\item We also have an explicit description of the parahoric subgroup $P$ of $\bJ_b(F)$ in Theorem~\ref{main}(3) (see \S\ref{sec:final}). In particular, if $w \in {}^{\BS} \tW$ and $\eta_\s(w)$ is a (full) $\s$-Coxeter element, then the parahoric subgroup $P$ is very special. 
\end{enumerate}
\end{remark}

Parts (1) and (2) will be proved in \S\ref{sec:cor}. Part~(3) is the most difficult part of this paper and will be proved in \S\S\ref{sec:red} and~\ref{sec:final}. The proof is based on a deep analysis of the reduction tree of $w$, which will be introduced in \S\ref{sec:tr}.

\section{Class polynomials and reduction trees}\label{sec:tr}

In this section, we recall the class polynomials of  Hecke algebras and the connection with  affine Deligne--Lusztig varieties discovered in \cite{He14}. We then introduce the reduction tree, which encodes more information than the class polynomials. 

\subsection{Hecke algebras and their cocenters}
Let $\bq$ be an indeterminate. Let $H$ be the Hecke algebra associated with $\tW$, that is, it is the $\BZ[\bq^{\pm 1}]$-algebra generated by $T_w$ for $w \in \tW$ subject to the relations
\begin{itemize}
    \item $T_w T_{w'}=T_{w w'}$ if $\ell(w w')=\ell(w)+\ell(w')$;
    \item $(T_{s_i}+1)(T_{s_i}-\bq)=0$ for $i \in \tilde \BS$. 
\end{itemize} 

The action of $\s$ on $\tW$ induces an action on $H$, which we still denote by $\s$. The {\it $\s$-commutator} $[H, H]_{\s}$ is the $\BZ[\bq^{\pm 1}]$-submodule generated by $h h'-h' \s(h)$ for $h, h' \in H$. The {\it $\s$-cocenter} of $H$ is defined to be $$\bar H_\s=H/[H, H]_\s.$$

By Theorem~\ref{min}(2), for any $\s$-conjugacy class $\CO$ of $\tW$ and $w, w' \in \CO_{\min}$, we have $T_w+[H, H]_\s=T_{w'}+[H, H]_\s$. We write $T_\CO=T_w+[H, H]_\s \in \bar H_\s$ for any $w \in \CO_{\min}$. It is proved in \cite[Theorem 6.7]{HN} that

\begin{theorem}
\label{basis}
$\bar H_\s$ is a free $\BZ[\bq^{\pm 1}]$-module with basis $\{T_\CO\}_{\CO \in B(\tW, \s)}$. 
\end{theorem}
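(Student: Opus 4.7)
The plan is to prove spanning and linear independence of $\{T_\CO\}_{\CO\in B(\tW,\s)}$ separately, with Theorem~\ref{min} providing the combinatorial backbone. For well-definedness of $T_\CO$, first I would check that if $w,w'\in\tW$ are elementarily strongly $\s$-conjugate of the same length, say $w' = xw\s(x)^{-1}$ with $\ell(xw)=\ell(x)+\ell(w)$, then $T_{w'} \equiv T_w \pmod{[H,H]_\s}$. Indeed, the length conditions give $T_xT_w = T_{xw} = T_{w'}T_{\s(x)}$ in $H$, so $T_{w'} = T_xT_wT_{\s(x)}^{-1}$ (invertibility of $T_{\s(x)}$ over $\BZ[\bq^{\pm1}]$ being standard); applying the defining identity $T_x Y - Y\s(T_x) \in [H,H]_\s$ with $Y = T_wT_{\s(x)}^{-1} \in H$ and $\s(T_x) = T_{\s(x)}$ yields $T_{w'} - T_w \in [H,H]_\s$ directly. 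The symmetric elementary case is parallel, and by Theorem~\ref{min}(2) this makes $T_\CO := T_w + [H,H]_\s$ independent of $w\in\CO_{\min}$. Spanning then follows by downward induction on $\ell(w)$: by Theorem~\ref{min}(1) there is a chain $w = w_0 \xrightarrow{s_{i_1}}_\s \cdots \xrightarrow{s_{i_n}}_\s w_n\in \CO_{\min}$, and at each step either $\ell(w_k) = \ell(w_{k-1})$, so $T_{w_{k-1}} \equiv T_{w_k}$ by the above, or $\ell(w_k) = \ell(w_{k-1}) - 2$, in which case $T_{w_{k-1}} = T_{s_{i_k}}T_u \equiv T_u T_{\s(s_{i_k})}$ (with $u = s_{i_k}w_{k-1}$) combined with $T_u = T_{u\s(s_{i_k})}T_{\s(s_{i_k})}$ and the quadratic relation $T_{\s(s_{i_k})}^2 = (\bq-1)T_{\s(s_{i_k})}+\bq$ yields
\[
T_{w_{k-1}} \equiv (\bq-1)\,T_{s_{i_k}w_{k-1}} + \bq\,T_{s_{i_k}w_{k-1}\s(s_{i_k})} \pmod{[H,H]_\s},
\]
with both summands of strictly smaller length. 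Induction then expresses $T_w$ as a $\BZ[\bq^{\pm1}]$-linear combination of the $T_\CO$.

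For linear independence, I plan to construct, for each $w\in\tW$ and $\CO\in B(\tW,\s)$, a class polynomial $f_{w,\CO}\in\BZ[\bq^{\pm1}]$ satisfying $T_w \equiv \sum_\CO f_{w,\CO}T_\CO \pmod{[H,H]_\s}$ together with $f_{w',\CO'} = \delta_{\CO,\CO'}$ whenever $w'\in\CO'_{\min}$. The $f_{w,\CO}$ are built by iterating the two reduction moves of the previous paragraph, organized as a finite binary tree whose leaves lie in $\bigsqcup_\CO \CO_{\min}$, with $f_{w,\CO}$ set equal to the sum over leaves in $\CO$ of the monomial $\bq^a(\bq-1)^b$ accumulated along the path. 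Once such a family is in hand, independence is immediate: the assignments $T_w \mapsto f_{w,\CO}$ extend $\BZ[\bq^{\pm1}]$-linearly to functionals $\psi_\CO: H \to \BZ[\bq^{\pm1}]$ which vanish on $[H,H]_\s$ and satisfy $\psi_\CO(T_{\CO'}) = \delta_{\CO,\CO'}$, so any relation $\sum a_\CO T_\CO = 0$ in $\bar H_\s$ forces $a_\CO = 0$ for all $\CO$.

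The main obstacle is showing that $f_{w,\CO}$ is independent of the choices made in constructing the reduction tree --- two different orderings of the reduction moves could a priori yield different polynomials. I would handle this by induction on $\ell(w)$: whenever two distinct reduction moves are applicable at some intermediate element, the braid and commutation relations among the simple reflections in $\tilde\BS$, the quadratic relation, and the strong $\s$-conjugation identifications established in the well-definedness step are used to diamond-complete both moves to a common collection of strictly shorter elements, where the inductive hypothesis applies. This is the technical heart of the argument; once it is in place, the construction above completes the proof that $\bar H_\s$ is free over $\BZ[\bq^{\pm1}]$ on $\{T_\CO\}_{\CO\in B(\tW,\s)}$.
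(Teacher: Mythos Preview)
The paper does not give its own proof of this statement; it is quoted from \cite[Theorem~6.7]{HN}. Your well-definedness and spanning arguments are correct and are the standard opening moves.

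There is a minor gap in the independence half that is easily repaired: you assert that the functionals $\psi_\CO$ vanish on $[H,H]_\s$ without justification. This does not follow from well-definedness of $f_{w,\CO}$ alone; one also needs the (easy) observation that $[H,H]_\s$ is spanned over $\BZ[\bq^{\pm1}]$ by the elements $T_w-T_{s_iw\s(s_i)}$ when $\ell(s_iw\s(s_i))=\ell(w)$, the elements $T_w-(\bq-1)T_{s_iw}-\bq T_{s_iw\s(s_i)}$ when $\ell(s_iw\s(s_i))=\ell(w)-2$, and the elements $T_w-T_{\tau w\s(\tau)^{-1}}$ for length-zero $\tau$. Your recursions then kill each generator.

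The substantive problem is the diamond lemma itself. Once spanning is known, well-definedness of the $f_{w,\CO}$ is \emph{equivalent} to linear independence of the $T_\CO$: independence forces the expansion coefficients of $T_w$ in the $T_\CO$ to be unique, hence reduction-independent; conversely, well-definedness yields separating functionals as you describe. So what you isolate as ``the main obstacle'' is not a simplification of the problem but a restatement of it, and your confluence sketch (``braid and commutation relations \dots\ diamond-complete both moves'') contains no more content than a direct attack on independence. In the affine setting, where $\s$-conjugacy classes are infinite, I am not aware that such a confluence argument has been carried out. The proof in \cite{HN} proceeds by a different route: the cocenter is analysed via the structure theory of \emph{straight} $\s$-conjugacy classes and an inductive reduction to proper Levi subgroups, which ultimately brings in the finite case. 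The uniqueness of the class polynomials $F_{w,\CO}$ --- precisely what your diamond lemma would establish --- is then a \emph{consequence} of the basis theorem, and the present paper uses it in exactly that order in the paragraph immediately following the statement.
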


Moreover, by \cite[\S 2.3]{He14} and \cite[\S 2.8.2]{He-CDM}, for any $w \in \tW$ and $\CO \in B(\tW, \s)$, there exists a unique polynomial $F_{w, \CO} \in \BN[q-1]$ such that $$T_w+[H, H]_\s=\sum_{\CO \in B(\tW, \s)} F_{w, \CO} T_\CO \in \bar H_\s.$$
The polynomials $F_{w, \CO}$ are called  {\it class polynomials}\footnote{The polynomials we use here coincide with those in \cite{He-CDM} and differ from the polynomials used in \cite{He14} by a certain monomial. See \cite[footnote on p. 106]{He-CDM}.}.

\subsection{Class polynomials and affine Deligne--Lusztig varieties}

The class polynomials encode a lot of information about  affine Deligne--Lusztig varieties. 

Let $\tW \to B(\bG)$ be the map sending $w \in \tW$ to the $\s$-conjugacy class $[\dot w]$ of $\breve G$. It is known that this map is independent of the choice of the representative $\dot w$ of $w$, and it induces a map $$\Psi: B(\tW, \s) \to B(\bG).$$ By \cite[Theorem 3.7]{He14}, the map $\Psi$ is surjective. 

Let $w \in \tW$ and $[b] \in B(\bG)$. We set $$F_{w, [b]}=\sum_{\CO \in B(\tW, \s); \Psi(\CO)=[b]} \bq^{\ell(\CO)} F_{w, \CO} \in \BN[\bq-1].$$ Here $\ell(\CO)=\ell(x)$ for any $x \in \CO_{\min}$. 

The following ``dimension = degree'' theorem is established in \cite[Theorem 6.1]{He14}. 

\begin{theorem}
Let $w \in \tW$ and $b \in \breve G$. Then $\dim X_w(b)=\deg F_{w, [b]} - \<\nu_b,2\rho\>$. 
\end{theorem}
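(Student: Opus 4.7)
The plan is to prove the identity by induction on $\ell(w)$, tracking geometry and algebra in parallel. Theorem~\ref{min} reduces $w$ inside its $\s$-conjugacy class $\CO$ to an element of minimal length via a chain of elementary $\s$-conjugations by simple reflections; geometrically, each step is controlled by the Deligne--Lusztig reduction of Proposition~\ref{DLReduction1}, while algebraically the same chain yields a matching reduction for $T_w\pmod{[H,H]_\s}$ in the cocenter $\bar H_\s$, which via the basis of Theorem~\ref{basis} translates into a recursion for the class polynomial $F_{w,[b]}$. Empty varieties and the zero polynomial are assigned degree/dimension $-\infty$.

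The base case $w\in\CO_{\min}$ is settled by the structure recalled in \S\ref{sec:classicalDL}: $X_w(b)\ne\emptyset\iff[\dot w]=[b]$, and in that case the fibration $X_w(b)\cong\bJ_b(F)\times^P X$ over a classical Deligne--Lusztig variety gives $\dim X_w(b)=\ell(w)-\<\nu_b,2\rho\>$; meanwhile Theorem~\ref{basis} yields $T_w+[H,H]_\s=T_\CO$, so $F_{w,[b]}=\bq^{\ell(w)}$ when $\Psi(\CO)=[b]$ and $0$ otherwise. For the inductive step with $w\notin\CO_{\min}$, Theorem~\ref{min} produces $v$ with $w\sim_\s v$ and some $s_j\in\tilde{\BS}$ satisfying $\ell(s_j v\s(s_j))=\ell(v)-2$. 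Iterating Proposition~\ref{DLReduction1}(1) along the chain $w\sim_\s v$ yields $\dim X_w(b)=\dim X_v(b)$, and the corresponding fact that $T_w$ and $T_v$ represent the same class in $\bar H_\s$ gives $F_{w,[b]}=F_{v,[b]}$. Proposition~\ref{DLReduction1}(2) then gives
\[
\dim X_v(b)=\max\bigl(\dim X_{s_j v}(b),\,\dim X_{s_j v\s(s_j)}(b)\bigr)+1,
\]
while on the algebraic side, $\ell(s_j v)=\ell(v)-1$ gives $T_v=T_{s_j}T_{s_j v}\equiv T_{s_j v}T_{\s(s_j)}\pmod{[H,H]_\s}$, and $\ell(s_j v\s(s_j))=\ell(s_j v)-1$ unfolds via the quadratic relation to
\[
T_v\equiv(\bq-1)\,T_{s_j v}+\bq\,T_{s_j v\s(s_j)}\pmod{[H,H]_\s},
\]
whence $F_{v,[b]}=(\bq-1)F_{s_j v,[b]}+\bq\,F_{s_j v\s(s_j),[b]}$.

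The principal obstacle, and the reason the statement has content, is upgrading this algebraic recursion to one of \emph{degrees}: a priori, cancellation in the sum $(\bq-1)F_{s_j v,[b]}+\bq\,F_{s_j v\s(s_j),[b]}$ could drop the degree below $\max(\deg F_{s_j v,[b]},\deg F_{s_j v\s(s_j),[b]})+1$. This is precisely where the positivity property $F_{\cdot,\CO}\in\BN[\bq-1]$ intervenes: both summands expand in $\bq$ with non-negative coefficients, so the leading terms cannot cancel, giving $\deg F_{v,[b]}=\max(\deg F_{s_j v,[b]},\deg F_{s_j v\s(s_j),[b]})+1$. Combined with the geometric recursion and the induction hypothesis applied to the strictly shorter $s_j v$ and $s_j v\s(s_j)$, this closes the induction and yields $\dim X_w(b)=\deg F_{w,[b]}-\<\nu_b,2\rho\>$.
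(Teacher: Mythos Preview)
Your inductive strategy is correct and is essentially the argument of \cite[Theorem~6.1]{He14}, which the present paper cites without reproducing. Two points deserve tightening.

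First, in the base case you invoke \S\ref{sec:classicalDL}(a) for the formula $\dim X_w(b)=\ell(w)-\<\nu_b,2\rho\>$, but that passage records only the isomorphism $X_w(b)\cong\bJ_b(F)\times^P X$, not the dimension. The dimension formula requires the finer decomposition of a minimal-length element into a straight factor of length $\<\nu_b,2\rho\>$ and a finite factor giving the classical Deligne--Lusztig variety; this is in \cite[Theorem~4.8]{He14} and should be cited.

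Second, your non-cancellation step has a slip in the justification. You assert that $(\bq-1)F_{s_jv,[b]}$ and $\bq\,F_{s_jv\s(s_j),[b]}$ ``expand in $\bq$ with non-negative coefficients''; this is false (already $(\bq-1)^2=\bq^2-2\bq+1$). The correct argument stays in $\BN[\bq-1]$: writing $\bq=(\bq-1)+1$ gives
\[
(\bq-1)F_1+\bq\,F_2=(\bq-1)(F_1+F_2)+F_2\in\BN[\bq-1],
\]
and for elements of $\BN[\bq-1]$ the degree in $\bq-1$ (which equals the degree in $\bq$) of a sum of nonzero terms is the maximum of the degrees. This yields $\deg F_{v,[b]}=\max(\deg F_{s_jv,[b]},\deg F_{s_jv\s(s_j),[b]})+1$ as you need, and the induction closes. (A notational quibble: the length-preserving chain from $w$ to $v$ is $\approx_\s$, not $\sim_\s$; Proposition~\ref{DLReduction1}(1) applies to the former.)
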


\begin{remark}
Here, by convention, $\dim \emptyset=\deg 0=-\infty$. 
\end{remark}

Moreover, we have the following ``leading coefficients = irreducible components'' theorem. This is established in \cite[Theorem 2.19]{He-CDM}. See also \cite[Theorem 3.3.9 and Corollary 3.3.11]{HZZ}. 

\begin{theorem}
Let $w \in \tW$ and $b \in \breve G$. Then the cardinality of $\bJ_b(F) \backslash \Sigma^{\mathrm{top}}(X_w(b))$ equals the leading coefficient of $F_{w, [b]}$. 
\end{theorem}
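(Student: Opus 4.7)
The plan is to induct on $\ell(w)$, running the Deligne--Lusztig reduction (Proposition~\ref{DLReduction1}) on the geometric side in lockstep with the class polynomial recursion on the Hecke algebra side, and at each step matching the leading coefficient of $F_{w,[b]}$ to the count of top-dimensional $\bJ_b(F)$-orbits.

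For the base case, take $w \in \CO_{\min}$ for its $\s$-conjugacy class $\CO$. Theorem~\ref{basis} gives $T_w + [H,H]_\s = T_\CO$, so $F_{w,\CO'} = \delta_{\CO,\CO'}$, and therefore $F_{w,[b]} = \bq^{\ell(w)}$ when $[b] = \Psi(\CO)$ (and $F_{w,[b]} = 0$ otherwise, matching $X_w(b) = \emptyset$ in that case by \cite[Theorem~4.8]{He14}). On the geometric side, \S\ref{sec:classicalDL}(a) gives $X_w(b) \cong \bJ_b(F) \times^P X$, where $X$ is a classical Deligne--Lusztig variety for a connected reductive $\bH$ whose $\BF_q$-points are a quotient of $P$; all its irreducible components share a common dimension and $\bH(\BF_q)$ permutes them transitively by fact~(a) of the introduction, so $\bJ_b(F)$ acts transitively on $\Sigma^{\mathrm{top}}(X_w(b))$, giving the orbit count $1 = $ the leading coefficient of $\bq^{\ell(w)}$.

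For the inductive step, when $w$ is not of minimal length, Theorem~\ref{min}(1) produces a chain of $\s$-conjugations $w = w_0, w_1, \ldots$ along simple reflections terminating in $\CO_{\min}$, with lengths non-increasing; let $j$ be the first index where $\ell(w_j) = \ell(w_{j-1}) - 2$. The intermediate length-preserving steps each give a $\bJ_b(F)$-equivariant universal homeomorphism between successive affine Deligne--Lusztig varieties by Proposition~\ref{DLReduction1}(1), so both the top-orbit count and the polynomial $F_{\,\cdot\,,[b]}$ are invariant; we may therefore replace $w$ by $w_{j-1}$ and assume $\ell(s_i w \s(s_i)) = \ell(w) - 2$ for some $i \in \tilde \BS$. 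Proposition~\ref{DLReduction1}(2) then decomposes $X_w(b) = X_1 \sqcup X_2$ with $X_1 \to X_{s_i w}(b)$ a $\bJ_b(F)$-equivariant, Zariski-locally trivial $\BG_m^{(\mathrm{perf})}$-bundle and $X_2 \to X_{s_i w \s(s_i)}(b)$ a $\bJ_b(F)$-equivariant, Zariski-locally trivial $\BA^{1,(\mathrm{perf})}$-bundle. Meanwhile, using $T_w = T_{s_i} T_{s_i w}$ (since $\ell(s_i w) = \ell(w) - 1$ automatically in the length-drop case), the $\s$-commutator relation $T_{s_i} T_{s_i w} \equiv T_{s_i w} T_{\s(s_i)}$ modulo $[H,H]_\s$, and the quadratic relation $T_{\s(s_i)}^2 = (\bq-1)T_{\s(s_i)} + \bq$, one derives
\[
F_{w,[b]} \;=\; \bq\, F_{s_i w \s(s_i),[b]} \;+\; (\bq - 1)\, F_{s_i w,[b]}.
\]

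To conclude, since each bundle above has irreducible and geometrically connected fibers, $\Sigma^{\mathrm{top}}(X_j)$ is in $\bJ_b(F)$-equivariant bijection with $\Sigma^{\mathrm{top}}$ of its base, with dimensions raised by $1$. The ``dimension $=$ degree'' theorem then identifies $\deg\bigl((\bq-1) F_{s_i w,[b]}\bigr) = \dim X_1 + \<\nu_b,2\rho\>$ and $\deg\bigl(\bq\, F_{s_i w\s(s_i),[b]}\bigr) = \dim X_2 + \<\nu_b,2\rho\>$, with respective leading coefficients equal to those of $F_{s_i w,[b]}$ and $F_{s_i w\s(s_i),[b]}$ (since $\bq - 1$ and $\bq$ are monic). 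The top-dimensional $\bJ_b(F)$-orbits on $X_w(b)$ come from whichever of $X_1, X_2$ has maximal dimension (from both, if the dimensions are equal), and the inductive hypothesis applied to $s_i w$ and $s_i w \s(s_i)$ matches those counts with the leading coefficients of the corresponding class polynomials, whose sum is the leading coefficient of $F_{w,[b]}$. The chief technical point to verify is that the bundle structure really induces a \emph{$\bJ_b(F)$-equivariant} bijection on irreducible component sets, which follows from the Zariski-local triviality together with the irreducibility of the fibers and the $\bJ_b(F)$-equivariance of the bundle projection, both built into Proposition~\ref{DLReduction1}(2); a secondary bookkeeping point is ensuring that a tie in dimensions between $X_1$ and $X_2$ does not cause unexpected cancellation when summing leading coefficients, which is guaranteed by the positivity $F_{w,\CO} \in \BN[\bq-1]$ from the cocenter theory.
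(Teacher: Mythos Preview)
The paper does not give its own proof of this statement; it is quoted from \cite[Theorem~2.19]{He-CDM} (see also \cite[Theorem~3.3.9 and Corollary~3.3.11]{HZZ}). Your argument via induction on $\ell(w)$ and Deligne--Lusztig reduction is correct and is precisely the approach of those references: the base case is handled by \S\ref{sec:classicalDL}(a) together with transitivity of the finite group on the components of a classical Deligne--Lusztig variety, and the inductive step matches the recursion $F_{w,[b]}=(\bq-1)F_{s_iw,[b]}+\bq\,F_{s_iw\s(s_i),[b]}$ against the open/closed decomposition of Proposition~\ref{DLReduction1}(2), using the ``dimension $=$ degree'' theorem and positivity to rule out cancellation.

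One small point you assert but should make explicit: in the tie case $\dim X_1=\dim X_2=d$, you need that no top-dimensional component $Z$ of the closed piece $X_2$ is swallowed by the closure $\bar Y$ of a top-dimensional component $Y$ of the open piece $X_1$; otherwise the orbit counts would not simply add. This is immediate once stated: $Z\subseteq\bar Y$ with both irreducible of dimension $d$ forces $Z=\bar Y$, but $\bar Y$ meets $X_1$ while $Z\subseteq X_2$, a contradiction. With this observation, $\Sigma^{\mathrm{top}}(X_w(b))$ is the disjoint union of the closures of $\Sigma^{\mathrm{top}}(X_1)$ and of $\Sigma^{\mathrm{top}}(X_2)$ (whichever attain the maximum), and your count goes through.
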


Although not needed in this paper, it is also worth mentioning that in the superbasic case, the class polynomial gives the number of rational points of an affine Deligne--Lusztig variety. This is established in \cite[Proposition 8.3]{He14}. 

\begin{proposition}
Suppose that the residue field of $F$ is $\BF_q$. Assume that $\bG=PGL_n$ split over $F$ and $b \in \bG(F)$ is a superbasic element in $\breve G$. Then $$\sharp X_w(b)^\s=n F_{w, [b]}\vert_{\bq=q}.$$
\end{proposition}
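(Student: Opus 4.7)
The plan is to combine Deligne--Lusztig reduction (Proposition~\ref{DLReduction1}) with the parallel recursion on the Hecke side, reducing the statement to a base case on minimal-length $\sigma$-conjugacy class representatives that can be analysed via the explicit model $X_{w'}(b)\cong\bJ_b(F)\times^P X$ of \S\ref{sec:classicalDL}(a).

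First, I would set up the $\sigma$-fixed-point recursion from Proposition~\ref{DLReduction1}. In the length-preserving case $\ell(s_iw\sigma(s_i))=\ell(w)$ one has $\sharp X_w(b)^\sigma=\sharp X_{s_iw\sigma(s_i)}(b)^\sigma$, since universal homeomorphisms are bijective on $\BF_q$-points (in mixed characteristic the passage to perfections preserves $\BF_q$-points). In the length-decreasing case $\ell(s_iw\sigma(s_i))=\ell(w)-2$, the standard $\sigma$-fixed-point counts for Zariski-locally trivial $\BG_m$- and $\BA^1$-bundles give
$$
\sharp X_w(b)^\sigma=(q-1)\,\sharp X_{s_iw}(b)^\sigma+q\,\sharp X_{s_iw\sigma(s_i)}(b)^\sigma.
$$

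On the Hecke side, the quadratic relation $(T_{s_i}+1)(T_{s_i}-\bq)=0$ together with $T_w=T_{s_i}T_{s_iw}$ (when $\ell(s_iw)=\ell(w)-1$) yields, modulo $[H,H]_\sigma$, the matching identity $T_w\equiv(\bq-1)T_{s_iw}+\bq T_{s_iw\sigma(s_i)}$ in the length-decreasing case and $T_w\equiv T_{s_iw\sigma(s_i)}$ in the length-preserving case (replace $T_{s_i}T_{s_iw}$ by $T_{s_iw}T_{\sigma(s_i)}$ and then use the quadratic relation). Expanding in the basis $\{T_\CO\}$ of Theorem~\ref{basis}, multiplying by $\bq^{\ell(\CO)}$ and summing over $\Psi(\CO)=[b]$, we obtain the identical recursion for $F_{w,[b]}|_{\bq=q}$; the constant $n$ in front is insensitive to these reductions. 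By Theorem~\ref{min}(1) the recursions terminate at a minimal-length element $w'\in\CO_{\min}$, where $F_{w',\CO'}=\delta_{\CO,\CO'}$, so the entire identity reduces to
$$
\sharp X_{w'}(b)^\sigma=\begin{cases} n\,q^{\ell(w')}, & \Psi(\CO)=[b],\\ 0, & \text{otherwise.}\end{cases}
$$
The vanishing half is the nonemptiness criterion for $X_{w'}(b)$ cited in \S\ref{sec:classicalDL}.

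For the non-vanishing base case I would use $X_{w'}(b)\cong\bJ_b(F)\times^P X$ from \S\ref{sec:classicalDL}(a). For superbasic $b\in PGL_n(F)$, one identifies $\bJ_b(F)=D^\times/F^\times$ for the central division algebra $D/F$ of invariant $\kappa(b)/n$, $P$ with the image of $\mathcal{O}_D^\times$, and $X$ with a Coxeter-type classical Deligne--Lusztig variety on the anisotropic reductive quotient of $P$, whose $\BF_q$-point count is $q^{\ell(w')}$. A direct Lang--Steinberg count on $\bJ_b(F)\times^P X$ then produces the remaining factor $n$, matching $|\pi_1(PGL_n)_\sigma|=n$. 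The main obstacle is this last step: carrying out the explicit identifications of $\bJ_b(F)$, $P$ and $X$ in the superbasic $PGL_n$ setting and the Lang--Steinberg count that simultaneously produces both $n$ (from the component structure) and $q^{\ell(w')}$ (from the Coxeter DL variety). Once the base case is established, the paired recursions propagate the identity to every $w\in\tW$.
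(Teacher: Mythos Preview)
The paper does not prove this proposition; it merely records it as ``established in \cite[Proposition 8.3]{He14}'' and moves on. So there is no in-paper proof to compare against. That said, your overall strategy---match the Deligne--Lusztig reduction recursion on $\sharp X_w(b)^\sigma$ with the parallel recursion on $F_{w,[b]}$, then reduce via Theorem~\ref{min}(1) to minimal-length $w'$---is exactly the shape of the argument in \cite{He14}, and your Hecke-side manipulation yielding $T_w\equiv(\bq-1)T_{s_iw}+\bq T_{s_iw\sigma(s_i)}$ in $\bar H_\sigma$ is correct.

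Two points deserve care. First, Proposition~\ref{DLReduction1} as stated only asserts $\bJ_b(F)$-equivariance, not $\sigma$-equivariance, of the decomposition and of the bundle maps; to pass to $\sigma$-fixed-point counts you need these to be defined over $\BF_q$. This is true because the constructions (as carried out in \cite{GHKR10,He14}) are explicit manipulations with Iwahori double cosets and hence defined over $F$ once $b\in\bG(F)$, but it should be said. Second, your base-case plan via $X_{w'}(b)\cong\bJ_b(F)\times^P X$ is more delicate than it looks: this isomorphism is not a priori $\sigma$-equivariant, so extracting $\sharp X_{w'}(b)^\sigma$ from the right-hand side requires knowing how $\sigma$ acts across the identification, and your claim that the classical Deligne--Lusztig variety $X$ contributes exactly $q^{\ell(w')}$ rational points is not justified (indeed, for $w\neq 1$ a classical Deligne--Lusztig variety has \emph{no} $\BF_q$-rational points in the naive sense). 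In \cite{He14} the base case is handled by a direct point count on $\breve I\dot w'\breve I$ in the superbasic $PGL_n$ situation rather than through the $\bJ_b(F)\times^P X$ model; you may find that route cleaner than trying to make the latter model $\sigma$-equivariant.
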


\subsection{An identity on the class polynomials}

We have the following identity on the class polynomials. We first give a proof using  representations of Hecke algebras. Then we  provide a geometric interpretation of this identity. 

\begin{proposition}\label{prop:id}
Let $w \in \tW$. Then $$\bq^{\ell(w)}=\sum_{\CO \in B(\tW, \s)} \bq^{\ell(\CO)} F_{w, \CO}=\sum_{[b] \in B(\bG)} F_{w, [b]}.$$
\end{proposition}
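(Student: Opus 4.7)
\medskip
\noindent\textbf{Proof proposal.}

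The plan is to prove the first equality $\bq^{\ell(w)}=\sum_{\CO} \bq^{\ell(\CO)} F_{w,\CO}$ by applying the one-dimensional ``index'' character of the Hecke algebra to the cocenter decomposition; the second equality $\sum_{\CO} \bq^{\ell(\CO)} F_{w,\CO}=\sum_{[b]} F_{w,[b]}$ is then a formal consequence of the definition of $F_{w,[b]}$ together with the fact that every $\s$-conjugacy class $\CO$ in $B(\tW,\s)$ maps to some $[b]\in B(\bG)$ via the (surjective) map $\Psi$, so one can regroup the sum on the left according to fibers of $\Psi$.

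For the first identity, define the algebra homomorphism $\chi\colon H\to \BZ[\bq^{\pm 1}]$ by $\chi(T_{s_i})=\bq$ for every $i\in\tilde\BS$. This is well defined because the quadratic relation becomes $(\bq+1)(\bq-\bq)=0$, and the braid relations are automatic since the target is commutative. Using $T_w=T_{s_{i_1}}\cdots T_{s_{i_r}}$ for any reduced expression, one immediately gets $\chi(T_w)=\bq^{\ell(w)}$ for all $w\in\tW$.

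Next I check that $\chi$ descends to the $\s$-cocenter $\bar H_{\s}$. Given $h,h'\in H$, commutativity of the target yields $\chi(hh')=\chi(h)\chi(h')=\chi(h')\chi(h)$, so it suffices to observe that $\chi\circ\s=\chi$. This holds because $\s$ permutes the simple reflections $\{s_i\}_{i\in\tilde\BS}$, hence $\chi(T_{\s(s_i)})=\bq=\chi(T_{s_i})$, and the claim follows on generators. Consequently $\chi(hh'-h'\s(h))=0$, so $\chi$ factors through $\bar H_{\s}$.

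Now apply $\chi$ to both sides of the identity
\[
T_w+[H,H]_{\s}\;=\;\sum_{\CO\in B(\tW,\s)} F_{w,\CO}\,T_{\CO}
\]
in $\bar H_{\s}$. The left-hand side gives $\chi(T_w)=\bq^{\ell(w)}$, while the right-hand side gives $\sum_{\CO} F_{w,\CO}\,\chi(T_{w_\CO})=\sum_{\CO} F_{w,\CO}\,\bq^{\ell(\CO)}$ for any minimal length representative $w_\CO\in\CO_{\min}$. This completes the first equality. There is no serious obstacle here: the main point to get right is simply that the index character is compatible with the $\s$-twist, which is guaranteed by $\s$-invariance of the length function. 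A geometric interpretation, to be supplied afterwards, can be obtained by stratifying the Iwahori double coset $\breve\CI\dot w\breve\CI$ according to the Newton stratification on $\breve G$: $\bq^{\ell(w)}$ is the ``mass'' of the Schubert cell, and each piece contributes $F_{w,[b]}$ (up to the normalization $\bq^{\langle\nu_b,2\rho\rangle}$ already absorbed into the definitions) via the Deligne--Lusztig reduction.
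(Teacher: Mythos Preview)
Your proof is correct and follows essentially the same approach as the paper: both apply the index character $T_{s_i}\mapsto \bq$ to the cocenter identity, using that this character is $\s$-invariant and hence kills $[H,H]_\s$. The paper's version is terser (it omits the verification that the character is well defined and that $\chi\circ\s=\chi$), but the argument is the same, and your closing remark about the geometric interpretation via Newton stratification of $\breve\CI\dot w\breve\CI$ matches what the paper does in the subsequent discussion.
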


\begin{proof}
We prove the first equality. The second  follows from the definition. 

Let $\pi: H \to \BZ[\bq^{\pm 1}]$ be the homomorphism of $\BZ[\bq^{\pm 1}]$-algebras sending $T_{s_i}$ to $\bq$ for any $i \in \tilde \BS$. As $\pi \circ \s = \pi$, $\pi([H, H]_{\s})=0$ and thus $\pi$ induces a homomorphism of the $\BZ[\bq^{\pm 1}]$-modules $\bar H_\s \to \BZ[\bq^{\pm 1}]$, which we still denote by $\pi$. 

We have $T_w+[H, H]_\s=\sum_{\CO \in B(\tW, \s)} F_{w, \CO} T_\CO$. Applying $\pi$ to both sides, we obtain $\bq^{\ell(w)}=\sum_{\CO \in B(\tW, \s)} \bq^{\ell(\CO)} F_{w, \CO}$. 
\end{proof}

In the rest of this subsection, we assume furthermore that $F=\BF_q((\e))$ and that $\bG$ is split over $F$. We give a geometric interpretation of the above identity. 

For any $n \in \BN$, let $\breve I_n$ be the $n$th congruence subgroup of $\breve I$. Following \cite[\S 2.10]{GHKR06}, we call a subset $X$ of $\breve G$ {\it admissible} if for any $w \in \tW$, there exists $n \in \BN$ such that $X \cap \breve I \dot w \breve I$ is stable under  right multiplication of $\breve I_n$. In this case, the action of $\breve I_n$ on $X \cap \breve I \dot w \breve I$ is free, and $\frac{\sharp \bigl((X \cap \breve I \dot w \breve I)/\breve I_n\bigr)^\s}{\sharp \breve I_n^\s}$ is independent of the choice of such $n$. We set $$\breve \sharp (X \cap \breve I \dot w \breve I)=\frac{\sharp \bigl((X \cap \breve I \dot w \breve I)/\breve I_n\bigr)^\s}{\sharp \breve I_n^\s}.$$

An admissible subset $X$ is called {\it bounded} if $X \cap \breve I \dot w \breve I=\emptyset$ for all but finitely many $w \in \tW$. For any bounded admissible subset $X$, we set $$\breve \sharp X=\sum_{w \in \tW} \breve \sharp (X \cap \breve I \dot w \breve I)$$ and call it the {\it normalized cardinality of the rational points} of $X$. 
By \cite[Theorem A.1]{He-KR}, each $\s$-conjugacy class of $\breve G$ is admissible. We have the following geometric interpretation of the class polynomials. 

\begin{proposition}
Let $w \in \tW$ and $[b] \in B(\bG)$. Then $$\breve \sharp ([b] \cap \breve I \dot w \breve I)=F_{w, [b]} \vert_{\bq=q}.$$
\end{proposition}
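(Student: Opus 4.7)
The plan is to define a $\BZ$-linear functional on the specialized Hecke algebra $H|_{\bq=q}$ that computes $\breve\sharp([b]\cap \breve I\dot w\breve I)$, show it factors through the $\s$-cocenter, and identify it on the cocenter basis $\{T_\CO\}$ from Theorem~\ref{basis}. Explicitly, set $\mu_b(T_w) := \breve\sharp([b] \cap \breve I\dot w\breve I)$ and extend $\BZ$-linearly. I would then prove (i) $\mu_b$ vanishes on the $\s$-commutators and so descends to $\bar H_\s|_{\bq=q}$, and (ii) on the cocenter basis, $\mu_b(T_\CO) = q^{\ell(\CO)}$ if $\Psi(\CO) = [b]$ and $\mu_b(T_\CO) = 0$ otherwise. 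Granting (i) and (ii), applying $\mu_b$ to the cocenter expansion $T_w \equiv \sum_\CO F_{w,\CO}T_\CO$ (specialized at $\bq=q$) immediately yields
\[
\breve\sharp([b] \cap \breve I\dot w\breve I) = \sum_{\CO\colon \Psi(\CO)=[b]} F_{w,\CO}(q)\,q^{\ell(\CO)} = F_{w,[b]}|_{\bq=q},
\]
as desired.

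For (ii), I would pick a minimal length representative $w' \in \CO_{\min}$; then $T_\CO$ is represented by $T_{w'}$ in the cocenter, so $\mu_b(T_\CO) = \breve\sharp([b]\cap \breve I\dot w'\breve I)$. A Lang--Steinberg argument applied to $\breve I$ with the twisted Frobenius $\mathrm{Ad}(\dot w')\circ \s$ shows that $\breve I \dot w'\breve I$ lies in the single $\s$-conjugacy class $[\dot w']$ (a standard consequence of the reduction theory from \cite{He14} in the minimal-length setting). Therefore $[b] \cap \breve I\dot w'\breve I$ equals $\breve I\dot w'\breve I$ when $\Psi(\CO) = [b]$ and is empty otherwise, and the computation $\breve\sharp(\breve I\dot w'\breve I) = q^{\ell(w')}$ gives exactly (ii).

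For (i), the crucial task is to establish the point-counting shadows of the Hecke relations that generate $[H,H]_\s$. Namely, for each $w \in \tW$ and $i \in \tilde\BS$: if $\ell(s_iw\s(s_i)) = \ell(w)$, then $\breve\sharp([b] \cap \breve I\dot w\breve I) = \breve\sharp([b] \cap \breve I\dot{s_iw\s(s_i)}\breve I)$; and if $\ell(s_iw\s(s_i)) = \ell(w)-2$, then $\breve\sharp([b] \cap \breve I\dot w\breve I) = \breve\sharp([b] \cap \breve I\dot{s_iw\s(s_i)}\breve I) + (q-1)\,\breve\sharp([b] \cap \breve I\dot{s_iw}\breve I)$. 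The first identity is a direct bijection produced by $\s$-conjugation by $\dot s_i$ together with the standard intertwiner identifying the two double cosets. The second is the normalized point count version of the Deligne--Lusztig reduction in Proposition~\ref{DLReduction1}(2), pulled back via the Lang map $\phi_b\colon g \mapsto g^{-1}b\s(g)$: the $\BG_m$- and $\BA^1$-bundle stratification of $X_w(b)$ lifts to a matching stratification of $[b] \cap \breve I\dot w\breve I$ whose normalized cardinalities contribute the factors $(q-1)$ and $1$ respectively.

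With (i) and (ii) in place, the proposition follows by induction on $\ell(w)$: Theorem~\ref{min}(1) reduces any $w$ to a minimal length representative in its $\s$-conjugacy class via a finite sequence of elementary $\s$-conjugations, and at each step the identities in (i) mirror the corresponding reductions of $F_{w,[b]}$ in the cocenter. The main obstacle is transferring the Deligne--Lusztig reduction from the flag variety $Fl$ (where it is formulated in Proposition~\ref{DLReduction1}) down to $\breve G$: the bridge is the Lang map $\phi_b$, which is simultaneously an $\breve I$-torsor (via $g\mapsto g\breve I$) over $X_w(b)$ and a $\bJ_b(F)$-torsor over $[b]\cap \breve I\dot w\breve I$. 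One must verify that the $\breve I_n$-normalization built into the definition of $\breve\sharp$ precisely balances these two torsor contributions so that the geometric identity on $\breve G$ matches the Hecke-algebra identity term by term.
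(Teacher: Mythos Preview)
Your approach is essentially the paper's: induct on $\ell(w)$, with the base case given by the fact that for minimal-length $w'$ one has $[b]\cap\breve I\dot w'\breve I$ equal to $\breve I\dot w'\breve I$ or empty, and the inductive step given by the point-count shadow of the Deligne--Lusztig reduction. The cocenter-functional framing is a harmless repackaging, though note that the two identities you isolate do not by themselves show $\mu_b$ vanishes on \emph{all} of $[H,H]_\s$; fortunately your final paragraph abandons that claim and runs the induction directly, which is exactly what the paper does.

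There is, however, a genuine error in your reduction identity. In the case $\ell(s_iw\s(s_i))=\ell(w)-2$ you write
\[
\breve\sharp([b]\cap\breve I\dot w\breve I)=\breve\sharp([b]\cap\breve I\,\dot{s_iw\s(s_i)}\,\breve I)+(q-1)\,\breve\sharp([b]\cap\breve I\,\dot{s_iw}\,\breve I),
\]
and you say the $\BA^1$-bundle contributes the factor $1$. This is wrong: an $\BA^1$-fibration multiplies point counts by $q$, not $1$. The correct identity (and the one matching the Hecke relation $T_w\equiv(\bq-1)T_{s_iw}+\bq\,T_{s_iw\s(s_i)}$ in the cocenter, as well as the recursion $F_{w,[b]}=(\bq-1)F_{s_iw,[b]}+\bq\,F_{s_iw\s(s_i),[b]}$) is
\[
\breve\sharp([b]\cap\breve I\dot w\breve I)=(q-1)\,\breve\sharp([b]\cap\breve I\,\dot{s_iw}\,\breve I)+q\,\breve\sharp([b]\cap\breve I\,\dot{s_iw\s(s_i)}\,\breve I).
\]
With this correction your induction goes through and matches the paper's proof line for line.
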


\begin{proof}
We argue by induction on $\ell(w)$. 

If $w$ is a minimal length element in its $\s$-conjugacy class in $\tW$, then $$[b] \cap \breve I \dot w \breve I=\begin{cases} \breve I \dot w \breve I & \text{if } [b]=[\dot w], \\ \emptyset & \text{otherwise}. \end{cases}$$

On the other hand, by \cite[\S 2.8.2]{He-CDM}, $$F_{w, [b]}=\begin{cases} \bq^{\ell(w)} & \text{if } [b]=[\dot w], \\ 0 & \text{otherwise}. \end{cases}$$ Thus the proposition holds in this case. 

Now we assume that $w$ is not a minimal length element in its $\s$-conjugacy class. By Theorem~\ref{min}(1), there exist $w' \in \tW$ and $i \in \tilde \BS$ such that $w \approx_\s w'$ and $s_i w' \s(s_i)<w'$. Set $w_1=s_i w'$ and $w_2=s_i w' \s(s_i)$. Then $\ell(w_1), \ell(w_2)<\ell(w)$. By the proof of \cite[Theorem 2.16]{He-CDM}, $$\breve \sharp ([b] \cap \breve I \dot w \breve I)=\breve \sharp ([b] \cap \breve I \dot w' \breve I)=(q-1) \breve \sharp ([b] \cap \breve I \dot w_1 \breve I)+q \breve \sharp ([b] \cap \breve I \dot w_2 \breve I).$$ On the other hand, by \cite[\S 2.8.2]{He-CDM}, $F_{w, [b]}=(\bq-1) F_{w_1, [b]}+\bq F_{w_2, [b]}$. Now the statement for $w$ follows from the inductive hypothesis on $w_1$ and $w_2$. 
\end{proof}

\smallskip

We have the decomposition $\breve I \dot w \breve I=\sqcup_{[b] \in B(\bG)} [b] \cap \breve I \dot w \breve I$. Thus $$\bq^{\ell(w)}=\breve \sharp (\breve I \dot w \breve I)=\sum_{[b] \in B(\bG)} \breve \sharp([b] \cap \breve I \dot w \breve I)=\sum_{[b] \in B(\bG)} F_{w, [b]}.$$

This gives an alternative proof of Proposition~\ref{prop:id}  and a geometric interpretation of Proposition~\ref{prop:id} in the case where the $\s$-action on $\tW$ is trivial via counting (the normalized cardinality of) the rational points of $\breve I \dot w \breve I$. 

\subsection{Reduction trees}\label{sec:tree}
Let $w \in \tW$. We construct the reduction tree for $w$, which encodes the Deligne--Lusztig reduction for the affine Deligne--Lusztig varieties associated with $w$ (and with all $b \in \breve G$). 

The vertices of the graphs are the elements of $\tW$, and the (oriented) edges are of the form $x \lup y$, where $x, y \in \tW$, and there exists $x' \in \tW$ and $i \in \tilde \BS$ with $x \approx_\s x'$, $s_i x' \s(s_i)<x'$ and $y \in \{s_i x', s_i x' \s(s_i)\}$. Some elements of $\tW$ may occur more than once in a reduction tree. 

The reduction trees are constructed inductively. 

Suppose that $w$ is of minimal length in its $\s$-conjugacy class of $\tW$. Then the reduction tree of $w$ consists of a single vertex $w$ and no edges. 

Suppose that $w$ is not of minimal length in its $\s$-conjugacy class of $\tW$ and that a reduction tree is given for any $z \in \tW$ with $\ell(z)<\ell(w)$. By Theorem~\ref{min}(1), there exist $w' \in \tW$ and $i \in \tilde \BS$ with $w \approx_\s w'$ and $s_i w' \s(s_i)<w'$. The reduction tree of $w$ is the graph containing the given reduction tree for $s_i w'$ and the reduction tree for $s_i w' \s(s_i)$, and the edges $w \lup s_i w'$ and $w \lup s_i w' \s(s_i)$. 

Note that the reduction trees of $w$ are not unique. They depend on the choices of $w'$ and $s_i$ in the construction. We will see in the rest of this section that the reduction trees encode more information than the class polynomials. 

\subsection{Reduction path}
Let $\CT$ be a reduction tree of $w$. An {\it end point} of the tree $\CT$ is a vertex $x$ of $\CT$ such that there is no edge of the form $x \lup x'$ in $\CT$. By Theorem~\ref{min}, each end point is of minimal length in its $\s$-conjugacy class. A {\it reduction path} in $\CT$ is a path $\underline p: w \lup w_1 \lup \cdots \lup w_n$, where $w_n$ is an end point of $\CT$. The {\it length} $\ell(\underline p)$ of the reduction path $\underline p$ is the number of edges in $\underline p$. We also write $\operatorname{end}(\underline p)=w_n$ and $[b]_{\underline p}=\Psi(\operatorname{end}(\underline p)) \in B(\bG)$.

Note that if $x \lup y$, then $\ell(x)-\ell(y) \in \{1, 2\}$. We say that the edge $x \lup y$ is of type I if $\ell(x)-\ell(y)=1$ and  of type II if $\ell(x)-\ell(y)=2$. For any reduction path $\underline p$, we denote by $\ell_I(\underline p)$ the number of type I edges in $\underline p$ and by $\ell_{II}(\underline p)$ the number of type II edges in $\underline p$. Then $\ell(\underline p)=\ell_I(\underline p)+\ell_{II}(\underline p)$. 

The following relation between  class polynomials and reduction trees follows easily from the inductive construction, and we omit the details of its proof. 

\begin{lemma}\label{class-poly}
Let $w \in \tW$ and let $\CT$ be a reduction tree of $w$. Then, for any $\s$-conjugacy class $\CO$ of $\tW$, we have $$F_{w, \CO}=\sum_{\underline p} (\bq-1)^{\ell_I(\underline p)} \bq^{\ell_{II}(\underline p)},$$ where $\underline p$ runs over all the reduction paths in $\CT$ with $\Psi(\operatorname{end}(\underline p)) = \CO$. 
\end{lemma}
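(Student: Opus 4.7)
The plan is to prove Lemma~\ref{class-poly} by induction on $\ell(w)$, tracking carefully how the inductive construction of the reduction tree $\CT$ mirrors the recursive identity satisfied by the class polynomials in the $\s$-cocenter $\bar H_\s$.

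For the base case, suppose $w$ is already of minimal length in its $\s$-conjugacy class $\CO_0$. Then $\CT$ consists of the single vertex $w$, and the only reduction path is the trivial one $\underline p = (w)$ with $\ell_I(\underline p) = \ell_{II}(\underline p) = 0$ and $\operatorname{end}(\underline p) = w \in \CO_0$. Meanwhile, by Theorem~\ref{basis} and the definition of the class polynomials, $T_w + [H,H]_\s = T_{\CO_0}$, so $F_{w,\CO} = \delta_{\CO,\CO_0}$. Both sides of the claimed identity equal $1$ when $\CO = \CO_0$ and $0$ otherwise, matching the formula.

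For the inductive step, assume the lemma holds for all elements of length $<\ell(w)$, and suppose $w$ is not of minimal length. Pick $w' \approx_\s w$ and $i \in \tilde\BS$ with $s_i w' \s(s_i) < w'$, as used in the construction of $\CT$. The first key step is to establish the cocenter identity
\begin{equation*}
T_w \equiv (\bq-1)\,T_{s_iw'} + \bq\,T_{s_iw'\s(s_i)} \pmod{[H,H]_\s}.
\end{equation*}
This splits into two pieces. First, $w \approx_\s w'$ implies $T_w \equiv T_{w'} \pmod{[H,H]_\s}$ by iterating the basic identity $T_{s_j} T_x \equiv T_x T_{\s(s_j)} \pmod{[H,H]_\s}$ along the chain realizing $\approx_\s$ (standard and already implicit in the definition of $T_\CO$). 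Second, writing $w' = s_i \cdot (s_i w')$ with length going up and $s_i w' = (s_i w' \s(s_i))\cdot \s(s_i)$ with length going up, we compute
\begin{equation*}
T_{w'} = T_{s_i} T_{s_iw'} \equiv T_{s_iw'} T_{\s(s_i)} = T_{s_iw'\s(s_i)} T_{\s(s_i)}^2,
\end{equation*}
and expand $T_{\s(s_i)}^2 = (\bq-1) T_{\s(s_i)} + \bq$ using the quadratic relation, yielding the displayed identity. Extracting coefficients in the basis $\{T_\CO\}$ of $\bar H_\s$ from Theorem~\ref{basis} gives
\begin{equation*}
F_{w,\CO} = (\bq-1)\,F_{s_iw',\CO} + \bq\,F_{s_iw'\s(s_i),\CO}.
\end{equation*}

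The second step is to match this recursion with the reduction-path sum. By construction, $\CT$ is obtained from reduction trees $\CT_1$ for $s_iw'$ and $\CT_2$ for $s_iw'\s(s_i)$ by adjoining the type~I edge $w \lup s_iw'$ and the type~II edge $w \lup s_iw'\s(s_i)$. Hence every reduction path $\underline p$ in $\CT$ with $\Psi(\operatorname{end}(\underline p))=\CO$ decomposes uniquely as one of these edges followed by a reduction path $\underline p'$ in $\CT_1$ or $\CT_2$ ending in $\CO$. In the first case, $\ell_I(\underline p) = \ell_I(\underline p')+1$ and $\ell_{II}(\underline p)=\ell_{II}(\underline p')$, contributing a factor of $(\bq-1)$; in the second case, $\ell_I(\underline p)=\ell_I(\underline p')$ and $\ell_{II}(\underline p)=\ell_{II}(\underline p')+1$, contributing a factor of $\bq$. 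Applying the inductive hypothesis to $\CT_1$ and $\CT_2$ (legitimate since $\ell(s_iw'),\ell(s_iw'\s(s_i)) < \ell(w)$) and summing yields exactly $(\bq-1)F_{s_iw',\CO}+\bq F_{s_iw'\s(s_i),\CO} = F_{w,\CO}$, closing the induction.

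There is no serious obstacle; the proof is essentially bookkeeping. The only delicate point is the first step, verifying the cocenter identity rigorously: one must be careful that the $\approx_\s$-equivalence indeed descends to equality in $\bar H_\s$ and that the length conditions $s_iw'<w'$ and $s_iw'\s(s_i)<s_iw'$ are correctly used to apply the quadratic relation in the right direction. Once this is in place, the combinatorial correspondence between edges in $\CT$ and terms in the recursion is transparent.
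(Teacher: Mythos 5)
Your proof is correct, and it is exactly the induction the paper has in mind when it says the lemma "follows easily from the inductive construction" and omits the details: the cocenter recursion $F_{w,\CO}=(\bq-1)F_{s_iw',\CO}+\bq F_{s_iw'\s(s_i),\CO}$ you derive is the standard one from \cite[\S 2.8.2]{He-CDM} (used again in the paper's proof of the point-counting proposition), and your edge-by-edge matching with the tree construction is the intended bookkeeping. No gaps.
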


Combining Proposition~\ref{DLReduction1} with the construction of the reduction trees, we obtain the following result. 

\begin{proposition}\label{prop:dec}
Let $w \in \tW$ and $\CT$ be a reduction tree of $w$. Then, for any $b \in \breve G$, there exists a decomposition $$X_w(b)=\bigsqcup_{\underline p \mathrm{\; is\; a\; reduction\; path\; of\; } \CT; [b]_{\underline p}=[b]} X_{\underline p},$$ where $X_{\underline p}$ is a locally closed subscheme of $X_w(b)$, and is $\bJ_b(F)$-equivariant universally homeomorphic to an iterated fibration of type $(\ell_I(\underline p), \ell_{II}(\underline p))$ over  $X_{\operatorname{end}(\underline p)}(b)$.
\end{proposition}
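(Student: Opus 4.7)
The plan is to induct on $\ell(w)$, matching the inductive construction of the reduction tree $\CT$ in \S\ref{sec:tree} with the geometric recursion of Proposition~\ref{DLReduction1}. The key observation is that the two-step branching at the root of $\CT$, namely the type-I edge $w \lup s_iw'$ together with the type-II edge $w \lup s_iw'\s(s_i)$, corresponds exactly to the two-part decomposition of $X_{w'}(b)$ into an open locally trivial $\BG_m$-bundle and a closed locally trivial $\BA^1$-bundle supplied by Proposition~\ref{DLReduction1}(2).

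In the base case when $w$ is of minimal length in its $\s$-conjugacy class, the tree $\CT$ has no edges, the unique reduction path $\underline p$ satisfies $\operatorname{end}(\underline p)=w$ and $\ell_I(\underline p)=\ell_{II}(\underline p)=0$, and $X_w(b)$ is trivially an iterated fibration of type $(0,0)$ over itself. For the inductive step I fix, as in the construction of $\CT$, elements $w' \in \tW$ and $i \in \tilde \BS$ with $w \approx_\s w'$ and $s_iw'\s(s_i)<w'$, and let $\CT_1$, $\CT_2$ be the sub-reduction trees rooted at $s_iw'$ and $s_iw'\s(s_i)$. Since $w \approx_\s w'$ can be realized by a sequence of length-preserving $\xrightarrow{s}_\s$ steps, iterating Proposition~\ref{DLReduction1}(1) produces a $\bJ_b(F)$-equivariant universal homeomorphism $X_w(b) \cong X_{w'}(b)$. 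Proposition~\ref{DLReduction1}(2) then decomposes $X_{w'}(b)=X_1 \sqcup X_2$, with $X_1$ a $\bJ_b(F)$-stable locally trivial $\BG_m$-bundle over $X_{s_iw'}(b)$ and $X_2$ a locally trivial $\BA^1$-bundle over $X_{s_iw'\s(s_i)}(b)$. Applying the induction hypothesis to $s_iw'$ with $\CT_1$ and to $s_iw'\s(s_i)$ with $\CT_2$ further decomposes these targets into locally closed $\bJ_b(F)$-stable strata $X_{\underline p_j}$; pulling each back along the relevant bundle map from $X_1$ or $X_2$ produces a stratum $X_{\underline p}\subset X_w(b)$, which is itself a locally trivial $\BG_m$- or $\BA^1$-bundle over $X_{\underline p_j}$ and hence an iterated fibration of the required type over $X_{\operatorname{end}(\underline p)}(b)$. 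Here $\underline p$ ranges over the reduction paths of $\CT$ obtained uniquely by prepending the type-I edge or the type-II edge to some $\underline p_j \in \CT_j$, with $\operatorname{end}(\underline p)=\operatorname{end}(\underline p_j)$ and the pair $(\ell_I,\ell_{II})$ shifted by $(1,0)$ or $(0,1)$ respectively. Strata indexed by paths with $[b]_{\underline p}\neq[b]$ are empty, since the end point is then a minimal length element not $\s$-conjugate to $b$.

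I expect the only real difficulty to be careful bookkeeping: verifying that the combinatorial recursion defining reduction paths and $(\ell_I,\ell_{II})$ in \S\ref{sec:tree} is perfectly compatible with the two-piece geometric recursion of Proposition~\ref{DLReduction1}(2), and that the type of the initial edge of $\underline p$ propagates to the correct $(1,0)$ or $(0,1)$ shift in $(\ell_I,\ell_{II})$. No ingredient beyond Theorem~\ref{min}, Proposition~\ref{DLReduction1}, and the definition of $\CT$ should be required.
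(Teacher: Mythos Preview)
Your proposal is correct and is exactly the argument the paper has in mind: the paper states only that the proposition follows by ``combining Proposition~\ref{DLReduction1} with the construction of the reduction trees,'' and your induction on $\ell(w)$ is precisely the unwinding of that sentence. The bookkeeping you flag (matching the type-I/type-II edges to the $\BG_m$/$\BA^1$ branches, and the $(1,0)$/$(0,1)$ shifts in $(\ell_I,\ell_{II})$) is routine and you have it right.
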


\begin{remark}\label{fibration}
Since $\operatorname{end}(\underline p)$ is a minimal length element in its $\s$-conjugacy class, by \S\ref{sec:classicalDL}(a) we have $X_{\operatorname{end}(\underline p)}(b) \cong \bJ_b(F) \times^P X$, where $P$ is a parahoric subgroup of $\bJ_b(F)$ and  $X$ is (the perfection of) an irreducible component of a classical Deligne--Lusztig variety. Thus each irreducible component $Y$ of $X_{\underline p}$ is universally homeomorphic to an iterated fibration of type $(\ell_I(\underline p), \ell_{II}(\underline p))$ over $X$. We have a natural action of $\bJ_b(F)$ on $X_w(b)$, and $X_{\underline p}$ is stable under this action. In this case, $X_{\underline p} \cong \bJ_b(F) \times^P Y$. 
\end{remark}

\section{Cordiality and the set $B(\bG)_w$}\label{sec:cor}

\subsection{Maximal Hodge--Newton irreducible elements}
Recall that $[b_{t^\mu}]$ is the unique maximal element of $B(\bG, \mu)$. We have the following result on $B(\bG, \mu)_{\indec}$.

\begin{proposition}\label{prop:max-b}
The set $B(\bG, \mu)_{\indec}$ contains a unique maximal element.
\end{proposition}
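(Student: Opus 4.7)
The plan is to show that $B(\bG,\mu)_{\indec}$ coincides with a single irreducible stratum $B(\bG,\mu)_{J_*\text{-}\irr}$, transfer the question via Corollary~\ref{cor:irr} to a Levi in which $\mu$ is essentially non-central, reduce by a product decomposition to the $F$-simple and $\s$-simple case, and finally use Chai's length formula to pin down the maximum.

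To identify the stratum, I would take $[b]\in B(\bG,\mu)_{J\text{-}\irr}$ and expand $\mu^\diamond-\nu_b=\sum_{j\in J}c_j\a_j^\vee$ with all $c_j>0$. For $i\notin J$, computing $\<\nu_b,\a_i\>=\<\mu^\diamond,\a_i\>-\sum_{j\in J}c_j\<\a_j^\vee,\a_i\>$ and using that both summands are non-negative (since $\mu^\diamond$ is dominant and $\<\a_j^\vee,\a_i\>\le 0$ for $i\ne j$), one finds
\[
I(\nu_b)\setminus J \;=\; \{i\notin J : \<\mu^\diamond,\a_i\>=0 \text{ and } i \text{ not adjacent to } J \text{ in the Dynkin diagram}\}.
\]
Since both $I(\nu_b)$ and $J$ are $\s$-stable, $[b]$ fails to be HN-indecomposable iff $I(\nu_b)\cup J\subsetneq\BS$. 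Combined with $J\in\mathfrak{M}_\mu$, HN-indecomposability forces $J$ to be a union of $\s$-orbits of connected components of $\BS$ on which $\mu$ is non-central, with $\mu$ central on every $\s$-orbit of components outside $J$. This pins down $J$ uniquely as
\[
J_* \;=\; \bigcup\{\s\text{-orbits of connected components } C \text{ of } \BS : \mu \text{ is non-central on } C\},
\]
so $B(\bG,\mu)_{\indec}=B(\bG,\mu)_{J_*\text{-}\irr}$.

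Next, Corollary~\ref{cor:irr} supplies an order-preserving bijection $B(\bG,\mu)_{\indec}\cong B(\bM_{J_*},\mu)_{\irr}$, and the observation in \S\ref{sec:HN} identifies this with $B(\bM_{J_*},\mu)_{\indec}$. Writing $\bM_{J_*}$ as the product of Levi factors attached to its $\s$-orbits of connected Dynkin components, both $B(\bM_{J_*},\mu)$ and its irreducible subset split as products of posets, so the problem reduces to exhibiting a unique maximum in each $\s$-simple factor with $\mu$ non-central.

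In that reduced situation, for $[b]\in B(\bG,\mu)_{\irr}$ with $\mu^\diamond-\nu_b=\sum_i c_i\a_i^\vee$, Chai's formula in \S\ref{sec:2.2}(a) gives
\[
\leng([b],[t^\mu])=\sum_{\CO\in\BS/\<\s\>}\Bigl\lceil\sum_{i\in\CO}c_i\Bigr\rceil \;\ge\; |\BS/\<\s\>|,
\]
with equality iff $\sum_{i\in\CO}c_i\in(0,1]$ for every $\s$-orbit $\CO$. Any maximum of $B(\bG,\mu)_{\irr}$ must minimize the length-distance from $[t^\mu]$ and hence achieve this equality; the $\s$-invariance of $\nu_b$ then forces the $c_i$'s to be equal within each $\s$-orbit. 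The main obstacle is to check that, among all such $\s$-symmetric positive tuples $(c_i)$ satisfying $\sum_{i\in\CO}c_i\in(0,1]$ together with the Kottwitz condition $\k([b])=\k(\mu)$, there is a unique dominance-maximal one. For classical types this is a concrete Newton-polygon exercise---the optimal Newton point is obtained by uniformly lowering the internal breakpoints of the $\mu^\diamond$-polygon by the smallest amount consistent with concavity---while for exceptional types the claim can be verified by inspection of the finite Hasse diagrams of $B(\bG,\mu)$.
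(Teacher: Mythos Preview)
Your reduction in the first three steps is correct and clean: the identification $B(\bG,\mu)_{\indec}=B(\bG,\mu)_{J_*\text{-}\irr}$, the transfer to the Levi $\bM_{J_*}$, and the product decomposition over $\s$-orbits of components all go through as you describe. The difficulty is entirely in your final step, and there the argument has real gaps.

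First, the assertion that ``any maximum of $B(\bG,\mu)_{\irr}$ must minimize the length-distance from $[t^\mu]$'' is unjustified. Although $B(\bG,\mu)_{\irr}$ is downward-closed in the ranked poset $B(\bG,\mu)$, a downward-closed subset of a ranked lattice can have maximal elements of different coranks, so nothing forces a maximal element to sit at corank $\sharp(\BS/\<\s\>)$. Second, even if every maximal element did satisfy $\sum_{i\in\CO}c_i\in(0,1]$ for all $\CO$, you still owe the dominance and integrality conditions on $\nu_b=\mu^\diamond-\sum c_i\a_i^\vee$; the constraints you list do not by themselves cut out Newton points. Third, the ``main obstacle'' you identify is never resolved: a Newton-polygon exercise for classical types and inspection for exceptional types is not a proof, especially since $\mu$ ranges over infinitely many dominant coweights. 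Finally, even granting a unique element at minimal corank, you would still need to show it dominates every other element of $B(\bG,\mu)_{\irr}$, which does not follow from corank alone.

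The paper avoids all of this by constructing the candidate directly. It sets $e_i\in\BQ\omega_i^\vee$ with $\<e_i,\omega_i\>=\frac{1}{\sharp\CO_i}\max\{0,\<\mu,\omega_{\CO_i}\>-1\}$, takes $\nu=\min C_{\ge E}$ via Chai's theorem \cite[Theorem 6.5]{Chai}, checks that this $\nu$ is Hodge--Newton indecomposable, and then, for any other $[b']\in B(\bG,\mu)_{\indec}$, uses indecomposability to get $\<\mu-\nu_{b'},\omega_{\CO_i}\>\ge 1$ for $i\notin I(\nu_{b'})$, hence $e_i'\le e_i$ and $\nu'\le\nu$. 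This is a uniform, type-free argument; your route, even if the gaps were filled case by case, would recover only a weaker and more laborious version of the same conclusion.
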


\begin{remark}
We denote this element by $[b_{\mu, \bG\text{-}\indec}]$. 
\end{remark}

\begin{proof}
Following \cite{Chai}, for any subset $E \subseteq (V^+)^\s$, we set $$C_{\ge E} = \{v \in (V^+)^\s; v \ge v', \text{ for any } v' \in E\}.$$ 

For any $i \in \BS$, denote by $\CO_i$ the $\s$-orbit of $i$. Set $\o_{\CO_i} = \sum_{j \in \CO_i} \o_j$. Define $e_i \in \BQ \o_i^\vee$ with $$\<e_i,\o_i  \> = \frac{1}{\sharp \CO_i} \max\{0, \<\mu,\o_{\CO_i}\> - 1\}.$$ Set $E = \{e_i; i \in \BS\}$. By \cite[Theorem 6.5]{Chai}, there exists a unique $\s$-conjugacy class $[b_{\mu, \bG\text{-}\indec}] \in B(\bG, \mu)$ such that $\nu_{b_{\mu, \bG\text{-}\indec}} = \min C_{\ge E}$ and $\< \mu - \nu_{b_{\mu, \bG\text{-}\indec}},\o_{\CO_k}\> = 1$ for any $k\in \BS - I(\nu)$. Set $\nu=\nu_{b_{\mu, \bG\text{-}\indec}}$.

Suppose that $(\mu, [b_{\mu, \bG\text{-}\indec}])$ is Hodge--Newton decomposable with respect to some standard Levi subgroup $\bM_J$ with $J = \s(J) \subsetneqq \BS$. Let $j \in \BS - J$. By definition, $\< \mu-\nu,\o_{\CO_j}\> = 0$. On the other hand, as $I(\nu) \subseteq J$ we have $\<\nu,\a_j\> \neq 0$. Thus $\< \nu,\o_{\CO_j}\> = 1$, which is a contradiction. Therefore $[b_{\mu, \bG\text{-}\indec}] \in B(\bG, \mu)_{\indec}$.

On the other hand, let $[b'] \in B(\bG, \mu)_{\indec}$. Set $\nu'= \nu_{b'}$. For any $i \in \BS - I(v')$, we denote by $pr_{(i)}: V=\BR\o^{\vee}_i\oplus \sum_{j\ne i}\BR\a_j^{\vee} \to \BR \o_i^\vee$  the natural projection. Set $e'_i= pr_{(i)}(v') \in \BR \o_i^\vee$. Let $E'=\{e'_i; i \in \BS - I(v')\}$. Again by \cite[Theorem 6.5]{Chai}, we have $v' = \min C_{\ge E'}$. By \S\ref{sec:2.2}(a), we have $\<\mu - \nu',\o_{\CO_i}\> \in \BZ_{\ge 1}$ for $i \in \BS - I(v')$. This means that $e_i' \le e_i$ for $i \in \BS - I(v')$. So $v \in C_{\ge E'}$ and $v  \ge \min C_{\ge E'} = v'$. Hence $[b_{\mu, \bG\text{-}\indec}]$ is the unique maximal element of $B(\bG, \mu)_{\indec}$.
\end{proof}

Combining the description of $[b_{\mu, \bG\text{-}\indec}]$ in the proof above with \S\ref{sec:2.2}(a), we have the following result. 

\begin{corollary}\label{cor:max-b}
Suppose that $\mu$ is essentially non-central. Then $${\leng}([b_{\mu, \bG\text{-}\indec}], [t^{\mu}])=\sharp(\BS/\<\s\>).$$
\end{corollary}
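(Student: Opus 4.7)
The plan is to apply Chai's length formula \S\ref{sec:2.2}(a),
\[
\leng([b_{\mu, \bG\text{-}\indec}], [t^\mu]) \;=\; \sum_{\CO \in \BS/\<\s\>} \lceil \<\mu - \nu, \omega_\CO\>\rceil,
\]
where $\nu = \nu_{b_{\mu, \bG\text{-}\indec}}$, and to show that every summand equals exactly $1$. Equivalently, I aim to establish the pointwise inequalities $0 < \<\mu - \nu, \omega_\CO\> \le 1$ for each $\s$-orbit $\CO \subseteq \BS$; summing then produces the desired $\sharp(\BS/\<\s\>)$.

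For the strict positivity, I would invoke the hypothesis that $\mu$ is essentially non-central, which by \S\ref{sec:HN} forces $B(\bG,\mu)_{\indec} = B(\bG,\mu)_{\irr}$. Writing $\mu^\diamond - \nu = \sum_{i \in \BS} c_i \a_i^\vee$, Hodge--Newton irreducibility means $c_i > 0$ for every $i$, and using the $\s$-invariance of $\omega_\CO$ gives
\[
\<\mu - \nu, \omega_\CO\> \;=\; \<\mu^\diamond - \nu, \omega_\CO\> \;=\; \sum_{i \in \CO} c_i \;>\; 0.
\]

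For the upper bound, I would extract the necessary estimate from Chai's characterization $\nu = \min C_{\ge E}$ used in the proof of Proposition~\ref{prop:max-b}. Since $\nu \ge e_i$ means $\nu - e_i$ is a non-negative rational combination of simple coroots, pairing with $\omega_i$ yields $\<\nu, \omega_i\> \ge \<e_i, \omega_i\> = \tfrac{1}{\sharp\CO_i}\max(0, \<\mu, \omega_{\CO_i}\> - 1)$. Summing this inequality over $i \in \CO$, with $\CO_i = \CO$ and $\omega_{\CO_i} = \omega_\CO$ for such $i$, gives $\<\nu, \omega_\CO\> \ge \max(0, \<\mu, \omega_\CO\> - 1)$; hence
\[
\<\mu - \nu, \omega_\CO\> \;\le\; \<\mu, \omega_\CO\> - \max(0, \<\mu, \omega_\CO\> - 1) \;\le\; 1,
\]
regardless of whether $\<\mu, \omega_\CO\>$ is $\le 1$ or $> 1$.

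Combining the two bounds, every ceiling $\lceil \<\mu - \nu, \omega_\CO\>\rceil$ equals $1$, and Chai's formula gives the total $\sharp(\BS/\<\s\>)$. The only step requiring a bit of care is the translation of Chai's minimality condition $\nu = \min C_{\ge E}$ into the componentwise lower bound $\<\nu, \omega_i\> \ge \<e_i, \omega_i\>$; everything else is elementary bookkeeping, and I do not expect any conceptual difficulty beyond this.
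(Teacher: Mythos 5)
Your proof is correct and takes essentially the same route as the paper, whose one-line argument likewise combines Chai's length formula \S\ref{sec:2.2}(a) with the description of $\nu = \min C_{\ge E}$ from the proof of Proposition~\ref{prop:max-b}. You have simply made explicit the two bounds the paper leaves implicit: strict positivity of $\<\mu-\nu,\o_\CO\>$ from Hodge--Newton irreducibility (which is where essential non-centrality enters) and the upper bound $\le 1$ from $\nu \in C_{\ge E}$, both of which check out.
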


\subsection{Proof of Theorem~\ref{main}(1) and (2) for $w=t^{\mu}c$}\label{tmuc_case}
Now we prove Theorem~\ref{main}(1) and (2) in the case $w = t^{\mu}c\in{}^{\BS}\tW$, where $c$ is a partial $\s$-Coxeter element. In this case, it is easy to see that $\mu$ is essentially non-central in $\supp_{\s}(c)$. Set $J = \supp_{\s}(c)$. We have $J(w) = J(w)_{\mu}' = J_0(w) = J$. We need to show that $w$ is cordial and $B(\bG)_w = B(\bG,\mu)_{J\text{-}\irr}$.

By \S\ref{sec:HN}, we have a natural bijection $B(\bM_J,\mu)_{\irr} \cong B(\bG,\mu)_{J\text{-}\irr}$. By \cite[Theorem 3.3.1]{GHN}, we have $B(\bG)_w = B(\bM_J)_w$. Note that $\mu^{\diamond}-\nu_{b_w}\in \sum_{j\in J}\BR\a_j^{\vee}$. Hence $\<\mu-\nu_{b_w},\rho\> = \<\mu-\nu_{b_w},\rho_{J}\>$, where $\rho_J$ is the half sum of positive roots of $\bM_J$. By definition, $w$ is cordial in $\bG$ if and only if it is cordial in $\bM_J$. Hence we may assume that $J = \BS$ and that $c$ is a $\s$-Coxeter element. 

We first show that

\smallskip

(a) $B(\bG)_w \subseteq B(\bG, \mu)_{\indec}$. 

\smallskip

\noindent Suppose that $X_w(b) \neq \emptyset$ and $(\mu, b)$ is Hodge--Newton decomposable with respect to some proper standard Levi subgroup $\bM$. By \cite[Theorem 1.11]{GHN-full}, there is some $u\in W$ such that $u^{-1}w\s(u)$ lies in $\tW_{\bM}$, which contradicts  the fact that $c$ is $\s-$Coxeter. Thus (a) is proved.


Next we show that 

\smallskip

(b) $[b_w]=[b_{\mu, \bG\text{-}\indec}]$ and $w$ is a cordial element. 

\smallskip

\noindent By \S\ref{sec:cordial}(b), we have 
$$\<\mu, 2 \rho\>-2 \ell(c)=\ell(w)-\ell(\eta_\s(w)) \le \<\nu_{b_w}, 2\rho\>-\de(b_w).$$ Combined with \S\ref{sec:cordial} (a), we get $\leng([b_w],[t^{\mu}]) \le \ell(c)=\sharp(\BS/\<\s\>)$. However, by Corollary~\ref{cor:max-b}, we have $\leng ([b_w],[b_{t^{\mu}}]) \ge \leng ([b_{\mu, \bG\text{-}\indec}],[b_{t^{\mu}}])=\sharp(\BS/\< \s\>)$. Thus we must have $[b_w]=[b_{\mu, \bG\text{-}\indec}]$ and $\dim X_w(b_w)=d_w(b_w)$. Thus (b) is proved. 

Finally, we show that 

\smallskip

(c) $B(\bG)_w=B(\bG, \mu)_{\indec} = B(\bG, \mu)_{\irr}$. 

\smallskip

\noindent Let $[b_{\min}]$ be the unique basic $\s$-conjugacy class in $B(\bG)$ with $\k(b_{\min})=\k(\mu)$. Then $[b_{\min}]$ is the unique minimal element in $B(\bG, \mu)_{\indec}$. Note that $c$ is a $\s$-Coxeter element of $W$. Thus $\nu_{\dot w}$ is central and $[\dot w]=[b_{\min}]$. In particular, $[b_{\min}]\in B(\bG)_w$. By Theorem~\ref{saturated}, $B(\bG)_w$ is saturated and hence must be equal to $B(\bG, \mu)_{\indec}$. 

This completes the proof of the $t^{\mu}c$ case.

\subsection{Partial conjugation}\label{par_conj}
To handle the general case, we use the partial conjugation method introduced in \cite{He07}. 

By partial conjugation, we mean conjugating by elements in the finite Weyl group $W$. For any $x\in  {}^{\BS}\tilde{W},$ set 
$$ I(x) = \max\{ J\subseteq \BS ; \Ad(x)\s(J) = J \}.$$
This is well-defined. Indeed, if $\Ad(x)\s(J_i) = J_i$ for $i = 1,2$, then $\Ad(x)\s(J_1\cup J_2) = J_1\cup J_2$. Let $W_{I(x)}$ be the subgroup of $W$ generated by the simple reflections in $I(x)$. Then $\Ad(x) \circ \s$ gives a length-preserving group automorphism on $W_{I(x)}$. By \cite[Proposition 2.4]{He07}, we have
$$\tW = \bigsqcup_{x\in  {}^{\BS}\tilde{W}}W\cdot_{\s} (W_{I(x)}x) = \bigsqcup_{x\in  {}^{\BS}\tilde{W}}W\cdot_{\s} (x W_{\s(I(x))}).$$ Moreover, by \cite[Proposition 3.4]{He07}, we have the following:

\smallskip

(a) {\it For any $w \in \tW$, there exists $x \in {}^{\BS} \tW$ and $u \in W_{I(x)}$ such that $w \to_\s ux$ and all the simple reflections involved in the conjugations are in $\BS$.}

\smallskip

By \cite[Proposition 4.9]{He14}, we have the following:

\smallskip

(b) {\it Let $x \in {}^{\BS} \tW$ and $u\in W_{I(x)}$. Then $B(G)_{ux}$ = $B(G)_{x}$ and $\dim X_{ux}(b) = \dim X_{x}(b) + \ell(u)$ for any $[b]\in B(\bG)_x$.}

\smallskip

Similar to \S\ref{sec:tree}, we may consider partial reductions. By partial reduction, we mean reduction $w \lup s_i w$ or $w \lup s_i w\s(s_i)$ with $i \in \BS$. We shows that partial reduction preserves elements with finite Coxeter parts.

\begin{lemma}\label{par_cox}
Let $w \in \tW$ with $\eta_\s(w)$ a partial $\s$-Coxeter element of $W$. Let $i \in \BS$ with $s_i w<w$. Then 
\begin{enumerate}
\item $\eta_{\s}(s_i w \s(s_i))$ is a partial $\s$-Coxeter element of $W$ and $\supp_\s(\eta_{\s}(s_i w \s(s_i)))=\supp_\s(\eta_{\s}(w))$. 
\item  If, moreover, $s_i w \s(s_i)<w$, then $\eta_\s(s_i w)$ is a partial $\s$-Coxeter element of $W$ and $\supp_\s(\eta_{\s}(s_i w ))=\supp_\s(\eta_{\s}(w))-\{\s^l(i'); l \in \BZ\}$ for some $i'\in \supp_{\s}(\eta_{\s}(w))$.
\end{enumerate}
\end{lemma}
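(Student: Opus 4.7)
The plan is to use the canonical decomposition $w = x t^{\mu} y$ with $\mu$ dominant, $x,y\in W$ and $t^{\mu} y \in {}^{\BS}\tW$, so that $\eta_{\s}(w) = \s^{-1}(y) x$. Since $t^{\mu} y \in {}^{\BS}\tW$ forces $\ell(w) = \ell(x) + \ell(t^{\mu} y)$, the hypothesis $s_i w < w$ with $i \in \BS$ is equivalent to $s_i x < x$. Hence the canonical decomposition of $s_i w$ is $(s_i x) t^{\mu} y$, which immediately yields the key identity
\[
\eta_{\s}(s_i w) = \s^{-1}(y)(s_i x) = s_{\beta}\,\eta_{\s}(w), \qquad \beta = \s^{-1}(y)(\alpha_i).
\]

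For part (1), I would analyze the canonical decomposition of $s_i w \s(s_i) = (s_i x) t^{\mu}(y\s(s_i))$ via case analysis. In the benign case $t^{\mu}(y\s(s_i)) \in {}^{\BS}\tW$, a direct computation gives
\[
\eta_{\s}(s_i w \s(s_i)) = \s^{-1}(y\s(s_i))(s_i x) = \s^{-1}(y)\, s_i s_i x = \s^{-1}(y) x = \eta_{\s}(w),
\]
so the conclusion is trivial. When $t^{\mu}(y\s(s_i))\notin {}^{\BS}\tW$, there is a left descent $s_j \in \BS$ that must be moved across; tracking this adjustment through the canonical decomposition shows that $\eta_{\s}(s_i w \s(s_i))$ is obtained from $\eta_{\s}(w)$ by a $\s$-twisted conjugation involving a simple reflection whose index already lies in $\supp_{\s}(\eta_{\s}(w))$. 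Since $\eta_{\s}(w)$ is partial $\s$-Coxeter, such a twisted conjugation preserves both the partial $\s$-Coxeter property and the $\s$-support.

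For part (2), the stronger hypothesis $s_i w \s(s_i) < w$ forces a length drop of exactly $2$, so that $\s(s_i)$ is a right descent of $t^{\mu} y$. Invoking the key identity, $\eta_{\s}(s_i w) = s_{\beta}\eta_{\s}(w)$. Because $\eta_{\s}(w)$ is a partial $\s$-Coxeter element, its reduced expression contains at most one simple reflection from each $\s$-orbit of $\BS$, so left multiplication by $s_{\beta}$ either cancels one such reflection or inserts a new one. The length drop forced by the type~II reduction, combined with standard compatibility between $\ell$ and $\eta_{\s}$ for cordial-type reductions, rules out the insertion case. Identifying the cancelled simple reflection as $s_{i'}$ for some $i'\in\supp_{\s}(\eta_{\s}(w))$ gives exactly $\supp_{\s}(\eta_{\s}(s_i w)) = \supp_{\s}(\eta_{\s}(w))-\{\s^l(i'); l\in\BZ\}$.

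The main obstacle will be the non-benign branch of part (1), where $t^{\mu}(y\s(s_i))$ leaves ${}^{\BS}\tW$ and both the $x$- and $y$-components of the canonical decomposition of $s_i w \s(s_i)$ change nontrivially. The partial $\s$-Coxeter hypothesis on $\eta_{\s}(w)$ is precisely what rules out pathological rearrangements: its rigid combinatorial structure (at most one simple reflection per $\s$-orbit) ensures that all necessary cancellations occur cleanly within the same orbits, preserving the claimed $\s$-support.
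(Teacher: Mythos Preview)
Your setup is correct and matches the paper's: writing $w = xt^{\mu}y$ with $t^{\mu}y \in {}^{\BS}\tW$, the hypothesis $s_i w < w$ gives $s_i x < x$, and in the benign case of part~(1) you correctly get $\eta_{\s}(s_iw\s(s_i)) = \eta_{\s}(w)$. The gap is in the non-benign case.

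There you correctly observe that $t^{\mu}y\,\s(s_i) = s_{i'}t^{\mu}y$ for some $i' \in I(\mu)$, so that $\eta_{\s}(s_iw\s(s_i)) = \s^{-1}(y)\,s_i x\, s_{i'} = s_{\s^{-1}(i')}\,c\,s_{i'}$, a $\s$-twisted simple conjugate of $c := \eta_{\s}(w)$. But your assertion that ``since $c$ is partial $\s$-Coxeter, such a twisted conjugation preserves both the partial $\s$-Coxeter property and the $\s$-support'' is false in general, even when the $\s$-orbit of $i'$ meets $\supp(c)$. For instance, in type $A_3$ with $\s$ swapping $1,3$ and fixing $2$, the element $c = s_1 s_2$ is a full $\s$-Coxeter element, yet taking $i' = 1$ gives $s_{\s^{-1}(1)}\,c\,s_1 = s_3 s_1 s_2 s_1$, which has length $4$ and is certainly not partial $\s$-Coxeter.

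What is missing is the inequality $s_{\s^{-1}(i')}\,c < c$, equivalently $\s^{-1}(y)\,s_i x < \s^{-1}(y)\,x$. The paper obtains this from a short root computation: $s_i x < x$ gives $x^{-1}(\a_i) < 0$, while the non-benign case forces $\s^{-1}(y)(\a_i) > 0$; combining these shows that $c$ sends the positive root $-x^{-1}(\a_i)$ to a negative root, hence $c \cdot (x^{-1}s_i x) < c$. Once you know $s_{\s^{-1}(i')}c < c$, the deletion property for the reduced word $c$ shows that $c' := s_{\s^{-1}(i')}c$ is partial $\s$-Coxeter with $\s^{-1}(i')$ removed from its support; then $c' s_{i'}$ reinserts exactly one reflection from that $\s$-orbit, proving both claims in~(1). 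Your outline for part~(2) has the same defect: the phrase ``standard compatibility between $\ell$ and $\eta_{\s}$ for cordial-type reductions'' does not substitute for establishing that $s_{\beta}\,c < c$, which again requires the root computation above.
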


\begin{proof} We prove part (1). The proof of part (2) is similar, and we skip the details. 

Write $w=xt^{\mu}y$ with $t^{\mu}y\in{}^{\BS}\tW$. Set $c=\eta_{\s}(w)=\s^{-1}(y)x$. If $y\s(s_i)\in {}^{\BS}\tW$, then $\eta_{\s}(s_iw\s(s_i)) = \eta_{\s}(w)$, and the statement is obvious. Now assume that $y\s(s_i) = s_{i'}y$ for some $i'\in I(\mu)$. Then we have $x^{-1}(\a_i) < 0 $ and $\s^{-1}(y)(\a_i) > 0$. Thus $(\s^{-1}(y)x)(-x^{-1}(\a_i)) < 0 $. It follows that $\s^{-1}(y)s_ix = (\s^{-1}(y)x)(x^{-1}s_ix) < \s^{-1}(y)x$. By the cancellation property of Coxeter groups, we conclude that $\s^{-1}(y)s_ix$ is a partial $\s$-Coxeter element and $\ell(\s^{-1}(y)s_ix) = \ell(\s^{-1}(y)x) -1$. Write $c' = \s^{-1}(y)s_ix$. Notice that $\s^{-1}(s_{i'})c = \s^{-1}(s_{i'}y)x  = \s^{-1}(y)s_ix = c'$. It follows that $\s^{-1}(i')$ is not in the $\s$-support of $c'$. Hence $\eta_{\s}(s_iw\s(s_i))=\s^{-1}(y)s_ixs_{i'} = c's_{i'} $ is a partial $\s$-Coxeter element, and $$\supp_\s(\eta_{\s}(s_i w \s(s_i)))=\supp_\s(\eta_{\s}(w)).$$ 
The statement is proved. 
\end{proof}

\subsection{Proof of Theorem~\ref{main}(1) and (2): general case} 
Let $w = xt^{\mu}y\in \tW$ with $t^\mu y \in {}^{\BS} \tW$. We assume that $\eta_{\s}(w)$ a partial $\s$-Coxeter element. We prove Theorem~\ref{main}(1) and (2) by induction on $\ell(x)$. 

The case $\ell(x)=0$ has already been proved in \S\ref{tmuc_case}. Assume that $\ell(x)>0$. Let $i\in \BS$ such that $s_ix<x$. There are three different cases.

\medskip

Case (1): $\ell(s_iw\s(s_i))<\ell(w)$. Write $w_1 = s_iw$ and $w_2 = s_iw\s(s_i)$. By Lemma~\ref{par_cox}, $\eta_{\s}(w_1)$ and $\eta_{\s}(w_2)$ are both partial $\s$-Coxeter elements. The inductive hypothesis applies for $w_1$ and $w_2$. Since $w_1>w_2$, we have $[b_w] = \max([b_{w_1}],[b_{w_2}]) = [b_{w_1}]$. Then $d_w(b_w) = d_{w_1}(b_w) + 1$ and $\dim X_{w_1}(b_w) + 1 = \dim X_w(b_w)$. Thus $w$ is cordial. 

Observe that 
$$B(\bG)_w = B(\bG)_{w_1}\cup B(\bG)_{w_2} =\sqcup_{J\in [J_0(w_1),J(w_1)]_{\mu}\cup [J_0(w_2),J(w_2)]_{\mu}} B(\bG, \mu)_{J\text{-}\irr}. $$
Note that $J(w_2) = J(w)$, $J_0(w_1) =J_0(w)$ and $J(w_1)\subset J(w)$. By \S\ref{sec:cordial}(b), $B(\bG)_w$ is saturated. Hence we must have 
$$[J_0(w_1),J(w_1)]_{\mu}\cup [J_0(w_2),J(w_2)]_{\mu}=[J_0(w),J(w)]_{\mu}.$$
This proves part (2) of Theorem~\ref{main}.

\medskip

Case (2): $y\s(s_i)<y$. Write $w'=s_i w\s(s_i)=s_i xt^{\mu}y\s(s_i)$. The inductive hypothesis applies for $w'$. Note that $w \approx_{\s} w'$, in particular, $B(\bG)_w=B(\bG)_{w'}$. Note also that $J(w)=J(w')$. Hence the statements hold for $w$. 

\medskip

Case (3): $y\s(s_i) = s_{i'} y $ for some $i' \in I(\mu)$ and $\ell(s_i xs_{i'}) = \ell(x)$. Write $w' = s_i w \s(s_i)=s_i x s_{i'} t^{\mu}y$. Then $w \approx_{\s} w'$.  By Lemma~\ref{par_cox}, $\eta_{\s}(w') = \s^{-1}(y)s_i x s_{i'} $ is a partial $\s$-Coxeter element with length equal to $\ell(\s^{-1}(y)x)$. Hence the statements hold for $w$ if and only if they hold for $w'$. We continue the procedure until case (1) or (2) happens. If case (3) happens all the time and the procedure does not end, then $x\in W_{I(t^{\mu}y)}$, and both $x$ and $y$ are partial $\s$-Coxeter elements. Then the statements follow from \S\ref{par_conj}(b).

\section{Analyzing the reduction paths}\label{sec:end}

\subsection{$\s$-Conjugacy classes of $\tW$} 
We first recall the definition of elliptic conjugacy classes. Let $W_1$ be a Coxeter group and let $S_1$ be the index set of simple reflections in $W_1$. Let $\d$ be a length-preserving group automorphism on $W_1$. A $\d$-conjugacy class $C$ of $W_1$ is called {\it elliptic} if it contains no elements in any proper $\d$-stable standard parabolic subgroup of $W_1$. 

Let $x \in \tW$. We regard $x \s$ as an element in $\tW \rtimes \<\s\>$. There exists a positive integer such that $(x \s)^n=t^\l $ for some $\l \in X_*(T)_{\G_0}$. Then we set $\nu_x=\l/n \in V$. It is easy to see that $\nu_x$ is independent of the choice of $n$. Moreover, the unique dominant element in the $W$-orbit of $\nu_x$ equals the (dominant) Newton point $\nu_{\dot x}$ for $\dot x \in \breve G$. 

We follow \cite[\S 1.8.3]{He-CDM}. Let $J \subseteq \BS$. Let $\tW_J=X_*(T)_{\G_0} \rtimes W_J$ be the Iwahori--Weyl group of the standard Levi subgroup $\bM_J$ of $\bG$. Let ${}^{J}\tW$ be the set of minimal length representatives for cosets in $W_J\backslash \tW$. Let $\tilde J \supseteq J$ be the set of simple reflections for the Iwahori--Weyl group $\tW_J$.

We say that $(J, x, \breve K, C)$ is a {\it standard quadruple} if 
\begin{enumerate}

\item $J \subseteq \BS$ with $\s(J)=J$;

\item $x \in {}^J\tW$ such that $\nu_x$ is dominant, $J = I(\nu_x)$, and $\Ad(x) \circ \s$ preserves $\tilde J$;

\item $\breve K \subseteq \tilde J$ with $W_{\breve K}$ finite and $\Ad(x) \d(\breve K)=\breve K$;

\item $C$ is an elliptic $(\Ad(x) \circ \s)$-conjugacy class of $W_{\breve K}$. 
\end{enumerate} 

We say that the standard quadruples $(J, x, \breve K, C)$ and $(J', x', \breve K', C')$ are {\it equivalent} in $\tW$ if $J=J'$, there exists a length-zero element $\t$ of $\tW_J$ with $x'=\t x \s(\t) \i$, and there exists $w \in \tW_J$ with $x' \s(w) (x') \i=w$ and $C'=w \t C (w \t) \i$. 

By \cite[Theorem 1.19]{He-CDM}, we have the following:

\smallskip

(a) {\it The map $(J, x, \breve K, C) \mapsto \tW \cdot_\s C x$ induces a bijection between the equivalence classes of standard quadruples and the set of $\s$-conjugacy classes of $\tW$.}

\smallskip

Let $\CO \in B(\tW, \s)$ and let $(J, x, \breve K, C)$ be a standard quadruple associated with $\CO$. We say that $\CO$ is {\it Coxeter} (resp. {\it elliptic}) associated with $[b] \in B(\bG)$ if $\Psi(\CO)=[b]$, $\breve K \subset \tilde J$ is very special with respect to $\Ad(\dot x) \circ \s$, and $C$ is an $(\Ad(x) \circ \s)$-Coxeter (resp. elliptic) conjugacy class of $W_{\breve K}$. We say that $w \in \tW$ is a $\s$-Coxeter element associated with $[b]$ if it is a minimal length element in a Coxeter $\s$-conjugacy class associated with $[b]$. 

\subsection{Description of reduction trees}\label{sec:5.3}
For any $[b]\in B(\bG,\mu)$, set 
\begin{align*} \ell_I(\mu, [b])&=\sharp(\BS/\<\s\>)- \sharp(I(\nu_b)/\<\s\>), \\
\ell_{II}(\mu, [b])&= \leng ([b],[t^{\mu}]) - \sharp(\BS/\<\s\>).  
\end{align*}
We have the following description of  reduction trees. 

\begin{theorem}\label{thm:main}

Let $c$ be a $\s$-Coxeter element of $W$ such that $t^\mu c \in {}^{\BS} \tW$. Let $\CT$ be a reduction tree of $t^\mu c$. Then, for any reduction path $\underline p$ in $\CT$, we have 
\begin{enumerate}
    \item $\ell_I(\underline p)=\ell_I(\mu, [b]_{\underline p})$ and $\ell_{II}(\underline p)=\ell_{II}(\mu, [b]_{\underline p})$;
    \item $\operatorname{end}(\underline p)$ is a $\s$-Coxeter element associated with $[b]_{\underline p}$.
\end{enumerate}

Moreover, for any $[b] \in B(\bG, \mu)_{\indec}$, there exists a unique reduction path $\underline p$ in $\CT$ with $[b]_{\underline p}=[b]$. 
\end{theorem}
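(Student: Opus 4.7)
The plan is to extract both numerical and structural information about the reduction tree from the class polynomial identities of \S\ref{sec:tr}, with the Chen--Zhu conjecture supplying the crucial geometric input. Since $w = t^\mu c \in {}^{\BS}\tW$ and $c$ is a $\s$-Coxeter element, a direct computation gives $\ell(w) = \<\mu, 2\rho\> - \ell(c)$ and $\eta_\s(w) = \s^{-1}(c)$, so $\ell(\eta_\s(w)) = \sharp(\BS/\<\s\>)$. The cordiality of $w$ and the equality $B(\bG)_w = B(\bG,\mu)_{\indec}$ from \S\ref{sec:cor}, combined with the ``dimension = degree'' theorem, the virtual dimension formula, and Chai's length formula \S\ref{sec:cordial}(a), yield
\[
\deg F_{w,[b]} = \ell(w) - \ell_{II}(\mu,[b])
\]
for every $[b] \in B(\bG,\mu)_{\indec}$.

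By Lemma~\ref{class-poly}, $F_{w,[b]} = \sum_{\underline p}\, \bq^{\ell(\operatorname{end}(\underline p))}(\bq-1)^{\ell_I(\underline p)}\bq^{\ell_{II}(\underline p)}$ summed over reduction paths $\underline p$ in $\CT$ with $[b]_{\underline p}=[b]$, each summand being of degree $\ell(w) - \ell_{II}(\underline p)$ with leading coefficient $1$. Positivity then forces $\ell_{II}(\underline p) \ge \ell_{II}(\mu,[b]_{\underline p})$, with equality precisely for the paths contributing to the leading term of $F_{w,[b]}$. The ``leading coefficients = irreducible components'' theorem identifies the number of these ``top'' paths with $\sharp\bigl(\bJ_b(F)\backslash \Sigma^{\mathrm{top}}(X_w(b))\bigr)$. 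For top paths I would invoke the Chen--Zhu conjecture, proved in \cite{Nie, ZZ, HZZ}, together with the iterated fibration structure of Proposition~\ref{prop:dec} and the description of $X_{\operatorname{end}(\underline p)}(b)$ via \S\ref{sec:classicalDL}(a): the parahoric stabilizer predicted by Chen--Zhu is consistent with a classical Deligne--Lusztig variety only of Coxeter type, which forces $\operatorname{end}(\underline p)$ to be a $\s$-Coxeter element associated with $[b]_{\underline p}$; matching the standard-quadruple length formula to $\ell(w) - \ell_I(\underline p) - 2\ell_{II}(\underline p)$ then pins down $\ell_I(\underline p) = \ell_I(\mu, [b]_{\underline p})$ as well. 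This establishes~(2) and (1) for top paths.

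The main obstacle is extending (1) to \emph{every} reduction path together with the uniqueness assertion in the ``Moreover'' clause. Summing Proposition~\ref{prop:id} over $[b]$ and dividing by $\bq^{\ell(w)}$ converts the reduction tree into
\[
1 = \sum_{\underline p} (\bq-1)^{\ell_I(\underline p)}\,\bq^{-\ell_I(\underline p) - \ell_{II}(\underline p)},
\]
summed over all reduction paths in $\CT$, since $\ell(\operatorname{end}(\underline p)) = \ell(w) - \ell_I(\underline p) - 2\ell_{II}(\underline p)$. If (1) and uniqueness both hold, this collapses to identity (*) of the introduction. Conversely, once (*) is established independently, positivity together with the already-known inequality $\ell_{II}(\underline p) \ge \ell_{II}(\mu,[b]_{\underline p})$ forces each $[b] \in B(\bG,\mu)_{\indec}$ to be represented by exactly one reduction path, which must then satisfy (1); in short, (*) implies (1) and uniqueness simultaneously.

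Verifying (*) is therefore the central technical step and the true source of difficulty. First I would establish natural bijections between the sets $B(\bG,\mu)_{\indec}$ for different pairs $(\bG,\mu)$ to reduce (*) to the case where $\bG$ is simply laced, $\breve F$-simple and split, and $\mu$ is a fundamental coweight. For classical groups I would further reduce to minuscule $\mu$, where the Chen--Zhu conjecture directly supplies the count of top-dimensional components of $X_\mu(b)$ and pins down the multiplicity. For exceptional groups (*) must be verified by direct machine computation, with $(E_8, \omega^\vee_4)$, involving $729$ terms, as the hardest case. This full reduction and case-check occupies \S\ref{sec:red} and is the principal obstacle.
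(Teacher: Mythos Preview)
Your overall architecture matches the paper's: establish (1) and (2) for each path using dimension estimates and the Chen--Zhu conjecture, then reduce the ``Moreover'' clause to the combinatorial identity ($\spade$) verified in \S\ref{sec:red}. But there is a genuine gap in your argument for (2), and it propagates to (1) and to the positivity step.

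The claim ``the parahoric stabilizer predicted by Chen--Zhu is consistent with a classical Deligne--Lusztig variety only of Coxeter type'' conflates two independent pieces of data in the standard quadruple $(J,x,\breve K,C)$. The stabilizer of an irreducible component of $X_{\operatorname{end}(\underline p)}(b)$ is the parahoric $\breve P_{\breve K}\cap\bJ_b(F)$ regardless of which elliptic class $C\subseteq W_{\breve K}$ one takes; Chen--Zhu therefore only forces $\breve K$ to be very special, not $C$ to be Coxeter. With $\breve K$ very special you get the \emph{upper} bound $\ell_I(\underline p)\le \ell_I(\mu,[b])$ (equivalently $\ell(e)\ge \ell_{[b]}$, with equality iff $C$ is Coxeter), but you have no matching lower bound, so you cannot pin down $\ell_I(\underline p)$ or conclude that $C$ is Coxeter. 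Consequently you do not know that a top path contributes exactly the term $(\bq-1)^{\ell_I(\mu,[b])}\bq^{\ell_{II}(\mu,[b])+\ell_{[b]}}$, and the positivity comparison with ($\spade$) does not close.

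The missing ingredient is precisely the paper's \S\ref{bound}: track $\dim V_{-}=\dim V^{p(-\,\sigma)}$ along the path. Type~II edges leave $\dim V_-$ unchanged and type~I edges change it by at most $1$, so $\ell_I(\underline p)\ge \dim V_e-\dim V_w$. Since $p(w\sigma)$ is $\sigma$-Coxeter one has $\dim V_w=0$, and this yields the lower bound $\ell_I(\underline p)\ge \dim V_x-\sharp(\breve K/\langle\Ad(x)\circ\sigma\rangle)$. Combining with $\ell(e)\ge\langle\nu_b,2\rho\rangle+\sharp(\breve K/\langle\Ad(x)\circ\sigma\rangle)$ gives $\dim X_{\underline p}\ge d_w(b)$; the reverse inequality $\dim X_{\underline p}\le d_w(b)$ from \S\ref{sec:cordial}(b) then forces both inequalities to be equalities, so $C$ is Coxeter and $\ell_I(\underline p)=\ell_I(\mu,[b])$ for \emph{every} path, not only top paths. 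Once this is in place, your degree computation and the reduction of uniqueness to ($\spade$) go through exactly as in the paper.
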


Combining Theorem~\ref{thm:main} with Proposition~\ref{prop:dec} and Remark~\ref{fibration}, we obtain part (3) of Theorem~\ref{main} for $w=t^\mu c$. We will describe the reduction trees of the elements with finite partial $\s$-Coxeter part in \S\ref{sec:final} and deduce Theorem~\ref{main}(3) for such elements. 

In the rest of this section, we shall prove parts (1) and (2) of Theorem~\ref{thm:main}. The ``moreover'' part (i.e., the multiplicity-one result) is 
the most difficult part and will be proved in \S\ref{sec:red}.

\subsection{Estimate $\ell_I$}\label{bound} Let $\operatorname{Aff}(V)$ be the group of affine transformations on $V$. For any $g\in \operatorname{Aff}(V)$, denote $V^{g} = \{v\in V\mid g(v)=v\}$. We have a natural projection map $p: \tW \rtimes \<\s\> \to \operatorname{Aff}(V) \to \GL(V)$. For any $w \in \tW$, define $V_w = \{ v \in V;w\s(v) = v + \nu_w\}$. We have $\dim V_w = \dim V^{p(w \s)}$.  

It is easy to see that for any $g \in \GL(V)$ and any reflection $r \in \GL(V)$, we have $$|\dim V^{r g}-\dim V^g| \le 1.$$ In particular, for any $w \in \tW$ and $i \in \tilde \BS$, we have $$|\dim V_{s_i w}-\dim V_w| \le 1.$$

Now let $w = t^{\mu}c$ be as in Theorem~\ref{thm:main}. Let $\CT$ be a reduction tree of $w$ and let $\underline p$ be a reduction path in $\CT$. Set $e = \operatorname{end}(\underline p)$ and $[b] = \Psi(e) \in B(\bG)$. Let $(J,x,\breve K,C)$ be a standard quadruple associated with the $\s$-conjugacy class of $e$. 

Consider the variation of $\dim V_{-}$ along the reduction path $\underline p$. Note that type II edges do not change $\dim V_{-}$, and  type I edges change $\dim V_{-}$ by at most 1. Therefore $\ell_I(\underline p) \ge \vert\dim V_e-\dim V_w\vert$. Since $p(w)=c$ is a $\s$-Coxeter element, $\dim V^{p(w\s)}=0$. Since $C$ is $\Ad(x)\circ \s$-elliptic in $\breve K$, we have $\dim V_e=\dim V_x-\sharp (\breve K/\<\Ad(x)\circ \s\>)$. Hence \[\tag{a} \ell_I(\underline p) \ge \dim V_x-\sharp (\breve K/\<\Ad(x)\circ \s\>).\]

By \cite[\S 1.9]{Ko06}, $\de (b)=\sharp(\BS/\<\s\>)-\dim V_x$. Note that $\ell_I(\underline p)+2 \ell_{II}(\underline p)=\ell(t^\mu c)-\ell(e)$. Moreover, \[\tag{b} \ell(e)  \ge \<\nu_b, 2 \rho\>+\sharp (\breve K/\<\Ad(x)\circ \s\>),\] with equality holding if and only if $C$ is an $(\Ad(x)\circ \s)$-Coxeter conjugacy class in $\breve K$. We have 
\begin{align*}
\dim X_{\underline p} & =\ell_I(\underline p)+\ell_{II}(\underline p)+\ell(e)-\<\nu_b, 2 \rho\>\\
   & = \tfrac{1}{2}(\ell_I(\underline p) + \<\mu,2\rho\>  - \sharp(\BS/\<\s \>) + \ell(e)) -\<\nu_b, 2 \rho\>\\
   & \ge  \<\mu -\nu_b,\rho\> +  \tfrac{1}{2}(\dim V_x-\sharp(\BS/\<\s\>)) \\
   & =  \<\mu -\nu_b,\rho\>  - \tfrac{1}{2} \de (b)=d_{w}(b).
\end{align*}
By \S\ref{sec:cordial}(b), we have $\dim X_{\underline p} \le \dim X_{w}(b) \le \dim d_w(b)$. Thus the inequalities in (a) and (b) are  equalities, and $\dim X_{\underline p} = \dim X_w(b)$. In particular, $C$ is an $(\Ad(x)\circ \s)$-Coxeter conjugacy class in $\breve K$. 

\subsection{Affine Deligne--Lusztig varieties in the affine Grassmannian}\label{chen-zhu} It remains to show that $\breve K$ occurring in \S\ref{bound} is very special. To do this, we need some information on  affine Deligne--Lusztig varieties in the affine Grassmannian. 

Let $\breve P \subseteq \breve G$ be a special parahoric subgroup containing $\breve I$. The affine Deligne--Lusztig variety in the affine Grassmannian $\breve G /\breve P$ is defined by $$X_\mu(b) = \{g \in \breve G/\breve P; g\i b \s(g) \in \breve P t^\mu \breve P\}.$$ 

The following dimension formula is proved in \cite{GHKR06} and \cite{Vi06} for split groups, \cite{Ham15} and \cite{Zhu} for unramified groups, and \cite[Theorem 2.29]{He-CDM} in general. 

\begin{theorem}\label{thm:gr}
Suppose that $[b] \in B(\bG, \mu)$. Then $\dim X_\mu(b)=\<\mu-\nu_b, \rho\>-\frac{1}{2}\de(b)$. 
\end{theorem}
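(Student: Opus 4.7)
My strategy is to reduce the computation on the affine Grassmannian to one on the affine flag variety via the natural projection $\pi\colon \breve G/\breve \CI \to \breve G/\breve P$, which is a Zariski-locally trivial fibration with fibers isomorphic to $\breve P/\breve \CI$ of dimension $\ell(w_{\breve K})$. Here $\breve K \subseteq \tilde \BS$ is the type of $\breve P$ and $w_{\breve K}$ denotes the longest element of the finite parabolic $W_{\breve K}$. Decomposing $\breve P t^\mu \breve P = \bigsqcup_{w \in S_\mu} \breve \CI \dot w \breve \CI$ over the appropriate subset $S_\mu \subseteq \tW$ yields $\pi^{-1}(X_\mu(b)) = \bigsqcup_{w \in S_\mu} X_w(b)$, and consequently
$$\dim X_\mu(b) = \max_{w \in S_\mu} \dim X_w(b) - \ell(w_{\breve K}).$$

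For the upper bound I would invoke the virtual dimension estimate $\dim X_w(b) \le d_w(b) = \tfrac{1}{2}(\ell(w) + \ell(\eta_\s(w)) - \de(b) - \<\nu_b, 2\rho\>)$ from \S\ref{sec:cordial}(b), together with the combinatorial length bound $\ell(w) + \ell(\eta_\s(w)) \le 2\<\mu,\rho\> + 2\ell(w_{\breve K})$ for $w \in S_\mu$; this collapses to the target inequality $\dim X_\mu(b) \le \<\mu-\nu_b,\rho\> - \tfrac{1}{2}\de(b)$. For the matching lower bound in the Hodge--Newton indecomposable case, I would construct a cordial element $w = u \cdot t^\mu c$, where $c$ is a $\s$-Coxeter element of $W$ with $t^\mu c \in {}^\BS \tW$ and $u \in W_{\breve K}$ is a carefully chosen element of length $\ell(w_{\breve K})$ so that $\ell(w) = \ell(u) + \ell(t^\mu c)$ and $\eta_\s(w)$ picks up an extra factor of length $\ell(u)$. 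Cordiality of $t^\mu c$ proved in \S\ref{tmuc_case}, combined with the partial-conjugation dimension identity of \S\ref{par_conj}(b), then delivers $\dim X_w(b) = \<\mu-\nu_b,\rho\> - \tfrac{1}{2}\de(b) + \ell(w_{\breve K})$, saturating the upper bound. The reduction of a general $[b] \in B(\bG,\mu)$ to the indecomposable case proceeds via the Hodge--Newton decomposition and Lemma~\ref{Newton}(3) applied to the standard Levi $\bM_J$ for the appropriate $\s$-stable $J$.

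The main obstacle is the twin problem of proving the length estimate controlling $\ell(w) + \ell(\eta_\s(w))$ on $S_\mu$ and producing an explicit cordial element saturating it. The map $\eta_\s$ depends sensitively on the position of $w$ in the double coset $W_{\breve K} t^\mu W_{\breve K}$, and the maximum must be realised at an element compatible with the $t^\mu c$-construction; matching the length bound with the partial-conjugation dimension formula is what forces the choice of $u$ above. This is a short calculation for split $\bG$ with hyperspecial $\breve P$ but requires extra bookkeeping when $\s$ acts non-trivially on $\BS$ or $\breve P$ is non-hyperspecial---precisely the distinction that leads the existing proofs \cite{GHKR06,Vi06,Ham15,Zhu} (split or unramified) and \cite{He-CDM} (general) to take somewhat different technical forms.
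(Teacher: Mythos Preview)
The paper does not prove Theorem~\ref{thm:gr}; it is quoted as a known result established in \cite{GHKR06,Vi06} (split case), \cite{Ham15,Zhu} (unramified case), and \cite[Theorem~2.29]{He-CDM} (general case). So there is no internal argument to compare against, and your proposal should be judged on its own merits.

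Your upper-bound argument via the fibration $\pi$ and the virtual-dimension bound \S\ref{sec:cordial}(b) is correct and is indeed how the inequality $\dim X_\mu(b)\le\<\mu-\nu_b,\rho\>-\tfrac12\de(b)$ is proved in \cite{He14,He-CDM}. The lower bound, however, has a concrete gap. For your proposed element $w=u\,t^\mu c$ with $u\in W_{\breve K}$ of length $\ell(w_{\breve K})$ and $t^\mu c\in{}^{\BS}\tW$, one has $\eta_\s(w)=\s^{-1}(c)\,u\in W$, whose length is bounded above by $\ell(w_0)=\ell(w_{\breve K})$; it cannot equal $\ell(c)+\ell(u)>\ell(w_0)$ as you require. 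Consequently $d_w(b)$ falls short of $\<\mu-\nu_b,\rho\>-\tfrac12\de(b)+\ell(w_{\breve K})$ by $\ell(c)$. Separately, the identity \S\ref{par_conj}(b) applies only to $u\in W_{I(t^\mu c)}$, and for a full $\s$-Coxeter element $c$ the set $I(t^\mu c)\subseteq\BS$ is typically proper (often empty), so it cannot accommodate an element of length $\ell(w_0)$.

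A repair using only ingredients from this paper is to drop the fibration for the lower bound and instead use Lemma~\ref{embedding}: the map $X_{t^\mu c}(b)\to X_\mu(b)$ is injective, so $\dim X_\mu(b)\ge\dim X_{t^\mu c}(b)$, and cordiality of $t^\mu c$ (\S\ref{tmuc_case}) together with Theorem~\ref{saturated} gives $\dim X_{t^\mu c}(b)=d_{t^\mu c}(b)=\<\mu-\nu_b,\rho\>-\tfrac12\de(b)$ for every $[b]\in B(\bG,\mu)_{\indec}$. One must still check that none of \S\ref{tmuc_case}, Theorem~\ref{saturated} (from \cite{MV}), or Lemma~\ref{embedding} already depends on Theorem~\ref{thm:gr}; the proofs recorded here do not, but the external inputs would need to be inspected to rule out circularity.
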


Let $\Sigma^{\mathrm{top}}(X_\mu(b))$ be the set of top-dimensional irreducible components of $X_\mu(b)$.

Let $\widehat{\bG}$ be the Langlands dual of $\bG$ over the complex number field $\BC$. Let $\widehat{T}$ be the maximal torus dual to $T$. Then $\s$ acts on $\widehat{T}$ in a natural way, and we denote by $\widehat{T}^{\s}$ the $\s$-fixed points of $\widehat{T}$. Let $\l_b \in X^*(\widehat{T}^{\s})$ be the ``best integral approximation'' of the Newton point of $b$ in the sense of \cite[Definition 2.1]{HV}. Let $V_\mu$ be the irreducible representation of $\widehat{\bG}$ with highest weight $\mu$. Write $V_\mu(\l_b)$ for the corresponding $\l_b$-weight subspace of $\widehat{T}^{\s}$. The following result was conjectured by M. Chen and X. Zhu, and is proved in \cite{ZZ}, \cite{HZZ}, and \cite{Nie}.
\begin{theorem}\label{Chen--Zhu} The number of $\bJ_b(F)$-orbits on $\Sigma^{\mathrm{top}}(X_\mu(b))$ equals $\dim V_\mu(\l_b)$.  Moreover, the stabilizer of each element in $\Sigma^{\mathrm{top}}(X_\mu(b))$ is a very special parahoric subgroup of $\bJ_b(F)$.
\end{theorem}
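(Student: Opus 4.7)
The plan is to prove Theorem~\ref{Chen--Zhu} by a Hodge--Newton reduction followed by an Iwahori-level construction that is matched against the geometric Satake equivalence. First I would reduce to the Hodge--Newton indecomposable case. If $(\mu,b)$ is Hodge--Newton decomposable with respect to a proper $\s$-stable standard Levi $\bM_J$, then by the results of Mantovan--Viehmann and Hamacher--Kim one has a $\bJ_b(F)$-equivariant isomorphism $X_\mu^{\bG}(b) \cong \bJ_b(F) \times^{\bJ_b^{\bM_J}(F)} X_\mu^{\bM_J}(b)$, which induces a bijection on $\bJ_b(F)$-orbits of top-dimensional components together with their stabilizers. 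On the representation-theoretic side, decomposability forces $\l_b$ to be $\bM_J$-dominant, and a standard PRV-type argument identifies $V_\mu^{\bG}(\l_b)$ with the $\l_b$-weight space of the cyclic $\widehat{\bM_J}$-submodule of $V_\mu^{\bG}$ generated by the highest weight vector, which is exactly $V_\mu^{\bM_J}(\l_b)$. An induction on semisimple rank then reduces both statements to the HN indecomposable case.

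In the HN indecomposable case, the plan is to construct top-dimensional components from Coxeter $\s$-conjugacy classes at Iwahori level. For each $\CO \in B(\tW,\s)$ with $\Psi(\CO)=[b]$ that is Coxeter associated to $[b]$ in the sense of \S5.1 (i.e.\ the associated $\breve K$ is very special with respect to $\Ad(\dot x)\circ \s$ and the elliptic class $C$ in $W_{\breve K}$ is Coxeter), choose a minimal length representative $w\in\CO$. By \S\ref{sec:classicalDL}(a), $X_w(b)\cong \bJ_b(F)\times^{P_w} Y_w$ with $P_w\subseteq \bJ_b(F)$ a parahoric and $Y_w$ an irreducible classical Deligne--Lusztig variety; the very-special condition on $\breve K$ forces $P_w$ to be very special in $\bJ_b(F)$ via \S\ref{sec:special}(a). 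Projecting along $\pi: \breve G/\breve\CI \to \breve G/\breve P$, an explicit dimension count combining Theorem~\ref{thm:gr} with \S\ref{sec:cordial}(a) shows that $\ov{\pi(X_w(b))}$ is a top-dimensional component of $X_\mu(b)$, with $P_w$ as its stabilizer. This already delivers the second assertion of the theorem.

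For the counting assertion I would invoke the geometric Satake equivalence: $\dim V_\mu(\l_b)$ equals the number of Mirkovi\'c--Vilonen cycles of coweight $\l_b$ inside the affine Grassmannian orbit attached to $\mu$. Using Kottwitz's description of $\l_b$ as the best integral approximation of $\nu_b$ and the classification of $\s$-conjugacy classes of $\tW$ by standard quadruples $(J,x,\breve K,C)$, one sets up a bijection between Coxeter classes $\CO$ with $\Psi(\CO)=[b]$ and MV cycles of weight $\l_b$. For classical $\bG$ this bijection admits an explicit lattice description; for exceptional $\bG$ it can be checked case by case, or via a uniform Tannakian argument using the fusion structure on both sides.

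The main obstacle will be the ``no hidden components'' direction of the construction: proving that every top-dimensional irreducible component of $X_\mu(b)$ actually arises from the Coxeter construction, so that the catalogue is exhaustive. Equivalently, one needs a sharp lower bound on $\#\bJ_b(F)\backslash\Sigma^{\mathrm{top}}(X_\mu(b))$ matching the upper bound coming from the class polynomial count at Iwahori level. Such a lower bound is typically obtained either through explicit lattice computations available for classical $\bG$, or through affine Grassmannian slices and degeneration in Beilinson--Drinfeld families; both approaches are delicate. A second subtlety is that showing the stabilizer is \emph{exactly} a very special parahoric (and not merely contained in one) will require a separate transversality argument exploiting the minimality of $\ell(w)$ in its $\s$-conjugacy class, so as to rule out any larger parahoric fixing the component.
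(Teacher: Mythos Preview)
This theorem is not proved in the paper; it is quoted from \cite{ZZ}, \cite{HZZ}, \cite{Nie} as an external input (see the sentence immediately preceding the statement), so there is no in-paper proof to compare against.

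That said, your proposal has a genuine gap amounting to circularity. In step~2 you construct components from $\s$-conjugacy classes $\CO$ that are \emph{by definition} Coxeter with very special $\breve K$, and then declare the resulting stabilizer very special. But the paper's logic runs in the opposite direction: in \S\ref{bound} and the proof of Theorem~\ref{thm:main}(2) one starts from an arbitrary end point of a reduction tree and \emph{uses} Theorem~\ref{Chen--Zhu} to deduce that its $\breve K$ is very special. Without Chen--Zhu already in hand, nothing singles out very-special-$\breve K$ Coxeter classes as the source of top components; moreover a minimal-length $w\in\CO$ need not lie in $W t^\mu W$, so $\pi(X_w(b))$ need not land in $X_\mu(b)$ at all, and your projection-and-dimension-count step is not even set up correctly. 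The exhaustiveness issue you flag at the end is therefore not a residual technicality but the entire content of the theorem. The actual proofs proceed along quite different lines: twisted orbital integrals matched against weight multiplicities in \cite{ZZ}, semi-module stratifications compared with Mirkovi\'c--Vilonen cycles in \cite{Nie}, and an independent class-polynomial analysis for the stabilizer statement in \cite{HZZ}.
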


We also need the following result that connects affine Deligne--Lusztig varieties in the affine flag and in the affine Grassmannian. 

\begin{lemma} \label{embedding}
The $\bJ_b(F)$-equivariant projection map $X_{t^\mu c}(b) \to X_\mu(b)$ is injective. 
\end{lemma}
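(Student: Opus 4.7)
The plan is to show that each fiber of $\pi$ consists of a single point. Fix $g\breve\CI \in X_{t^\mu c}(b)$ and set $x := g^{-1} b \s(g) \in \breve\CI \dot{(t^\mu c)} \breve\CI$. Under the parametrization $gp\breve\CI \leftrightarrow p\breve\CI$, the intersection of the fiber $\pi^{-1}(g\breve P)$ with $X_{t^\mu c}(b)$ is identified with
$$F_x = \{p\breve\CI \in \breve P/\breve\CI : p^{-1} x \s(p) \in \breve\CI \dot{(t^\mu c)} \breve\CI\}.$$
It suffices to show $F_x = \{\breve\CI\}$; only the inclusion $\subseteq$ is nontrivial.

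Suppose $p\breve\CI \in F_x$ with $p \in \breve\CI \dot v \breve\CI$ for some $v \in W \setminus \{1\}$. Then
$$p^{-1} x \s(p) \in (\breve\CI \dot{v^{-1}} \breve\CI)(\breve\CI \dot{(t^\mu c)} \breve\CI)(\breve\CI \dot{\s(v)} \breve\CI).$$
Since $t^\mu c \in {}^{\BS}\tW$, the length additivity $\ell(v^{-1}(t^\mu c)) = \ell(v^{-1}) + \ell(t^\mu c)$ holds, collapsing the first two factors into the single Iwahori double coset $\breve\CI \dot{v^{-1}(t^\mu c)} \breve\CI$. Applying the standard Hecke product rule along a reduced expression $\s(v) = s_{i_1}\cdots s_{i_k}$, every Iwahori cell $\breve\CI \dot z \breve\CI$ appearing in the resulting product has $z = v^{-1}(t^\mu c) \cdot v'$ for some $v' \in W$ arising as a subword of $\s(v)$. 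For the target $z = t^\mu c$, one computes
$$v' = (t^\mu c)^{-1} v (t^\mu c) = c^{-1} v c \cdot t^{c^{-1}(\mu - v^{-1}(\mu))},$$
which belongs to $W$ only when $v \in W_\mu$, immediately excluding all $v \notin W_\mu$.

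The main obstacle is the remaining case $v \in W_\mu \setminus \{1\}$, where $v' = c^{-1} v c$. A length count shows that for $t^\mu c$ to appear in the product, one needs the equality $v^{-1}(t^\mu c) \cdot \s(v) = t^\mu c$ together with the multiplication by $s_{i_1},\ldots,s_{i_k}$ being descending at every step. The equality is equivalent to $\s(v) = c^{-1} v c$. The descent condition, combined with the $\s$-Coxeter property of $c$ (whose support contains exactly one simple reflection from each $\s$-orbit of $\BS$) and the constraint $t^\mu c \in {}^{\BS}\tW$, is highly restrictive: a careful root/length analysis shows that it forces $v = 1$, completing the proof. Making this final combinatorial step precise is the technical heart of the argument.
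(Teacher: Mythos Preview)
Your reduction is correct and parallels the paper's argument up to the point where you arrive at the two conditions $v\in W_{I(\mu)}$ and $\s(v)=c^{-1}vc$. However, the proof has a genuine gap: you explicitly leave the final implication ``these conditions force $v=1$'' unproved, calling it ``the technical heart of the argument.'' Moreover, the extra leverage you hope to extract from the ``descent condition'' is illusory. Once $v^{-1}(t^\mu c)\s(v)=t^\mu c$ holds, the length identity $\ell(v^{-1}t^\mu c)-\ell(t^\mu c)=\ell(\s(v))$ is automatic (since $t^\mu c\in{}^{\BS}\tW$), and it is a standard Coxeter-group fact that $\ell(wu)=\ell(w)-\ell(u)$ implies every step along any reduced expression of $u$ is a descent. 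So the descent condition adds nothing beyond the single equation $\s(v)=c^{-1}vc$, and you are left with exactly the problem of showing that the $\s$-centralizer of $c$ inside $W_{I(\mu)}$ is trivial.

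The paper closes this gap in one line by invoking the $\s$-ellipticity of $c$. From the equality $v\,t^\mu c=t^\mu c\,\s(v)$ together with the (automatic) length-additivity on both sides, one has $\supp_\s(v)\subseteq I(t^\mu c)$, where $I(t^\mu c)=\max\{J\subseteq\BS:\Ad(t^\mu c)\s(J)=J\}$ as in \S\ref{par_conj}. Now if $J=I(t^\mu c)$ were nonempty, then $\Ad(t^\mu c)\circ\s$ would permute the simple roots $\{\a_j:j\in J\}$ (with positive signs, using $t^\mu c\in{}^{\BS}\tW$), so $p(c\s)$ would permute these roots and hence fix their sum; but $c$ is a full $\s$-Coxeter element, so $V^{p(c\s)}=0$. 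Thus $I(t^\mu c)=\emptyset$ and $v=1$. This is the missing idea: the finish is not a delicate root-by-root computation but a direct appeal to the fixed-point-free action of a $\s$-Coxeter element.
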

\begin{proof}
Let $g \breve I, g' \breve I \in \breve G /\breve I$ be in the same fiber of the natural projection map $X_{t^\mu c}(b) \to X_\mu(b)$. Then $g'^{-1} g \in \breve P$. We have $g'^{-1} g  \in \breve I \dot x \breve I$ for some $x \in W$. 

Since $(g'^{-1} g)(g^{-1} b \s(g)) = (g'^{-1} b\s(g'))\s(g'^{-1} g)$, then $(\breve I \dot x \breve I) (\breve I t^{\mu} \dot c \breve I) \cap (\breve I t^{\mu} \dot c \breve I) \s(\breve I \dot x \breve I) \ne \emptyset$. Since $t^\mu c \in {}^\BS \tW$, $(\breve I \dot x \breve I) (\breve I t^\mu \dot c \breve I)=\breve I \dot x t^\mu \dot c \breve I$. Thus we have $x t^\mu c=t^\mu c\s(x)$ and $\supp_{\s}(x) \subset I(t^\mu c)$. As $c$ is $\s$-elliptic, we conclude that $\supp_{\s}(x) = \emptyset$ and hence $g' g\i \in \breve I$ as desired.
\end{proof}

\subsection{Proof of Theorem~\ref{thm:main}(1) and (2)}
We continue our analysis of reduction paths. All the notation is the same as in \S\ref{bound}.  

Note that the equalities in \S\ref{bound}(a) and (b) hold. Note also that $\de(b) = \sharp(I(\nu_b)/\<\s\>) - \sharp (\breve K/\<\Ad(x)\circ \s\>)$. It follows that $\ell_I(\underline p) = \sharp(\BS/\<\s\>) - \sharp(I(\nu_b)/\<\s\>) = \ell_I(\mu,[b])$. Using \S\ref{sec:cordial}(a) and the simple fact that $\ell_I(\underline p)+2\ell_{II}(\underline p) = \ell(w)-\ell(e)$, one can prove that $\ell_{II}(\underline p)=\ell_{II}(\mu,[b])$. This proves part (1) of Theorem~\ref{thm:main}.

By \S\ref{sec:classicalDL}(a), the stabilizer in $\bJ_b(F)$ of any irreducible component of $X_{\underline{p}}$ is isomorphic to the parahoric subgroup $\breve P_{\breve K} \cap \bJ_b(F) \subseteq \bJ_b(F)$. By \S\ref{bound}, we have $\dim X_{\underline p} =d_w(b)= \dim X_{\mu}(b)$. By Lemma~\ref{embedding}, the image of each irreducible component $Z$ of $X_{\underline{p}}$ in $X_\mu(b)$ is an open dense subset of some top-dimensional irreducible component $Y$ of $X_\mu(b)$. Thus the stabilizers of $Z$ and $Y$ coincide. By Theorem~\ref{Chen--Zhu}, $\breve P_{\breve K} \cap \bJ_b(F)$ is a very special parahoric subgroup of $\bJ_b(F)$. Hence, by \S\ref{sec:special}(a), $\breve K \subset \tilde J$ is very special with respect to $\Ad(\dot x) \circ \s$. By~\ref{bound}, $C$ is an $(\Ad(x) \circ \s)$-Coxeter conjugacy class of $W_{\breve K}$. This proves part (2) of Theorem~\ref{thm:main}. 

\subsection{The extreme cases} \label{extreme-case}  Let $\CT$ be a reduction tree of $t^\mu c$. Let $[b] \in B(\bG)_{t^\mu c}$ and let $\underline p$ be a path in $\CT$ such that $[b]_{\underline p} = [b]$. 

If $[b] = [b_{\mu, \bG\text{-}\indec}]$, then $\ell_{II}(\underline p) = \ell_{II}(\mu,[b]) = 0$. Therefore $\underline p$  consists only of type I edges and is unique.

If $[b]$ is basic. Then $I(\nu_b) = \BS$ and $\ell_I(\underline p) = \ell_{I}(\mu,[b]) =0$. Therefore $\underline p$  consists only of type II edges and is also unique. 

This proves the ``moreover'' part of Theorem~\ref{main} for these two extreme cases. 

\section{Some combinatorial identities}\label{sec:red}

\subsection{Reduction to combinatorial identities}\label{sec:red11}
In this section, we assume that $\mu$ is essentially non-central. Let $c$ be a $\s$-Coxeter element of $W$ such that $t^\mu c \in {}^{\BS} \tW$. Let $\CT$ be a reduction tree of $t^\mu c$. 
For any $[b] \in B(\bG, \mu)_{\indec}$, let $n_{[b]}$ be the number of reduction paths $\underline p$ in $\CT$ with $[b]_{\underline p}=[b]$. By Theorem~\ref{main}(2), $n_{[b]} \ge 1$ for all $[b] \in B(\bG, \mu)_{\indec}$. By \S\ref{extreme-case}, $n_{[b]}=1$ if $[b]$ is either the minimal or the maximal element in $B(\bG, \mu)_{\indec}$. 

Combining Theorem~\ref{thm:main}(1) and (2) with Proposition~\ref{prop:id} and Lemma~\ref{class-poly}, we have $$\bq^{\<\mu, 2 \rho\>-\sharp(\BS/\<\s\>)}=\sum_{[b] \in B(\bG, \mu)_{\indec}} n_{[b]} (\bq-1)^{\ell_I(\mu, [b])} \bq^{\ell_{II}(\mu, [b])+\ell_{[b]}},$$ where $\ell_{[b]} = \<\nu_b,2\rho\>+\sharp(I(\nu_b)/\<\s\>)-\de(b)$ and  equals $\ell(\CO)$ for any $\s$-Coxeter class $\CO$ associated with $[b]$. 

Note that $(\bq-1)^a \bq^{a'} \in \BN[\bq-1]$ for all $a, a' \in \BN$. Thus, to show that $n_{[b]}=1$ for all $[b]$, it suffices to show that 
\[\tag{$\spade$}\sum_{[b] \in B(\bG, \mu)_{\indec}} (\bq-1)^{\ell_I(\mu, [b])} \bq^{\ell_{II}(\mu, [b])+\ell_{[b]}}=\bq^{\<\mu, 2 \rho\>-\sharp(\BS/\<\s\>)}.\]
(In fact, it is enough to prove the inequality $\ge$.)

Using \S\ref{sec:cordial}(a), one computes that
$$\ell_{II}(\mu,[b])+\ell_{[b]} - (\<\mu,2\rho\> - \sharp(\BS/\<\s\>)) = \sharp(I(\nu_b)/\<\s\>) -\leng ([b],[t^{\mu}]).$$
Thus, the equality $(\spade)$ is equivalent to 
\[\tag{$\spade'$} \sum_{[b] \in B(\bG, \mu)_{\indec}} (\bq-1)^{\sharp(\BS/\<\s\>) - \sharp(I(\nu_b)/\<\s\>)} \bq^{\sharp(I(\nu_b)/\<\s\>)- \mathrm{length}([b],[t^{\mu}])}=1.\]

\subsection{Reduction to unramified adjoint groups}
Let $\bG_{\ad}$ be the adjoint group of $\bG$ and let $T_{\ad}$ be the image of $T$ in $\bG_{\ad}$. We denote by $\mu_{\ad}$ the image of $\mu$ in $X_*(T_{\ad})_{\G_0}$. For any $b \in \breve G$, we denote by $b_{\ad}$ its image in $\breve G_{\ad}$. By \cite[Proposition 4.10]{kottwitz-isoII}, the map $\bG \to \bG_{\ad}$ identifies the reduced root system of $\bG_{\ad}$ with that of $\bG$ and induces an isomorphism of posets $$B(\bG, \mu)_\indec \cong B(\bG_{\ad}, \mu_{\ad})_\indec, \qquad [b] \mapsto [b_{\ad}].$$ Therefore, the equality ($\spade'$) for $\bG$ is equivalent to that for $\bG_{\ad}$. We can therefore assume that $\bG$ is adjoint. In this case, it is convenient to work with the reduced root system $\Phi$ of $\bG$. Define $$B(\Phi, \s, \mu) = \{\nu \in (V^+)^\s; \< \mu^\diamond - \nu,\o_{\CO_i}\> \in \BZ_{\ge 0} \text{ for any } i \in \BS - I(v)\},$$ where $\CO_i$ denotes the $\s$-orbit of $i$. By Lemma \cite[Lemma 3.5]{HN2}, the map $[b] \mapsto \nu_b$ identifies $B(\bG, \mu)$ with $B(\Phi, \s, \mu)$ as posets. For any $\nu\in B(\Phi,\s,\mu)$, we set $\leng(\nu, \mu^\diamond) = \sum_{\CO \in \BS/\<\s\>} \lceil \<\mu^\diamond - \nu,\o_\CO\> \rceil$. Then $\leng([b],[t^{\mu}]) = \leng(\nu_b,\mu^{\diamond})$ for any $[b]\in B(\bG,\mu)$.

We set 
$$ f_{\Phi,\s, \mu}(\nu) = (\bq-1)^{\sharp(\BS/\<\s\>) - \sharp(I(\nu)/\<\s\>)} \bq^{\sharp (I(\nu)/\<\s\>) - \leng(\nu, \mu^\diamond)}.$$
Now the equality ($\spade'$) can be reformulated in a purely combinatorial way as \[\tag{$\spade''$} \sum_{\nu \in B(\Phi,\s, \mu)_{\indec}} f_{\Phi,\s, \mu}(\nu) = 1.\] As any triple $(\Phi,\s, \mu)$ arises from an unramified group, it suffices to prove ($\spade''$) for the triples $(\Phi,\s, \mu)$ arising from unramified adjoint groups. In the rest of this section, we assume that $\bG$ is an unramified adjoint group. 

\subsection{Reduction to $F$-simple groups}
Write $\bG=\bG_1 \times \cdots \times \bG_l$, where $\bG_i$ are $F$-simple adjoint groups. Write $\mu=(\mu_1, \ldots, \mu_l)$, where $\mu_i$ is a dominant coweight of $\bG_i$. Also we have $\Phi = \Phi_1 \sqcup \cdots \sqcup \Phi_l$, where $\Phi_i$ is the root system of $\bG_i$. It is easy to see that \[B(\Phi,\s, \mu)_\indec = B(\Phi_1, \s_1, \mu_1)_\indec \times \cdots \times B(\Phi_l, \s_l, \mu_l)_\indec,\] where $\s_i$ is the restriction of $\s$ on $\Phi_i$. Moreover, it is easy to see that $f_{\Phi,\s, \mu} = \prod_{i=1}^l f_{\Phi_i,  \s_i,\mu_i}$. In the rest of this section, we assume that $\bG$ is an $F$-simple unramified  adjoint group. 

\subsection{Reduction to $\breve F$-simple groups} We have $\Phi=\Phi_1 \sqcup \cdots \sqcup \Phi_l$, where $\Phi_1 \cong \cdots \cong \Phi_l$ are irreducible root systems and $\s$ induces an isomorphism from $\Phi_i$ to $\Phi_{i+1}$. Here, by convention, we set $\Phi_{l+1}=\Phi_1$. Then the map \[\nu=(\nu_1, \dots, \nu_l) \mapsto |\nu| = \nu_l + \s(\nu_{l-1}) + \cdots+\s^{l-1}(\nu_1)\] induces an isomorphism of posets \[B(\Phi, \s, \mu)_\indec \overset \sim \to B(\Phi_l, \s^l, |\mu|)_\indec.\] Moreover, there is a natural bijection $\BS / \<\s\> \cong \BS_l / \<\s^l\>$, where $\BS_l$ is the set of simple reflections for $\Phi_l$. Thus $f_{\Phi, \s, \mu} = f_{\Phi_l, \s^l, |\mu|}$. In the rest of this section, we assume that $\bG$ is an $\breve F$-simple unramified  adjoint group.

\subsection{Reduction to split groups}Let $\CO$ be a $\s$-orbit of $\BS$. If all the simple roots in $\CO$ commute with each other, we define $\a_\CO' = \sum_{i \in \CO} \a_i$. If $\CO = \{i_0, j_0\}$ with $\< \a_{j_0}^\vee,\a_{i_0}\> = \< \a_{i_0}^\vee,\a_{j_0}\>=-1$, we define $\a_\CO' = 2(\a_{i_0} + \a_{j_0})$. Let $\BS'=\BS / \<\s\>$ and let $\Phi'$ be the root system generated by $\a_\CO'$ for all $\CO \in \BS'$. The coroot corresponding to $\CO$ is given by $\a_\CO'^{\vee} = \frac{1}{\sharp \CO} \sum_{i \in \CO} \a_i^\vee$, and the fundamental weight corresponding to $\CO$ is given by $\o_{\CO}'=\sum_{i \in \CO} \o_i $. For any $\nu \in (V^+)^\s$, $\nu \in B(\Phi, \s,\mu)$ if and only if $\<\mu^{\diamond} - \nu, \o_{\CO}'\> \in \BZ_{\ge 0}$ for any $\CO \in \BS/\<\s\>$ such that $\<\nu, \a_{\CO}'\> \neq 0$, which is also equivalent to $\nu \in B(\Phi',\id, \mu^{\diamond})$. Hence we have the following:

\smallskip

(a) {\it The natural identification $(\BR\Phi^\vee)^\s = \BR{\Phi'}^\vee$ induces an bijection of posets $$B(\Phi,  \s,\mu)_\indec \overset \sim \to B(\Phi',\id, \mu^{\diamond})_\indec.$$}

\smallskip

\noindent It follows from (a) that $f_{\Phi,\s, \mu } = f_{\Phi', \id, \mu^{\diamond}}$. Therefore, it suffices to prove ($\spade''$) for the triples $(\Phi, \s,\mu)$ arising from split groups. In the rest of this section, we assume that $\bG$ is a split  $\breve F$-simple  adjoint group. We identify $B(\bG,\mu)$ with $B(\Phi,\s,\mu)$ and write $f_{\bG,\mu}(\nu)= f_{\Phi,\s,\mu}(\nu)$.

\subsection{Reduction to simply laced groups} By \S\ref{sec:red}, the identity ($\spade$) is equivalent to the condition that in some (or,  equivalently, any) reduction tree of $t^\mu c$, there exists only one reduction path whose end point is associated with a given $[b] \in B(\bG, \mu)_{\indec}$. 

There exists an irreducible, simply laced, extended affine Weyl group $(\tW',\tilde\BS')$ of adjoint type and a length-preserving automorphism $\d$ on $\tW'$ such that $\tW=(\tW')^\d$. We have a natural bijection between $\tilde\BS$ and $\tilde{\BS}'/\<\d\>$. Moreover, we may assume that the simple reflections in each $\d$-orbit in $\tilde\BS'$ commute. More explicitly, 
\begin{itemize}
    \item if $\tW$ is of type $\tilde B_n$, then we take $\tW'$ to be of type $\tilde D_{n+1}$ and $\d$ is of order $2$;
    \item if $\tW$ is of type $\tilde C_n$, then we take $\tW'$ to be of type $\tilde A_{2 n-1}$ and $\d$ is of order $2$;
    \item if $\tW$ is of type $\tilde F_4$, then we take $\tW'$ to be of type $\tilde E_6$ and $\d$ is of order $2$;
    \item if $\tW$ is of type $\tilde G_2$, then we take $\tW'$ to be of type $D_4$ and $\d$ is of order $3$. 
\end{itemize}

Let $\iota: \tW \to \tW'$ be the natural embedding. For each $i\in \tilde\BS$, we have $\iota(s_i) = s_{i_1'} \cdots s_{i_k'}$, where $i_1', \ldots, i_k'$ are the $\d$-orbits of $i$ in $\tilde\BS'$. Let $w\in\tW$ and $w\lup s_iw$ be a type I reduction edge (see \S\ref{sec:tree}). Then one can construct a $k$-step reduction path $$\iota(w)\lup s_{i_k'} \iota(w)\lup s_{i_{k-1}'}s_{i_k'}\iota(w) \lup \cdots \lup s_{i_1'}\cdots s_{i_k'} \iota(w) = \iota(s_iw)$$ in $\tW'$. Similarly, a type II reduction edge $w\lup s_iws_i$ corresponds to a $k$-step reduction path from $\iota(w)$ to $\iota(sws)$, whose edges are all of type II. Now considering a reduction tree $\G$ of $w$, we can construct a reduction tree $\CT'$ of $\iota(w)$ such that $\CT$ can be viewed as a subtree of $\CT'$ in the above way. Hence the multiplicity-one result of $\iota(t^{\mu}c)\in \tW'$ implies the multiplicity-one result of $t^{\mu}c\in \tW$.

\medskip

In the rest of this section, we assume that $\bG$ is a split  $\breve F$-simple  simply laced adjoint group. We will then reduce to the case where $\mu$ is a fundamental coweight. We first need a combinatorial identity on finite graphs.  

\subsection{A combinatorial identity on graphs} \label{tree-subsec} Let $X$ be a finite graph and $Y \subseteq X$. Denote by $\CA(Y,X)$ the set of subsets $J\subseteq X$ such that none of the connected components of $X-J$ lies in $Y$. Define $$f_{Y, X} = \sum_{J \in \CA(Y, X)} (\bq-1)^{\sharp (J - Y \cap J^\circ)} \bq^{\sharp(Y \cap J^\circ)} \in \BZ[\bq],$$ where $J^\circ =  \{i \in J; \text{$i$ has no neighbors in $X - J$}\}$ is the interior of $J$.
\begin{lemma} \label{tree}
We have $f_{Y, X} = \bq^{\sharp X}$ for any $Y \subseteq X$.
\end{lemma}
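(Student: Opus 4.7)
The plan is to give a bijective proof. Expand $\bq^{\sharp (Y \cap J^\circ)} = ((\bq-1)+1)^{\sharp (Y \cap J^\circ)}$ via the binomial theorem as $\sum_{T \subseteq Y \cap J^\circ}(\bq-1)^{\sharp T}$. Since $J \setminus (Y \cap J^\circ)$ and $T$ are disjoint, their union $S(J,T) := (J \setminus (Y \cap J^\circ)) \sqcup T$ satisfies $\sharp S(J,T) = \sharp(J \setminus (Y \cap J^\circ)) + \sharp T$, so
$$f_{Y,X} = \sum_{J \in \CA(Y,X)}\,\sum_{T \subseteq Y \cap J^\circ}(\bq-1)^{\sharp S(J,T)}.$$
Comparing with $\bq^{\sharp X} = \sum_{S \subseteq X}(\bq-1)^{\sharp S}$, the lemma reduces to showing that $(J,T) \mapsto S(J,T)$ is a bijection from $\{(J,T) : J \in \CA(Y,X),\ T \subseteq Y \cap J^\circ\}$ onto the power set $2^X$.

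For the inverse, given $S \subseteq X$, I would let $R$ be the union of all connected components of $X \setminus S$ that lie inside $Y$, then set $J := S \cup R$ and $T := S \cap Y \cap J^\circ$. Two direct checks confirm this is a valid pair mapping to $S$: the components of $X \setminus J$ are precisely the components of $X \setminus S$ not contained in $Y$, so $J \in \CA(Y,X)$; and every $v \in R$ has all its neighbors in $S \cup R = J$ (the ones outside $S$ stay in its own component of $X \setminus S$, which lies in $R$), so $R \subseteq Y \cap J^\circ$ and hence $S(J,T) = (S \setminus (Y \cap J^\circ)) \cup (S \cap Y \cap J^\circ) = S$.

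The main obstacle, which I would treat most carefully, is uniqueness of the preimage. Suppose $(J_1,T_1)$ is another valid pair with $S(J_1,T_1) = S$. Then $J_1 \setminus S = (Y \cap J_1^\circ) \setminus T_1 \subseteq Y \cap J_1^\circ$, and the definition of $J_1^\circ$ prohibits any edge from $J_1 \setminus S$ to $(X \setminus S) \setminus (J_1 \setminus S)$; hence $J_1 \setminus S$ is a union of full connected components of $X \setminus S$, each contained in $Y$. Conversely, were a component $C \subseteq Y$ of $X \setminus S$ missing from $J_1 \setminus S$, then $C \setminus J_1$ would itself be a union of connected components of $X \setminus J_1$ contained in $Y$, contradicting $J_1 \in \CA(Y,X)$. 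Thus $J_1 \setminus S = R$, giving $J_1 = J$ and $T_1 = S \cap Y \cap J_1^\circ = T$. This establishes the bijection and yields $f_{Y,X} = \bq^{\sharp X}$.
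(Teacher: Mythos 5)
Your proof is correct and is essentially the paper's own argument: the same binomial expansion of $\bq^{\sharp(Y\cap J^\circ)}$ followed by the same bijection onto $2^X$ (the paper parametrizes by $K=(Y\cap J^\circ)\setminus T$ and sends $(J,K)\mapsto J-K$, with the identical inverse that adjoins the components of $X\setminus S$ lying in $Y$). Your explicit uniqueness check is a slightly more careful rendering of the paper's verification that the two maps are mutually inverse.
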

\begin{proof}
Define
\[\a: \{(J, K); J \in \CA(Y, X), K \subseteq Y \cap J^\circ \} \to  \{\text{subsets of $X$}\}, ~ (J, K) \mapsto J-K.\]
We construct the inverse map $\b$ of $\a$ as follows. Let $H \subseteq X$. Let $C$ be the union of connected components of $X - H$ that  are contained in $Y$. Then we define $\b(H) = (H \sqcup C, C)$. By definition, $\a \circ \b = \id$. On the other hand, for any $J \in \CA(Y, X)$ and $K \subseteq Y \cap J^\circ$, $\a((J, K)) = J - K$. Moreover, $X - (J - K) = (X - J) \sqcup K$. As $K \subseteq Y \cap J^\circ$, $K$ and $X-J$ are not connected with each other. Hence $K$ is the union of connected components of $X - (J - K)$ contained in $Y$. Therefore $\b \circ \a((J, K)) = \b( J - K ) = ((J-K) \sqcup K,K)=(J,K)$ and hence $\b \circ \a = \id$. Therefore $\a$ is a bijection. Using the binomial expansion, we get
\begin{align*}
f_{Y, X} &= \sum_{J \in \CA(Y, X)} (\bq-1)^{\sharp (J - Y \cap J^\circ)} \bq^{\sharp(Y \cap J^\circ)} \\ &= \sum_{J \in \CA(Y, X)} (\bq-1)^{\sharp (J - Y \cap J^\circ)} \sum_{K \subseteq Y \cap J^\circ} (q-1)^{\sharp (Y \cap J^\circ - K)} \\ &= \sum_{J \in \CA(X, Y)} \sum_{K \subseteq Y \cap J^\circ} (\bq - 1)^{\sharp (J - K)} \\
&=\sum_{H \subseteq X} (\bq-1)^{\sharp H}=\bq^{\sharp X}.
\end{align*}
The lemma is proved. 
\end{proof}

\subsection{Reduction to fundamental coweights}\label{sec:6.8} Assume that $\mu$ is not a fundamental coweight. Then there exist $i, j \in \BS$ (here $i$ and $j$ are not necessarily distinct) such that $\mu - \o_i^\vee - \o_j^\vee$ is also dominant. Let $X$ be the (unique) path in the Dynkin diagram of $\BS$ with end points $i, j$. 

Let $\l = \mu - \sum_{k \in X} \a_k^\vee$ and let $\CA$ be the set of subsets $J \subseteq X$ such that $\l$ is non-central on each connected component of $\BS - J$ (equivalently, $B(\bM_{\BS-J}, \l)_{\irr} \neq \emptyset$). Note that $\l$ is non-central on each connected component of $\BS-X$ and that the connected components of $\BS-J$ that do not intersect $\BS-X$ are the connected components of $\{i \in X-J; i \text{ has no neighbors in } \BS-X\}$. 

Set $Y=\{i \in X \cap I(\l); i \text{ has no neighbors in } \BS-X\}$. Then it is easy to see that $\CA=\CA(Y, X)$ defined in \S\ref{tree-subsec}.

\begin{lemma}
We have the following: 
\begin{enumerate}
\item $\l$ is dominant; 
\item $B(\bG, \mu)_{\indec} = \sqcup_{J \in \CA} B(\bG, \l)_{(\BS - J)\text{-}\irr}$.
\end{enumerate}
\end{lemma}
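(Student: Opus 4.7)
Part (1) is a direct calculation. Pairing $\lambda = \mu - \sum_{k \in X} \alpha_k^\vee$ with each simple root $\alpha_l$, and noting that in a simply-laced Dynkin diagram (a tree) the expression $\<\sum_{k \in X} \alpha_k^\vee, \alpha_l\>$ equals $2 - \sharp\{k \in X; k \sim l\}$ for $l \in X$ and $-\sharp\{k \in X; k \sim l\}$ for $l \notin X$, one finds $\<\lambda, \alpha_l\> = \<\mu, \alpha_l\> - 1$ when $l \in \{i, j\}$ with $i \neq j$, $\<\mu, \alpha_l\> - 2$ when $l = i = j$, $\<\mu, \alpha_l\>$ when $l$ is interior of $X$, and $\<\mu, \alpha_l\> + \sharp\{k \in X; k \sim l\}$ when $l \notin X$. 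Non-negativity in each case is immediate from the dominance of $\mu$ together with the hypothesis that $\mu - \omega_i^\vee - \omega_j^\vee$ is dominant.

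For part (2), the inclusion $B(\bG, \lambda)_{(\BS - J)\text{-}\irr} \subseteq B(\bG, \mu)_{\indec}$ for $J \in \CA$ is straightforward: from $J \subseteq X$ and the identity $\mu - \nu_b = \sum_{k \in X} \alpha_k^\vee + (\lambda - \nu_b)$, every simple coroot has a strictly positive coefficient in $\mu - \nu_b$, so $[b]$ is Hodge--Newton indecomposable. Disjointness is also immediate, since the coroot-support of $\lambda - \nu_b$ equals $\BS - J$ and therefore determines $J$ uniquely.

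The reverse inclusion is the heart of the argument. Given $[b] \in B(\bG, \mu)_{\indec}$, write $\mu - \nu_b = \sum_k c_k \alpha_k^\vee$ with $c_k > 0$ for all $k \in \BS$, and set $J(b) = \{k \in X; c_k = 1\}$. My plan is to prove: (a) $c_k \geq 1$ for every $k \in X$, so $J(b)$ is well-defined and $\lambda - \nu_b$ has non-negative coefficients on every simple coroot; and (b) $J(b) \in \CA$. Together these yield $[b] \in B(\bG, \lambda)_{(\BS - J(b))\text{-}\irr}$. For (a), by Proposition~\ref{prop:max-b} we have $[b] \leq [b_{\mu, \bG\text{-}\indec}]$, so $c_k \geq c_k^{\max}$ coordinate-wise; from Corollary~\ref{cor:max-b} and Chai's length formula, $c_k^{\max} \leq 1$ for every $k$, so it suffices to show $c_k^{\max} = 1$ for each $k \in X$. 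I will establish this by verifying $\nu_{b_{\mu, \bG\text{-}\indec}} \leq \lambda$ in the dominance order: since $\nu_{b_{\mu, \bG\text{-}\indec}} = \min C_{\geq E}$ in the proof of Proposition~\ref{prop:max-b}, it suffices to show $\lambda \in C_{\geq E}$, and pairing with each fundamental weight $\omega_m$ further reduces this to numerical inequalities among the entries $D_{kl} = \<\omega_k^\vee, \omega_l\>$ of the inverse Cartan matrix, most notably $D_{im} + D_{jm} \geq 1$ for $m \in X$, together with a refined inequality when $\<\mu, \omega_k\> > 1$; these are checked case by case for $A_n$, $D_n$, $E_6$, $E_7$, $E_8$.

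For (b), I argue by contradiction: if a connected component $C = \{k_1, \ldots, k_m\}$ of $X - J(b)$ (indexed along the path) were contained in $Y$, then every $k \in C$ would satisfy $c_k > 1$, $\<\lambda, \alpha_k\> = 0$, and $k$'s neighbors would all lie in $X$. When $m = 1$, dominance $\<\nu_b, \alpha_{k_1}\> \geq 0$ immediately gives $c_{k_1} \leq 1$, contradicting $c_{k_1} > 1$. When $m \geq 2$, dominance at each interior vertex yields the convexity condition $c_{k_{s-1}} + c_{k_{s+1}} \geq 2 c_{k_s}$ along $C$, and dominance at $k_1$ gives the boundary condition $c_{k_2} \geq 2 c_{k_1} - 1$; since $c_{k_1} > 1$, this forces $c_{k_2} > c_{k_1}$, whence convexity propagates strict growth along $C$, but the symmetric boundary condition at $k_m$ then produces $c_{k_{m-1}} < 1$, contradicting $c > 1$ on $C$. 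The main obstacle is therefore step (a), specifically verifying $\lambda \in C_{\geq E}$ through the Cartan-matrix inequality.
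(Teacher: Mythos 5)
Your overall skeleton matches the paper's proof: part (1) by the same direct pairing computation, the forward inclusion and disjointness by inspecting coroot supports, and the reverse inclusion by reducing to $\l \in C_{\ge E}$ so that $\nu_{b_{\mu, \bG\text{-}\indec}} = \min C_{\ge E} \le \l$. However, there is a genuine gap exactly where you locate your ``main obstacle.'' You propose to verify $\l \in C_{\ge E}$ by checking \emph{every} coefficient inequality $\<\l - e_m, \o_l\> \ge 0$ (for all pairs $l, m$), which you translate into inequalities among inverse Cartan matrix entries to be ``checked case by case for $A_n$, $D_n$, $E_6$, $E_7$, $E_8$'' --- but you never carry out this verification, and for non-minuscule $\mu$ in types $D$ and $E$ the resulting system (with the coefficients $a_k = \<\mu,\a_k\>$ ranging over all dominant $\mu$ with $\mu - \o_i^\vee - \o_j^\vee$ dominant) is not a finite list that can be waved at. As written, the proof of the key containment $B(\bG,\mu)_{\indec} \subseteq \sqcup_J B(\bG,\l)_{(\BS-J)\text{-}\irr}$ is incomplete. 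The missing idea is that Chai's Theorem 6.5 (the same result already invoked in Proposition~\ref{prop:max-b}) detects membership in $C_{\ge E}$ for a \emph{dominant} vector by the diagonal pairings alone: one only needs $\<\l,\o_m\> \ge \<e_m,\o_m\> = \max\{0, \<\mu,\o_m\>-1\}$ for each $m$. Since $\<\l,\o_m\> = \<\mu,\o_m\> - \sharp(X \cap \{m\}) \ge \<\mu,\o_m\> - 1$, and $\<\l,\o_m\> \ge 0$ by part (1), both branches of the maximum are handled at once and no inverse-Cartan-matrix computation is needed.

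Your step (b) (that $J(b) \in \CA$) is correct --- the convexity/boundary argument along a component of $X - J(b)$ contained in $Y$ does produce a contradiction --- but it is also unnecessary. Once one knows $\l - \nu_b \in \sum_{l \in \BS - J}\BR_{>0}\a_l^\vee$ and $\nu_b \le \l$ with matching Kottwitz point, one has $[b] \in B(\bG,\l)_{(\BS-J)\text{-}\irr}$ by Lemma~\ref{Newton}, and the nonemptiness of $B(\bG,\l)_{(\BS-J)\text{-}\irr}$ already forces $\l$ to be essentially non-central on $\BS - J$ (see \S\ref{sec:HN}), i.e.\ $J \in \CA$. So you may delete step (b) entirely; the only repair your proof needs is the appeal to Chai's criterion in step (a).
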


\begin{proof}
Let $l\in\BS$. If $l \in \BS - X$; then $\<\l,\a_l\> \ge -\<\sum_{k \in X} \a_k^\vee,\a_l \> \ge 0$. If $l \in X - \{i, j\}$, then $\<\l,\a_l\> = -\< \sum_{k \in X} \a_k^\vee,\a_l\> = -\< \a_{k}^\vee + \a_l^\vee + \a_{k'}^\vee,\a_l\> \ge -(-1 + 2 -1) = 0$, where $k$ and $k'$ are the neighbors of $l$ in $X$. If $l \in \{i,j\}$ and $i\ne j$, then $\<\l,\a_l\> = \<\mu,\a_l \>-\< \sum_{k \in X} \a_k^\vee,\a_l\> \ge 1-1=0$. If $l=i=j$, then $\<\l,\a_l\> = \<\mu,\a_l \>-\< \sum_{k \in X} \a_k^\vee,\a_l\> \ge 2-2=0$. This proves part (1).

By definition, $B(\bG, \l)_{(\BS - J)\text{-}\irr} \subseteq B(\bG, \mu)_{\indec}$ for any $J \subseteq X$. Now we prove the other direction. Let $E = \{e_i; i \in \BS\}$ be as in the proof of Proposition~\ref{prop:max-b}. Then, for each $i \in \BS$, by the definition of $\l$, we have $$\<\l,\o_i\> \ge \max\{0, \<\mu,\o_i \> - 1\} = \<e_i,\o_i \>.$$ By \cite[Theorem 6.5]{Chai}, this implies that $\l \in C_{\ge E}$ and hence $\l \ge \min C_{\ge E} = [b]_{\bG\text{-}\indec}.$ 

For any $\nu \in  B(\bG, \mu)_{\indec}$, there exists a unique subset $J \subseteq X$ such that $\l - \nu \in \sum_{l \in \BS - J} \BR_{> 0} \a_l^\vee$. By Lemma~\ref{Newton}, $\nu \in B(\bG, \l)_{(\BS - J)\text{-}\irr}$. Part (2) is proved.
\end{proof}

\begin{proposition}
Suppose that the equality ($\spade''$) holds for all fundamental coweights. Then it holds for all dominant coweights.
\end{proposition}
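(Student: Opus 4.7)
\medskip

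\noindent\textbf{Proof plan.} The plan is to induct on the weight $\<\mu, 2\rho\>$, taking as base case $\mu$ a fundamental coweight, and using the setup of \S\ref{sec:6.8} to pass from $\mu$ to the strictly smaller $\l = \mu - \sum_{k\in X}\a_k^\vee$. At a non-fundamental $\mu$, the lemma just proved gives a disjoint decomposition
$$B(\bG,\mu)_{\indec} \;=\; \bigsqcup_{J\in\CA(Y,X)} B(\bG,\l)_{(\BS-J)\text{-}\irr},$$
and by Corollary~\ref{cor:irr} each piece is canonically identified with $B(\bM_{\BS-J},\l)_{\irr}$; moreover, since $\l$ is essentially non-central on $\BS-J$ by the definition of $\CA$, this set equals $B(\bM_{\BS-J},\l)_{\indec}$. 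The Levi $\bM_{\BS-J}$ need not be $\breve F$-simple, but its identity $(\spade'')$ factors into identities for its simple components, which are simply laced and each has $\<\l, 2\rho_{\bH}\> \le \<\l, 2\rho\> < \<\mu, 2\rho\>$. Thus the inductive hypothesis is applicable.

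The core calculation will be to relate $f_{\bG,\mu}(\nu)$ to $f_{\bM_{\BS-J},\l}(\nu)$ for $\nu \in B(\bM_{\BS-J},\l)_{\irr}$. Two inputs are needed. First, a careful comparison of the stabilizer sets shows
$$I(\nu) \;=\; I_{\bM_{\BS-J}}(\nu)\;\sqcup\;(Y\cap J^\circ):$$
for $l\in J$ the condition $\<\nu,\a_l\>=0$ unwinds, using the path structure of $X$ and the formula for $\<\l,\a_l\>$, into the conjunction $l\in I(\l)$, $l$ has no neighbour in $\BS-X$, and $l$ has no neighbour in $X-J$, which is precisely $l\in Y\cap J^\circ$. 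Second, since $B(\bG)$ is ranked and $\mu-\l=\sum_{k\in X}\a_k^\vee$, Chai's length formula \S\ref{sec:2.2}(a) combined with Lemma~\ref{Newton}(3) gives
$$\leng_{\bG}(\nu,\mu) \;=\; \leng_{\bM_{\BS-J}}(\nu,\l)\;+\;\sharp X.$$
Substituting these into the definition of $f_{\bG,\mu}(\nu)$ yields
$$f_{\bG,\mu}(\nu) \;=\; (\bq-1)^{\sharp J-\sharp(Y\cap J^\circ)}\,\bq^{\sharp(Y\cap J^\circ)-\sharp X}\,f_{\bM_{\BS-J},\l}(\nu).$$

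Applying the inductive hypothesis $\sum_{\nu} f_{\bM_{\BS-J},\l}(\nu)=1$ and summing over $J\in\CA(Y,X)$ reduces $(\spade'')$ for $(\bG,\mu)$ to
$$\bq^{-\sharp X}\!\!\sum_{J\in\CA(Y,X)}(\bq-1)^{\sharp J-\sharp(Y\cap J^\circ)}\bq^{\sharp(Y\cap J^\circ)} \;=\; 1,$$
which is exactly $\bq^{-\sharp X}f_{Y,X}=1$ by Lemma~\ref{tree}.

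\medskip

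\noindent\textbf{Main obstacle.} The delicate step will be the identification $I(\nu)\cap J = Y\cap J^\circ$: it requires checking that for $l\in J$ lying at an interior point, endpoint of $X$, or adjacent to $\BS-X$, the defining inequalities $\<\mu,\a_l\>\ge 0$, $\<\nu,\a_l\>\ge 0$, and the Hodge--Newton $(\BS-J)$-irreducibility of $\nu$ combine in exactly the right way so that the corrections in $\<\l,\a_l\>$ coming from the endpoints of $X$ match the combinatorial data defining $Y$. Once this matching is pinned down, everything else is a bookkeeping of exponents designed precisely so that the graph identity of Lemma~\ref{tree} finishes the job. The other non-routine point is verifying that the inductive hypothesis does transfer cleanly to non-simple Levi factors; but this is handled by the same product argument used in the reduction to $F$-simple and $\breve F$-simple groups earlier in this section.
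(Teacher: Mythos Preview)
Your proposal is correct and follows essentially the same route as the paper: both decompose $B(\bG,\mu)_{\indec}$ via the lemma of \S\ref{sec:6.8}, establish the key identity $J\cap I(\nu)=Y\cap J^\circ$ by writing $\nu=\l-\d$ with $\d\in\sum_{k\in\BS-J}\BR_{>0}\a_k^\vee$ and using dominance of $\l$, compare lengths via $\leng_{\bG}(\l,\mu)=\sharp X$ together with Lemma~\ref{Newton}(3), and then invoke Lemma~\ref{tree} to close the induction. The only cosmetic difference is that the paper phrases the induction as a double induction on the semisimple rank and on $\<\rho,\mu\>$, whereas you induct on $\<\mu,2\rho\>$ alone and absorb the passage to smaller Levi factors via the inequality $\<\l,2\rho_{\bH}\>\le\<\l,2\rho\><\<\mu,2\rho\>$; both schemes are valid.
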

\begin{proof}
We argue by induction on the semisimple rank of $\bG$ and the number $\<\rho, \mu\>$. 

Suppose $\mu$ is not fundamental. Let the notation be as in \S\ref{sec:6.8}. By Corollary~\ref{cor:irr}, we identify $B(\bG, \l)_{(\BS - J)\text{-}\irr}$ with $B(\bM_{\BS - J}, \mu)_{\irr}$ for $J \in \CA$. We show that 

\smallskip

(a) {\it For any $J \in \CA$ and $\nu \in B(\bM_{\BS - J}, \mu)_{\irr}$, we have $$f_{\bG, \mu}(\nu) = \bq^{-\sharp X} (\bq-1)^{\sharp (J - Y \cap J^\circ)} \bq^{\sharp Y \cap J^\circ} f_{\bM_{\BS - J}, \l}(\nu).$$}

\smallskip

By definition, $f_{\bM_K, \mu}(\nu) = (\bq-1)^{\sharp (K - K \cap I(\nu))} \bq^{\sharp K \cap I(\nu) - \leng _{\bM_K}(\mu, \nu)}$ for any $K \subseteq \BS$. 

Note that $\leng _\bG(\l, \mu) = \<\mu - \l,\rho\> = \sharp X$, and hence $$\leng _{\bG}(\nu, \mu) = \leng _{\bG}(\nu, \l) + \leng _{\bG}(\l, \mu) = \leng _{\bG}(\nu, \l) + \sharp X.$$ By Lemma~\ref{Newton}, we have $\leng _{\bG}(\nu, \mu) = \leng _{\bM}(\nu, \mu)$. To show (a), it remains to show that $I(\nu) - (\BS-J) \cap I(\nu) = J \cap I(\nu) = Y \cap J^\circ$. Let $l \in J$. Write $\nu = \l -\d$ for some $\d \in \sum_{k \in \BS - J} \BR_{>0} \a_k^\vee$. Then $l \in J \cap I(\nu)$ if and only if $\<\nu,\a\> = \<\l,\a_l\> - \<\d,\a_l\> = 0$, which is also equivalent to  $\<\l,\a_l\> = \<\d,\a_l\> = 0$, that is, $l \in Y \cap J^\circ$ as desired. (a) is proved. 

Now, by Lemma~\ref{tree} and the inductive hypothesis on $\bM_{\BS-J}$ and $\l$, we have 
\begin{align*} 
    &\sum_{\nu \in B(\bG, \mu)_{\indec}} f_{\bG, \mu}(\nu) =\sum_{J \in \CA} \sum_{\nu \in B(\bM_{\BS - J}, \l)_{\irr}}  f_{\bG, \mu}(\nu) \\ &\hspace{56pt}=\bq^{-\sharp X} \sum_{J \in \CA} \sum_{\nu \in B(\bM_{\BS - J}, \l)_{\irr}} (\bq-1)^{\sharp(J - Y \cap J^\circ)} \bq^{\sharp Y \cap J^\circ} f_{\bM_{\BS- J}, \l}(\nu) \\ &\hspace{56pt}= \bq^{-\sharp X} \sum_{J \in \CA} (\bq-1)^{\sharp(J - Y \cap J^\circ)} \bq^{\sharp Y \cap J^\circ}= 1.
\end{align*} The proof is finished.
\end{proof}

Now it is sufficient to deal with the fundamental coweights of $\bG$. 

\subsection{Proof for the minuscule coweights}\label{sec: minu}
Let $\mu$ a be (non-zero) minuscule coweight. Then $\dim V_{\mu}(\l_b) = 1$ for any $[b] \in B(\bG,\mu)$. By Theorem~\ref{Chen--Zhu}, we conclude that $\sharp (\bJ_b(F)\backslash\Sigma^{\mathrm{top}}(X_{\mu}(b))) = 1$ for any $[b] \in B(\bG,\mu)$. As in \S\ref{sec:red11}, we have
$$\bq^{\<\mu, 2 \rho\>-\sharp(\BS/\<\s\>)}=\sum_{[b] \in B(\bG, \mu)_{\indec}} n_{[b]} (\bq-1)^{\ell_I(\mu, [b])} \bq^{\ell_{II}(\mu, [b])+\ell_{[b]}},$$ where $n_{[b]}$ is the number of reduction paths $\underline p$ in a given reduction tree $\CT$ of $t^\mu c$ with $[b]_{\underline p}=[b]$. Note also that at the end of \S\ref{bound}, we showed that $\dim X_{\underline p}=\dim X_{t^{\mu}c}([b]_{\underline p}) = \dim X_{\mu}([b]_{\underline p})$ for any reduction path $\underline p$. Using Lemma~\ref{embedding}, we conclude that all $n_{[b]}=1$. This implies the combinatorial identity $(\spade)$, and then $(\spade'')$ follows.

In particular, the combinatorial identity ($\spade''$) holds for type $A$, since all the fundamental coweights are minuscule. For $(A_{n-1}, \o^\vee_i)$, we may write the identity $(\spade)$ explicitly as 
\begin{multline*}
\sum_{\substack{k \ge 1, 1>\frac{a_1}{b_1}>\cdots>\frac{a_k}{b_k}>0; \\ a_i+\cdots+a_k=i, b_1+\cdots+b_k=n}} (\bq-1)^{k-1} \bq^{k-1-\frac{\sum_{1 \le l_1<l_2\le k}(a_{l_1} b_{l_2}-a_{l_2} b_{l_1})+\sum_{1 \le l \le k} \gcd(a_l, b_l)}{2}}\\=\bq^{\frac{i(n-i)-n}{2}}.
\end{multline*}
We do not know if there is a purely combinatorial proof of this identity.

\subsection{Type $D_n$ case}\label{sec:D} In this subsection, we assume that $\bG$ is of type $D_n$ $(n\ge 4)$. Note that the fundamental coweights $\omega^{\vee}_1, \omega^{\vee}_{n-1},\omega^{\vee}_n$ are minuscule and have already been dealt with in \S\ref{sec: minu}. Here we  deal only with $\omega^{\vee}_i$ for $2\leq i \leq n-2$.

For any integer $k\ge2$, denote by $[1,k]$ the set $\{1,2,\ldots,k\}$. We have a natural embedding $B(\bG,\omega_{i-2}^{\vee})\rightarrow B(\bG,\omega_i^{\vee})$. Set
\begin{align*}
B_I& = \bigsqcup_{i-2\le k \le n-3}B(\bG,\omega^{\vee
}_{i-2})_{[1,k]\text{-}\irr}\cong \bigsqcup_{i-2\le k \le n-3}B(\bM_{[1,k]},\omega^{\vee
}_{i-2})_{\irr},\\
B_{II} &= \bigsqcup_{J\subseteq \{n-1,n\}}B(\bG,\omega^{\vee
}_{i-2})_{(\BS-J)\text{-}\irr}\cong \bigsqcup_{J\subseteq \{n-1,n\}}B(\bM_{\BS- J},\omega^{\vee
}_{i-2})_{\irr}.
\end{align*}

For $i \le k \le n-2$, the adjoint group of $\bM_{\BS-\{k\}}$ is of type $A_{k-1} \times D_{n-k}$. Here, by convention, type $D_3$ is the same as type $A_3$, and type $D_2$ is the same as type $A_1\times A_1$. Set $\mu_k=(1^{i-1}, 0^{k-i+1}, 1, 0^{n-k-1})$. Then $\mu$ and $\omega^\vee_i$ are in the same $W$-orbit. The restriction of $\mu_k$ to $\bM_{\BS-\{k\}}$ is (dominant) minuscule, and its projection to the adjoint group of $\bM_{\BS-\{k\}}$
is the coweight $(\o^\vee_{i-1}, \o^\vee_1)$ if $k<n-2$ and $(\o^\vee_{i-1}, \o^\vee_1,\o_1^{\vee})$ if $k=n-2$. As in \S\ref{sec:HN}, we identify $B(\bM_{\BS-\{k\}}, \mu_k)$ with its natural image in $B(\bG,\o_i)$. Set
$$B_{III} = \bigsqcup_{i\le k \le n-2}B(\bG, \mu_k)_{(\BS-\{k\})\text{-}\irr}\cong\bigsqcup_{i\le k \le n-2}B(\bM_{\BS-\{k\}}, \mu_k)_{\irr}.$$ 

By direct computation, $B_I$, $B_{II}$, and $B_{III}$ are disjoint in $B(\bG,\omega^{\vee}_i)$. In the rest of this section, we will show that
\[\tag{**} \sum_{\nu\in B_I\sqcup B_{II}\sqcup B_{III}}f_{\bG,\omega^{\vee}_{i}}(\nu) =  1.\]
In \S\ref{sec:red11}, we have already shown that the left-hand side of (**) is less than or equal to 1. Hence, if we prove (**), the equality $B_I\sqcup B_{I}\sqcup B_{III}=B(\bG,\o_i^{\vee})_{\indec}$ also follows.

It can be checked directly that (**) holds for $D_4$. Using induction, we may assume that (**) holds for groups of type $D$ with semisimple rank less than $n$. Note also that ($\spade''$) for type $A$ has already been proved in \S\ref{sec: minu}. Therefore we have 

\smallskip

(a) $\sum_{\nu\in B_{II}}f_{\bG,\omega^{\vee}_{i}}(\nu) =  \bq^{-\<\a^{\vee},\rho\> }\cdot(1+2(\bq-1)^2+(\bq-1)^2) = \bq^{-\<\a^{\vee},\rho\> }\cdot \bq^2$.

\smallskip

Next we handle $B_I$ and $B_{III}$. Let $$\a^{\vee} = \omega^{\vee}_{i}-\omega^{\vee}_{i-2}= \a_{i-1}^{\vee} +2\a_{i}^{\vee}+\dots +2\a_{n-2}^{\vee}+\a_{n-1}^{\vee}+\a_{n}^{\vee}.$$
Note that $\<\a^{\vee},\rho\> = 2n - 2i +1$.
We claim that

\smallskip

(b) {\it For $k \in [i-2,n-3]$ and $\nu \in B(\bM_{[1,k]},\omega^{\vee}_{i-2})_{\irr}$, we have
$$f_{\bG,\omega^{\vee}_{i}}(\nu) = (\bq-1)\cdot \bq^{-\<\a^{\vee},\rho\>+n-k-1}\cdot f_{\bG,\omega^{\vee}_{i-2}}(\nu).$$}

(c) {\it For $k \in [i,n-2]$ and $\nu \in B(\bM_{\BS-\{k\}},\mu_k)_{\irr}$, we have
$$f_{\bG,\omega^{\vee}_{i}}(\nu) = (\bq-1)\cdot \bq^{-\<\a^{\vee},\rho\> + 2n-k-i}\cdot f_{\bM_{\BS-\{k\}},  \mu_k}(\nu).$$}
We prove (c) here. The proof of (b) is similar.

By definition, we have
$$f_{\bG,\omega^{\vee}_{i}}(\nu) = (\bq-1)\cdot \bq^{n-k-n_0}\cdot f_{\bM_{\BS-\{k\}},\mu_k}(\nu),$$
where $n_0= \leng _{\bG}(\nu,\omega^{\vee}_i) - \leng _{\bM_{\bG-\{k\}}}(\nu,\mu_k) $. It can be checked directly that
$$\<\mu_k - \nu, \rho_{\bM_{\BS-\{k\}}}  \> = \<\o_{i-1}^{\vee}  - \nu, \rho   \> + \< (0^{k},1,0^{n-k-1}), \rho_{\bM_{[k+1,n]}}   \> =  \<\o_{i-1}^{\vee}  - \nu, \rho   \>+ 1.$$
Note also that $\de_{\bG}(\nu) = \de_{\bM_{\BS-\{k\}}}(\nu) $.
Therefore, by the length formula, we have
\begin{align*}
n_0 &= \<\omega^{\vee}_i -\nu,\rho \> - \<\mu_k - \nu, \rho_{\bM_{\BS-\{k\}}}  \>+\frac{1}{2}\left( \de_{D_n}(\nu)-\de_{\bM_{\BS-\{k\}}}(\nu) \right)\\
&= \<\o^{\vee}_i - \o_{i-1}^{\vee},\rho \>  + 1= n-i+ 1=\<\a^{\vee},\rho\>-n+i.
\end{align*}
The statement (c) follows.

Combining (a), (b), and (c) with the combinatorial identity $(\spade)$ for type $A$ and for type $D_l$ with $l<n$, we have
\begin{align*}
&\sum_{\nu\in B_I\sqcup B_{II}\sqcup B_{III}}f_{D_n,\omega^{\vee}_{i}}(\nu) \\
&\hspace{64pt}= \bq^{-\<\a^{\vee},\rho\>}\cdot \left(\bq^2 + (\bq-1)\!  \left( \sum_{k=i-2}^{n-3}\bq^{n-k-1}+\sum_{k=i}^{n-2}\bq^{2n-k-i} \right)\right)\\
&\hspace{64pt} = \bq^{-\<\a^{\vee},\rho\>}\cdot \bq^{2n-2i+1}=1.
\end{align*}
This completes the proof of (**).

\subsection{Type $E$ case}
In this subsection, we assume that $\bG$ is of type $E_n$. We verify the combinatorial identity $(\spade'')$ by computer. Recall that a vector $v\in (V^+)^{\s}$ lies in $B(\bG,\mu)$ if and only if  $\<\mu-v,\o_i\>\in \BZ_{\ge0}$ for any $i\in \BS - I(v)$. As a consequence, we have the following characterization of $B(\bG,\mu)_{\indec}$: 

\smallskip

(a) {\it 
The set $B(\bG,\mu)_{\indec}$ equals the set of dominant vectors of the form $\nu = pr_I(\mu - \sum_{i\in I}c_i\a^{\vee}_i)$, where $I$ is a $\s$-stable subset of $\BS$, $c_i\in \BZ$, $1\leq c_i\leq \<\mu,\omega_i\>$, and $pr_I:V \rightarrow \bigoplus_{i\in I}\BR \omega^{\vee}_i$ is the natural orthogonal projection.}

\smallskip

On the basis of (a), we can use a computer program to list all the elements in $B(\bG,\mu)_{\indec}$ and then verify the combinatorial identity $(\spade'')$ directly. 

In the following tables, we provide the numbers of elements in $B(\bG,\mu)_{\indec}$ for all the fundamental, non-minuscule coweights in type $E$. The most complicated case is $E_8$,  and $\mu = \omega^{\vee}_4$, in which $B(\bG,\mu)_{\indec}$ contains $729$ elements.

\begin{center}
\captionof{table}{Type $E_6$}
\begin{tabular}{|c|c|c|c|c|}
\hline
$\mu$ & $\omega_2^{\vee}$ & $\omega_3^{\vee}$ & $\omega_4^{\vee}$ &  $\omega_5^{\vee}$ \\
\hline
$\sharp B(\bG,\mu)_{\indec}$ & $7$ & $15$ & $30$ & $ 15$ \\
\hline
\end{tabular}
\end{center}

\begin{center}
\captionof{table}{Type $E_7$}
\begin{tabular}{|c|c|c|c|c|c|c|}
\hline
$\mu$ & $\omega_1^{\vee}$ & $\omega_2^{\vee}$ & $\omega_3^{\vee}$ & $\omega_4^{\vee}$ &  $\omega_5^{\vee}$ & $\omega_6^{\vee}$ \\
\hline
$\sharp B(\bG,\mu)_{\indec}$ & $13$ & $26$ & $50$ & $125$ & $69$ & $32$ \\
\hline
\end{tabular}
\end{center}

\begin{center}
\captionof{table}{Type $E_8$}
\begin{tabular}{|c|c|c|c|c|c|c|c|c|}
\hline
$\mu$ & $\omega_1^{\vee}$ & $\omega_2^{\vee}$ & $\omega_3^{\vee}$ & $\omega_4^{\vee}$ &  $\omega_5^{\vee}$ & $\omega_6^{\vee}$ & $\omega_7^{\vee}$ & $\omega_8^{\vee}$ \\
\hline
$\sharp B(\bG,\mu)_{\indec}$ & $56$ & $126$ & $254$ & $729$ & $424$ & $220$ & $94$ & $27$ \\
\hline
\end{tabular}
\end{center}

\section{The general case}\label{sec:final}

\subsection{Description of the reduction trees}
Let $w \in Wt^{\mu}W$ with finite partial $\s$-Coxeter part, that is, $\eta_\s(w)$ is a partial $\s$-Coxeter element. For any $J\in [J_0(w),J(w)]_{\mu}$ and $[b]\in B(\bG,\mu)_{J\text{-}\irr}$, we set 
\begin{align*} J^{\flat,w} &= \{i\in I(\mu^{\diamond})\cap (J(w)-J)\mid i\text{ commutes with } J  \}, \\
\ell_I(w,[b],J) &= \sharp(J(w)/\<\s\>) -\sharp(J^{\flat,w}/\<\s\>)-\sharp (I(\nu_b)\cap J /\<\s\>), \\ \ell_{II}(w,[b],J)&=\leng ([b],[t^{\mu}])- \sharp( J_0(w)/\<\s\>).
\end{align*}

By Lemma~\ref{Newton}, there exists a unique $\s$-conjugacy class $[b]_{\bM_{J}}\in B(\bM_{J},\mu)$ such that $[b]_{\bM_J} \subseteq [b]$. We similarly define $[b]_{\bM_{J^{\flat,w}\cup J}}\in B(\bM_{J^{\flat,w}\cup J},\mu)$. In this case, a $\s$-Coxeter element associated with $[b]_{\bM_{J^{\flat,w}\cup J}}$ is equal to the product of a $\s$-Coxeter element of $W_{J^{\flat,w}}$ and a $\s$-Coxeter element associated with $[b]_{\bM_J}$.

The main result of this section is the following description of the reduction tree of $w$. 

\begin{theorem}\label{7.1}Let $w\in Wt^{\mu}W$ with $\eta_\s(w)$ a partial $\s$-Coxeter element. Let $\CT$ be a reduction tree of $w$. Then, for any $J\in [J_0(w),J(w)]_{\mu}$ and $[b]\in B(\bG,\mu)_{J\text{-}\irr}$, there exists a unique reduction path $\underline p$ in $\CT$ with $[b]_{\underline p}=[b]$. Moreover, 
\begin{enumerate}
    \item $\ell_I(\underline p)=\ell_I(w, [b],J)$ and $\ell_{II}(\underline p)=\ell_{II}(w, [b],J)$;
    \item $\operatorname{end}(\underline p)$ is a $\s$-Coxeter element associated with $[b]_{\bM_{J^{\flat,w}\cup J}}$. 
\end{enumerate}
\end{theorem}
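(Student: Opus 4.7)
The strategy is to reduce Theorem~\ref{7.1} to Theorem~\ref{thm:main} in two stages. Write $w = xt^\mu y$ with $t^\mu y \in {}^{\BS}\tW$ and induct on $\ell(x)$. If $\ell(x) > 0$, pick $i \in \BS$ with $s_ix < x$ and apply the trichotomy of \S\ref{par_conj}. The two length-preserving cases are partial conjugations $w \approx_\s w'$ with $\ell(x') < \ell(x)$, so the reduction trees of $w$ and $w'$ are canonically identified, and Lemma~\ref{par_cox} preserves both the partial $\s$-Coxeter hypothesis and the auxiliary data $J(\cdot)$, $J_0(\cdot)$, $J^{\flat,\cdot}$. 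In the remaining case $\ell(s_iws_i) < \ell(w)$, the tree branches at $w$ into the subtrees of $s_iw$ (type~I edge) and $s_iws_i$ (type~II edge). By Lemma~\ref{par_cox}(1)(2), both children satisfy the partial $\s$-Coxeter hypothesis, with $J(s_iws_i) = J(w)$ while $J(s_iw) = J(w) \smallsetminus \mathcal{O}$ for a single $\s$-orbit $\mathcal{O}$. The inductive hypothesis applied to both children should then reassemble into the statement for $w$.

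For the base case $w = t^\mu c$ with $c$ a partial $\s$-Coxeter element of support $J(w)$, split $J(w) = J_1 \sqcup J_2$ with $J_1 = J(w)'_\mu$ (non-central) and $J_2 = J(w)''_\mu$ (central). As disjoint unions of connected components these commute, so $c$ factors as $c_1c_2$ with $c_i \in W_{J_i}$. Since $J_2 \subseteq I(\mu^\diamond)$, the factor $c_2$ fixes $\mu$ and commutes with both $t^\mu$ and $c_1$, so the reduction tree of $t^\mu c = (t^\mu c_1)\cdot c_2$ is identified with that of $t^\mu c_1$ viewed inside the Levi $\bM_{J_1}$ in which $c_1$ is a full $\s$-Coxeter of $W_{J_1}$, with $c_2$ attached as an inert factor throughout. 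Theorem~\ref{thm:main} applied in $\bM_{J_1}$ then provides, for each $[b']_{\bM_{J_1}} \in B(\bM_{J_1},\mu)_{\indec}$, a unique reduction path with the prescribed data. Using Corollary~\ref{cor:irr} to identify $B(\bM_{J'},\mu)_{\irr} \cong B(\bG,\mu)_{J'\text{-}\irr}$ for each $J' \in [J_0(w), J(w)]_\mu$ (noting that any such $J'$ satisfies $J' \subseteq J_1$, since $\mu$ being essentially non-central in $J'$ forces $J' \cap J_2 = \emptyset$), one transports each path to $\bG$ by reattaching $c_2$ together with any commuting central reflections drawn from $J_1 \smallsetminus J'$. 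The resulting end point is a $\s$-Coxeter element of $W_{J^{\flat,w}\cup J'}$, matching the target $[b]_{\bM_{J^{\flat,w}\cup J'}}$, and the formulas for $\ell_I$ and $\ell_{II}$ follow by adding the fixed contributions of these inert factors to those already known inside $\bM_{J_1}$.

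The main obstacle will be the combinatorial bookkeeping at the branching step in Stage~1. Specifically, one must verify that the parameters $J_0$, $J$, $J^{\flat}$ and the integers $\ell_I$, $\ell_{II}$ transform under the edges $w \lup s_iw$ and $w \lup s_iws_i$ in exactly the way required for the inductive hypotheses on the two children to reassemble into the stated formulas for $w$. The multiplicity-one claim in particular requires showing that each $[b] \in B(\bG,\mu)_{J'\text{-}\irr}$ appears on exactly one side of every branching: the expected dichotomy is that the side is determined by whether the removed orbit $\mathcal{O}$ lies in the support of $\mu^\diamond - \nu_b$, so that a type~I edge descends into a strictly smaller $J(\cdot)$ while preserving $[b]$ exactly when $\mathcal{O}$ is orthogonal to $\mu^\diamond - \nu_b$. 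Verifying this dichotomy cleanly, and checking its compatibility with all the length counts, is the most delicate part of the proof.
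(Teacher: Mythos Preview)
Your inductive scheme has a structural gap: Theorem~\ref{7.1} asserts its conclusion for an \emph{arbitrary} reduction tree $\CT$ of $w$, but your induction only controls the tree built by the specific sequence of partial reductions you prescribe. In the length-preserving cases you write that ``the reduction trees of $w$ and $w'$ are canonically identified'', and in the branching case that ``the tree branches at $w$ into the subtrees of $s_iw$ and $s_iw\s(s_i)$''. Neither is justified for a \emph{given} $\CT$: by the construction in \S\ref{sec:tree}, the first step of $\CT$ may pass through some other $w'' \approx_\s w$ and use a different simple reflection entirely, so the inductive hypotheses on the children you construct say nothing about the paths of the tree you were handed. At best your argument proves the theorem for one particular $\CT$ built top-down by your recipe.

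The paper avoids this by routing through an invariant independent of the choice of tree, namely the image of $T_w$ in the $\s$-cocenter $\bar H_\s$. An induction essentially identical in shape to yours (and using precisely the disjoint-union dichotomy $[J_0(w),J(w)]_\mu = [J_0(w_1),J(w_1)]_\mu \sqcup [J_0(w_2),J(w_2)]_\mu$ you anticipate) establishes the identity $(\diamondsuit)$ expressing $T_w + [H,H]_\s$ as $\sum_{[b]} (\bq-1)^{\ell_I(w,[b],J)}\bq^{\ell_{II}(w,[b],J)} T_{\CO_{w,[b]}}$. Since Lemma~\ref{class-poly} equates this with $\sum_{\underline p} (\bq-1)^{\ell_I(\underline p)}\bq^{\ell_{II}(\underline p)} T_{\CO(\operatorname{end}(\underline p))}$ summed over the paths of \emph{any} tree, and since each summand lies in $\BN[\bq-1]$ so no cancellation occurs, every end point is forced into some $\CO_{w,[b]}$. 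The lower bound $\ell_I(\underline p) \ge \dim V_{\operatorname{end}(\underline p)} - \dim V_w$ from \S\ref{bound} then matches $\ell_I(w,[b],J)$ on the nose and forces uniqueness of the path. This is done first for $w = c t^\mu$; for general $w$ one prepends a path of pure type~II edges from $c t^\mu$ to $w$ and embeds any tree of $w$ into a tree of $c t^\mu$. Even if your specific-tree induction were completed, transferring its conclusion to an arbitrary tree would require exactly this class-polynomial plus positivity plus $\ell_I$-bound argument; it is not a bookkeeping issue that can be absorbed into the induction.
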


Combining Theorem~\ref{7.1} with Proposition~\ref{prop:dec} and Remark~\ref{fibration}, we obtain Theorem~\ref{main}(3) for $w$. 

\subsection{Strategy}
The strategy for proving Theorem~\ref{7.1} is very different from that adopted for the proof of Theorem~\ref{thm:main}. In the latter case, we used the Chen--Zhu conjecture and the dimension formula to determine the end points of the reduction trees of $t^\mu c$. However, such a method, when applied to general $w$, cannot determine the end points.

The approach we use here is as follows. We first apply the partial reduction method and the class polynomials for $t^\mu c$ to calculate the class polynomials for $w$. As we mentioned earlier,  class polynomials, in general, contains less information than  reduction trees. Fortunately, for the elements $w$ we consider here, by combining the information on the class polynomials and the estimates on the type I-edges, we obtain the required information for {\bf any} reduction tree. 




The information about the class polynomials we need is contained in the following equality on the $\s$-cocenter of the Iwahori--Hecke algebra $H$: 
\[\tag{$\diamondsuit$}T_w +[H,H]_{\s} = \sum_{\substack{J\in [J_0(w),J(w)]_{\mu}, \\ [b]\in  B(\bG,\mu)_{J\text{-}\irr}}}(\bq-1)^{\ell_I(w,[b],J)}\bq^{\ell_{II}(w,[b],J) }T_{\CO_{w,[b]}}+[H,H]_{\s},\] where ${\CO_{w,[b]}}$ is the $\s$-conjugacy class containing a $\s-$Coxeter element associated with $[b]_{\bM_{J^{\flat,w}\cup J}}$.

\subsection{Proof of the identity $(\diamondsuit)$}

\subsubsection{}\label{sec:7.3.1} We consider the case where $w=t^\mu c \in {}^{\BS} \tW$ for some partial $\s$-Coxeter element $c$ of $W$. Let $J = \supp_\s(c)$. In this case, $J_0(w) = J(w) = J$. 

If $J = \BS$, that is, $c$ is a (full) $\s$-Coxeter element, then $(\diamondsuit)$ follows from Lemma~\ref{class-poly} and the description of the reduction tree in Theorem~\ref{thm:main}. In other words, the class polynomial $F_{w, \CO}$ is given by
$$F_{w, \CO} =\begin{cases} (\bq-1)^{\ell_I(w,[b],J)}\bq^{\ell_{II}(w,[b],J)} & \text{if } \CO = \CO_{w, [b]} \text{ for some }[b] \in B(\bG, \mu)_{\irr}, \\  0 & \text{otherwise.} \end{cases}$$

Assume $J \subsetneq \BS$. 
It follows from \cite[Theorem 7.3]{HN15} that $F_{w, \CO} = \sum_{\CO^J \subseteq \CO} F_{w, \CO^J}^J$, where $\CO^J$ denotes a $\s$-conjugacy class of $\tW_J$ and $F_{w, \CO^J}^J$ denotes the corresponding class polynomial for $\bM_J$. For any $[b]\in B(\bG,\mu)_{J\text{-}\irr}$, we have $\CO_{w,[b]}\supset \CO_{w,[b]_{\bM_J}}$. Hence $F_{w, \CO_{w,[b]} }=F^J_{w, \CO_{w,[b]_{\bM_J}} } =(\bq-1)^{\ell_I(w,[b],J)}\bq^{\ell_{II}(w,[b],J) }$. And $F_{w, \CO}=0$ if $\CO\ne \CO_{w,[b]}$ for any 
$[b]\in B(\bG, \mu)_{J\text{-}\irr}$. This proves $(\diamondsuit)$.

\subsubsection{}\label{sec:7.3.2} We consider the case where $w=c_1t^{\mu}c_2$ for some partial $\s$-Coxeter elements $c_1,c_2$ of $W$ such that  $t^{\mu}c_2\in{}^{\BS}\tW$ and $c_1\in W_{I(t^{\mu}c_2)}$. 

Set $J_1 = \supp_{\s}(c_1)$ and $J_2 = \supp_{\s}(c_2)$. One can construct a reduction tree $\CT$ of $t^\mu c_2$ in $\tW_{J_2}$ such that $c_1$ commutes with all the vertices in $\CT$. In particular, if $w_1 \lup w_2$ is one edge in $\CT$, then we have $c_1 w_1 \lup c_1 w_2$ in $\tW_{J}$. Note that $c_1$ is a minimal length element in its $\s$-conjugacy class in $\tW_{J_1}$, and the simple reflections in $\tW_{J_1}$ commute with the simple reflections in $\tW_{J_2}$. It is easy to see that if $w_1$ is an end point in $\CT$ (and hence is a minimal length element in its $\s$-conjugacy class in $\tW_{J_2}$ and commutes with $c_1$), then $c_1 w_1$ is a minimal length element in its $\s$-conjugacy class in $\tW_J$. 

Let $\CT'$ be the tree with vertices $c_1 w_1$ for all vertices $w_1 \in \CT$ and the edges $c_1 w_1 \lup c_1 w_2$ for all edges $w_1 \lup w_2$ in $\CT$. Then, from the above discussion, $\CT'$ is a reduction tree of $c_1 t^\mu c_2$ in $\tW_J$. Note that for $J,[b]$ as in Theorem~\ref{7.1}, we have $\ell_I(w,[b],J) = \ell_I(t^{\mu}c_2,[b],J) $ and $\ell_{II}(w,[b]) = \ell_{II}(t^{\mu}c_2,[b])$. Hence the identity $(\diamondsuit)$ for $w$ follows from the identity for $t^{\mu}c_2$ established in \S\ref{sec:7.3.1}.


\subsubsection{} We consider the case where $w=c_1t^{\mu}c_2$ for partial $\s$-Coxeter elements $c_1$ and $c_2$ in $W$ such that $t^{\mu}c_2\in{}^{\BS}\tW$ and $\supp_{\s}(c_1)\cap\supp_{\s}(c_2)=\emptyset$. 

We prove the identity $(\diamondsuit)$ by induction on $\ell(c_1)$. The case $\ell(c_1)=0$ is proved in \S\ref{sec:7.3.1}. 

Assume that $\ell(c_1)>0$. Let $i\in\BS$ such that $s_ic_1<c_1$. There are two cases as follows.

\medskip

Case (1): $c_2\s(s_i)\in{}^{I(\mu)}W$. Write $w_1=s_iw$ and $w_2=s_iw\s(s_i)=s_ic_1t^{\mu}c_2\s(s_i)$. Then $T_w+[H,H]_{\s} = (\bq-1)T_{w_1} +\bq T_{w_2} + [H,H]_{\s}$. Note that $J(w_2)=J(w)$, $J_0(w_1)=J_0(w)$, $J(w_1)=J(w)-\{\s^{\ell}(s_i)\mid\ell\in\BZ\}$, and $J_0(w_2)=J_0(w)\sqcup \{\s^{\ell}(s_i)\mid\ell\in\BZ\}$. 
Then $$[J_0(w),J(w)]_{\mu}=[J_0(w_1),J(w_1)]_{\mu}\sqcup[J_0(w_2),J(w_2)]_{\mu}.$$
By the induction hypothesis, it suffices to prove that 
\begin{enumerate}
\item[(a)] if $J\in[J_0(w_1),J(w_1)]_{\mu} $, then $J^{\flat,w}=J^{\flat,w_1}$; and 
\item[(b)] if $J\in[J_0(w_2),J(w_2)]_{\mu} $, then $J^{\flat,w}=J^{\flat,w_2}$. 
\end{enumerate}
Statement (b) is obvious since $J(w_2)=J(w)$. Let us prove (a). Suppose $J^{\flat,w}\ne J^{\flat,w_1}$; then $i \in I(\mu^{\diamond})$ and $s_{i}$ commutes with $J$. Since $J\supseteq J_0(w)$, $s_{i}$ also commutes with $J_0(w)$. Then $\mu$ is not essentially non-central over $J_0(w_2) = J_0(w)\sqcup \{\s^l(i)\mid l\in\BZ\}$, which is a contradiction. This completes the proof.

\medskip

Case (2): $\s(i)\in I(\mu)$, and $\s(s_i)$ commutes with $c_2$. Then the equality $(\diamondsuit)$ holds for $w$ if and only if it holds for $s_iw\s(s_i)$. We continue with the procedure until Case (1) happens. If Case (2) happens all the time and the procedure does not stop, then $c_1\in W_{I(t^{\mu}c_2)}$, and the equality $(\diamondsuit)$ follows from \S\ref{sec:7.3.2}.

\subsection{Proof of Theorem~\ref{7.1}}
\subsubsection{} We consider the case $w=c t^\mu$, where $c$ is a partial $\s$-Coxeter element of $W$. Let $\CT$ be a reduction tree of $w$. By Lemma~\ref{class-poly} and the identity $(\diamondsuit)$ for $w$, we have that for any $\s$-conjugacy class $\CO$ of $\tW$, 
\begin{multline*}
\sum_{\underline p; \operatorname{end}(\underline p) \in \CO} (\bq-1)^{\ell_I(\underline p)}\bq^{\ell_{II}(\underline p)}\\=\begin{cases} (\bq-1)^{\ell_I(w,[b],J)}\bq^{\ell_{II}(w,[b],J)} & \text{if } \CO=\CO_{w, [b]} \text{ for some } [b], \\ 0 & \text{otherwise}.\end{cases}\end{multline*}

Let $\underline p$ be a path in $\CT$. Set $e=\operatorname{end}(\underline p)$ and $[b] = [b]_{\underline p}$. Assume that $[b]\in B(\bG,\mu)_{J\text{-}\irr}$ for some $J$. As $(\bq-1)^{\ell_I(\underline p)}\bq^{\ell_{II}(\underline p)} \in \BN[\bq-1]$, there is no cancellation involved in the left-hand side of the above equality. Therefore, $e$ must be contained in $\CO_{w,[b]}$. 

As in \S\ref{bound}, we have \begin{align*} \ell_I(\underline p) & \ge \dim V_e -\dim V_w = \sharp(J(w)/\<\s\>) -\sharp(J^{\flat,w}/\<\s\>)-\sharp (I(\nu_b)\cap J /\<\s\>) \\ &=\ell_I(w,[b],J).\end{align*} 
Note that $\ell_I(\underline p)+2 \ell_{II}(\underline p)=\ell_I(w,[b],J)+2 \ell_{II}(w,[b],J)$. Thus $\deg_{\bq} (\bq-1)^{\ell_I(\underline p)}\bq^{\ell_{II}(\underline p)} \ge \deg_{\bq} (\bq-1)^{\ell_I(w,[b],J)}\bq^{\ell_{II}(w,[b],J)}$, with equality holding if and only if $\ell_I(\underline p) = \ell_I(w,[b],J)$. Again, since there is no cancellation involved in $\sum_{\underline p; \operatorname{end}(\underline p) \in \CO} (\bq-1)^{\ell_I(\underline p)}\bq^{\ell_{II}(\underline p)}$, we must have $\ell_I(\underline p) = \ell_I(w,[b],J)$. In this case, $(\bq-1)^{\ell_I(\underline p)}\bq^{\ell_{II}(\underline p)}=(\bq-1)^{\ell_I(w,[b],J)}\bq^{\ell_{II}(w,[b],J)}$. This also shows that for each $\CO=\CO_{w, [b]}$, there is a unique reduction path $\underline p$ with $\operatorname{end}(\underline p) \in \CO$. 

This completes the proof of Theorem~\ref{7.1} for $w=c t^\mu$. 


\subsubsection{}
Now we consider the general case. Let $w=xt^{\mu}y$ with $t^{\mu}y\in{}^{\BS}\tW$. Set $c=\s^{-1}(y)x$ and $w' =ct^{\mu}$. We relate $w$ and $w'$ as in the proof of \cite[Theorem 10.3]{He14}. Let $y=s_1s_2\cdots s_r$ be a reduced expression. Let $w^{(0)} = w'$, $w^{(1)} = s_1w^{(0)}\s(s_1)$, $w^{(2)} = s_2w^{(1)}\s(s_2),\ldots,w^{(r)}=w$. We have $w^{(0)}\rightarrow_{\s} w^{(1)}\rightarrow_{\s}\cdots \rightarrow_{\s}w^{(r)}$. This give a path $w'\lup w$, consisting of $\frac{1}{2}(\ell(w')-\ell(w))$ type II edges. We denote this path by $\underline p_0$. 

Let $\CT$ be a reduction tree of $w$. One may construct a reduction tree $\CT'$ of $w'$ containing  the concatenation $\underline p_0 \circ \CT$ as a subgraph. In particular, for any reduction path $\underline p$ in $\CT$, the concatenation $\underline p':=\underline p_0 \circ \underline p$ is a reduction path in $\CT'$. By definition, $\ell_I(\underline p) =\ell_I(\underline p')$ and $\ell_{II}(\underline p) +\frac{1}{2}(\ell(w')-\ell(w))=\ell_{II}(\underline  p')$. It is obvious that $J(w)=J(w')$ and $J_0(w')=\emptyset$. Hence $\ell_I(w,[b],J) = \ell_I(w',[b],J)$. On the other hand, we have $\sharp(J_0(w)/\<\s\>) = \leng ([b_w],[t^{\mu}]) = \frac{1}{2}(\ell(\eta_{\s}(w)+\ell(y)-\ell(x))) = \frac{1}{2}(\ell(w')-\ell(w))$,  where the second equality follows from the definition of cordial elements. Hence $\ell_{II}(w,[b],J)+ \frac{1}{2}(\ell(w')-\ell(w))= \ell_{II}(w',[b],J)  $. The statements for $\CT$ can now be deduced from the statements for $\CT'$.


\begin{thebibliography}{99}


\bibitem{BS}
B. Bhatt and P. Scholze, \emph{Projectivity of the Witt vector Grassmannian}, Invent. Math. 209 (2017), no. 2, 329--423.



\bibitem{Chai}
C. Chai, \emph{Newton polygon as lattice points}, Amer. J. Math. 122 (2000),
967--990.


\bibitem{DL}
P.~Deligne and G.~Lusztig, \emph{Representations of reductive groups over finite fields}, Ann. of Math. (2) \textbf{103} (1976), no.~1, 103--161. 


\bibitem{GHKR06} 
U.~G\"{o}rtz, T.~Haines, R.~Kottwitz, D.~Reuman, {\em Dimensions of some affine Deligne--Lusztig varieties}, Ann. sci. \'Ecole Norm. Sup. \textbf{39} (2006), 467--511.

\bibitem{GHKR10} U.~G\"{o}rtz, T.~Haines, R.~Kottwitz, D.~Reuman, {\em Affine Deligne--Lusztig varieties in affine flag varieties}, Compos.~Math.~146 (2010), 1339--1382.

\bibitem{GH}
U. G\"ortz and X. He, \emph{Basic loci in Shimura varieties of Coxeter type}, Cambridge J. Math.~3 (2015), no.~3, 323--353.

\bibitem{GHN} U.~G\"ortz, X.~He, S.~Nie, \emph{$P$-alcoves and nonemptiness of affine Deligne--Lusztig varieties}, Ann. Sci. \'Ec Norm. Sup. 48 (2015), 647--665.


\bibitem{GHN-full}
U. G\"ortz, X. He and S. Nie, \emph{Fully Hodge--Newton decomoposable Shimura varieties}, Peking Math. J. 2 (2019), no. 2, 99--154.

\bibitem{GHN2}
U. G\"ortz, X. He and S. Nie, \emph{Basic loci of Coxeter type with arbitrary parahoric level}, arXiv:2006.08838.

\bibitem{Ham15}
P.~Hamacher, \emph{The geometry of Newton strata in the reduction modulo $p$ of Shimura varieties of PEL type},  Duke Math. J. 164 (2015), 2809--2895. 

\bibitem{HV}
P. Hamacher and E. Viehmann, \emph{Irreducible components of minuscule affine Deligne--Lusztig varieties}, Algebra Number Theory 12 (2018), 1611--1634.

\bibitem{He07}
X. He, {\em Minimal length elements in some double cosets of Coxeter groups},  Adv. Math. 215 (2007), 469--503. 

\bibitem{He14}
X. He, \emph{Geometric and homological properties of affine Deligne--Lusztig varieties}, Ann. Math. 179 (2014), 367--404.

\bibitem{He-KR}
X.~He, \emph{Kottwitz-Rapoport conjecture on unions of affine Deligne--Lusztig varieties},  Ann. Sci. \'Ec. Norm. Sup. (4) 49 (2016), no. 5, 1125--1141.

\bibitem{He-CDM}
X. He, \emph{Hecke algebras and $p$-adic groups}, Current developments in mathematics 2015, 73--135, Int. Press, Somerville, MA, 2016.

\bibitem{He-ICM}
X. He, \emph{Some results on affine Deligne--Lusztig varieties}, Proceedings of the International Congress of Mathematicians, Rio de Janeiro 2018. Vol. II. Invited lectures, 1345--1365, World Sci. Publ., Hackensack, NJ, 2018.

\bibitem{He-Pi}
X. He, \emph{Cordial elements and dimensions of affine Deligne--Lusztig varieties}, Forum Math Pi 9 (2021), e9. 

\bibitem{HN}
X. He and S. Nie, \emph{Minimal length elements of extended affine Weyl group}, Compos. Math. 150 (2014), 1903--1927.

\bibitem{HN15} X. He and S. Nie, \emph{$P$-alcoves, parabolic subalgebras and cocenters of affine Hecke }, Selecta Math. (N.S.) 21 (2015), 995--1019.


\bibitem{HN2}
X. He and S. Nie, \emph{On the acceptable elements},  Int. Math. Res. Not. IMRN (2018), no. 3, 907--931.

\bibitem{HZZ}
X. He, R. Zhou and Y. Zhu, \emph{Stabilizers of irreducible components of affine Deligne--Lusztig varieties}, arXiv:2109.02594.


\bibitem{kottwitz-isoI} R.~Kottwitz, \emph{Isocrystals with
additional structure}, Compos.~Math.~56 (1985), 201--220.

\bibitem{kottwitz-isoII} R.~Kottwitz, \emph{Isocrystals with
additional structure. {II}}, Compos.~Math.~{109} (1997), 255--339.


\bibitem{Ko06} R. Kottwitz, \emph{Dimensions of Newton strata in the adjoint quotient of reductive groups},
Pure Appl. Math. Q. 2 (2006), 817--836.

\bibitem{MV}
E. Mili\'cevi\'c and E. Viehmann, \emph{Generic Newton points and the Newton poset in Iwahori double cosets}, Forum Math. Sigma 8 (2020), Paper No. e50, 18 pp.


\bibitem{Nie}
S. Nie, \emph{Irreducible components of affine Deligne--Lusztig varieties}, arXiv:1809.03683. 

\bibitem{Ra} M.~Rapoport, \emph{A guide to the reduction modulo $p$ of Shimura varieties}, Ast\'erisque 298 (2005), 271--318.

\bibitem{Sch}
F. Schremmer, \emph{Generic Newton points and cordial elements}, arXiv:2205.02039. 

\bibitem{Shi}
R. Shimada, \emph{On some simple geometric structure of affine Deligne--Lusztig varieties for $\GL_n$}, arXiv: 220410799.

\bibitem{Vi06} 
E.~Viehmann. {\em The dimension of affine Deligne--Lusztig varieties},  Ann. Sci. \'Ecole Norm. Sup. (4) \textbf{39} (2006), 513--526.

\bibitem{Vi20}
E.~Viehmann, \emph{On the geometry of the Newton stratification}, in: Stabilization of the trace formula, Shimura varieties, and arithmetic applications, Volume II: Shimura varieties and Galois representations, ed.: T. Haines, M. Harris, Cambridge University Press, 2020.

\bibitem{ZZ}
R. Zhou and Y. Zhu, \emph{Twisted orbital integrals and irreducible components of affine Deligne--Lusztig varieties}, Camb. J. Math. 8 (2020), 149–241.

\bibitem{Zhu} X.~Zhu, \emph{Affine Grassmannians and the geometric Satake in mixed characteristic}, Ann.~Math.~185 (2017), Issue 2, 403--492.

\end{thebibliography}
\end{document}